\newtheorem{thm}{Theorem}[section]
\newtheorem*{thm*}{Theorem}
\newtheorem{prop}[thm]{Proposition}
\newtheorem{lem}[thm]{Lemma}
\newtheorem{conj}[thm]{Conjecture}
\theoremstyle{definition}
\newtheorem{defn}[thm]{Definition}
\newtheorem{exmp}[thm]{Example}
\newtheorem{rems}[thm]{Remarks}
\numberwithin{equation}{section}
\DeclareRobustCommand{\SkipTocEntry}[5]{} 
\def\@seccntformat#1{%
  \protect\textup{\protect\@secnumfont
    \ifnum\pdfstrcmp{subsection}{#1}=0 \bfseries\fi
    \csname the#1\endcsname
    \protect\@secnumpunct
  }%
}  
\renewcommand{\phi}{\varphi}
\renewcommand{\subset}{\subseteq}
\renewcommand{\supset}{\supseteq}
\DeclareMathOperator{\SL}{{\rm SL}}
\DeclareMathOperator{\id}{{\rm id}}
\let\Im\relax
\DeclareMathOperator{\Im}{{\rm Im}}
\DeclareMathOperator{\PSL}{{\rm PSL}}
\DeclareMathOperator{\Aut}{{\rm Aut}}
\DeclareMathOperator{\Mod}{{\rm Mod}}
\DeclareMathOperator{\PMod}{{\rm PMod}}
\DeclareMathOperator{\Teich}{{\rm Teich}}
\DeclareMathOperator{\Homeo}{{\rm Homeo}}
\DeclareMathOperator{\Deck}{{\rm Deck}}
\newcommand{\bbD}{\mathbb{D}}
\newcommand{\bbR}{\mathbb{R}}
\newcommand{\bbQ}{\mathbb{Q}}
\newcommand{\bbZ}{\mathbb{Z}}
\newcommand{\bbN}{\mathbb{N}}
\newcommand{\bbC}{\mathbb{C}}
\newcommand{\bbH}{\mathbb{H}}
\newcommand{\hQ}{\widehat{\mathbb{Q}}}
\newcommand{\hR}{\widehat{\mathbb{R}}}
\newcommand{\hC}{\widehat{\mathbb{C}}}
\newcommand{\tphi}{\widetilde{\phi}}
\newcommand{\tpsi}{\widetilde{\psi}}
\newcommand{\tgamma}{\widetilde{\gamma}}
\newcommand{\talpha}{\widetilde{\alpha}}
\newcommand{\tbeta}{\widetilde{\beta}}
\newcommand{\tH}{\widetilde{H}}
\newcommand{\tK}{\widetilde{K}}
\newcommand{\tL}{\widetilde{L}}
\newcommand{\tl}{\tilde{l}}
\newcommand{\trho}{{\widetilde{\rho}}}
\newcommand{\calA}{\mathcal{A}}
\newcommand{\calC}{\mathcal{C}}
\newcommand{\calJ}{\mathcal{J}}
\newcommand{\calF}{\mathcal{F}}
\newcommand{\calT}{\mathcal{T}}
\newcommand{\calM}{\mathcal{M}}
\newcommand{\calW}{\mathcal{W}}
\newcommand{\rel}{\,\,{\rm rel.}\,}
\newcommand{\from}{\colon}
\newcommand{\dto}{\rightrightarrows}
\renewcommand{\emptyset}{\varnothing}
\DeclarePairedDelimiter{\norm}{\lVert}{\rVert} 
\title{Curve Attractors for Marked Rational Maps}
\author{Zachary Smith}
\address{Department of Mathematics, University of California, Los Angeles, CA 90095}
\email{zbsmith@math.ucla.edu}
\date{\today}
\subjclass[2020]{Primary 37F20; Secondary 37F10, 37F34.}
\keywords{Thurston maps, curve attractor, Thurston pullback map.}
\thanks{The author was partially supported by NSF grant DMS-2054987.}
   \def\MR#1{}
\begin{document}

\maketitle

\begin{abstract}
A Thurston map $f\from (S^2, A) \to (S^2, A)$ with marking set $A$ induces a pullback relation on isotopy classes of Jordan curves in $(S^2, A)$. If every curve lands in a finite list of possible curve classes after iterating this pullback relation, then the pair $(f,A)$ is said to have a {\em finite global curve attractor}. It is conjectured by Pilgrim \cite{pilgrimsurvey} that all rational Thurston maps that are not flexible Latt\`{e}s maps have a finite global curve attractor. We present partial progress on this problem. Specifically, we prove that if $A$ has four points and the postcritical set (which is a subset of $A$) has two or three points, then $(f,A)$ has a finite global curve attractor.

We also discuss extensions of the main result to certain special cases where $f$ has four postcritical points and $A=P_f$. Additionally, we speculate on how some of these ideas might be used in the more general case.

\end{abstract}

\tableofcontents

\section{Introduction}

This paper is concerned with dynamical questions which arise in the context of Thurston maps. A {\em Thurston map} is an orientation-preserving branched cover $f\from S^2\to S^2$ that is not a homeomorphism and such that each of its critical points has finite forward orbit; it is considered together with the data of a finite set of marked points $A$ that contains said forward orbits. These maps were introduced by William Thurston as combinatorial models of postcritically-finite rational maps. In 1982 Thurston presented his celebrated characterization theorem, which gives necessary and sufficient conditions for when a Thurston map is “realized” by a rational map in a suitable sense. The proof, as explicated by Douady and Hubbard \cite{dhthurston}, uses an analytic map on Teichm\"{u}ller space $\sigma_f\from\calT_A\to\calT_A$ which is induced by $f$ by pulling back complex structures. The question of whether $f$ is realized by a rational map then reduces to whether $\sigma_f$ has a fixed point in $\calT_A$.

Interestingly, the main requirement in Thurston's theorem is the (non)existence of Jordan curves in $S^2\setminus A$ with certain invariance properties. More generally, every Thurston map induces a pullback relation on isotopy classes of Jordan curves in $S^2\setminus A$. The restriction $f:S^2\setminus f^{-1}(A)\to S^2\setminus A$ is a covering map, so if $\gamma\subset S^2\setminus A$ is a Jordan curve, then a component $\widetilde{\gamma}$ of $f^{-1}(\gamma)$ will also be a Jordan curve in $S^2\setminus A$. We say that $\widetilde{\gamma}$ is a {\em pullback} of $\gamma$ by $f$. Lifting isotopies shows that the set of isotopy classes of $f^{-1}(\gamma)\rel A$ depends only on the isotopy class of $\gamma\rel A$.

With this in mind, understanding the dynamics of Thurston maps $f$, finding suitable invariants by which to classify such maps, and understanding the dynamics of the associated pullback map $\sigma_f$ are all closely related to the curve pullback relation described above. See, for example, \cite{bekp},\cite{selthesis}, \cite{SarahCritEndo}, \cite{KPSlifts}, and \cite{pilgrimsurvey}.

One of the major open problems in the study of the pullback relation on curves is the following (see \cite{lodgethesis} and \cite{pilgrimsurvey}):

\begin{conj}[Global Curve Attractor Conjecture]
Let $f\from S^2\to S^2$ be a Thurston map with hyperbolic orbifold that is realized by a rational map. Then there is a finite set $\calA(f)$ of Jordan curves in $S^2\setminus A$ with the following property: for every Jordan curve $\gamma$ in $S^2\setminus A$ there is a positive integer $N(\gamma)$ such that, for $n\geq N(\gamma)$, all pullbacks $\widetilde{\gamma}$ of $\gamma$ under $f^n$ are contained in $\calA(f)$ up to isotopy $\rel A$.
\end{conj}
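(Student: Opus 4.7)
The plan is to exploit the Thurston pullback map $\sigma_f\from\calT_A\to\calT_A$. Since $f$ is realized by a rational map, Thurston's characterization theorem provides a fixed point $\tau^*\in\calT_A$; since the orbifold is hyperbolic, $\sigma_f$ is strictly contracting in the Teichm\"uller metric, so $\sigma_f^n(\tau)\to\tau^*$ for every $\tau\in\calT_A$. I would use this contraction to transfer the combinatorial/topological question about curve pullbacks into a geometric question about extremal lengths, where compactness arguments become available.

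Concretely, first I would set up the dictionary between curve pullback and the $\sigma_f$-dynamics: for a Jordan curve $\gamma\subset S^2\setminus A$, a component $\widetilde{\gamma}\subset f^{-1}(\gamma)$, and any $\tau\in\calT_A$, the extremal length satisfies an inequality of the form $\ell_\tau(\widetilde{\gamma})\leq d\cdot \ell_{\sigma_f(\tau)}(\gamma)$, where $d$ is the degree of $f\vert_{\widetilde{\gamma}}$. Iterating, the $n$-th pullback $\widetilde{\gamma}_n$ of a fixed curve $\gamma_0$ has extremal length, measured at $\tau^*$, controlled by $\ell_{\sigma_f^n(\tau^*)}(\gamma_0)$ together with a product of local degrees along the iterated pullback. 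If the product of degrees can be kept bounded, then the extremal length of $\widetilde{\gamma}_n$ in the fixed Riemann surface $(S^2\setminus A,\tau^*)$ is uniformly bounded.

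Second, I would apply the standard fact that only finitely many isotopy classes of essential simple closed curves in a fixed finite-area hyperbolic surface have extremal length below a given threshold; this would produce the desired finite set $\calA(f)$ and a uniform $N$ such that for $n\geq N$ every pullback class lies in $\calA(f)$. For peripheral or inessential curves, the pullback relation is trivial and can be absorbed into the bookkeeping.

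The main obstacle is the degree factor $d$ in the extremal-length inequality. Under iteration the accumulated factor $d_1 d_2\cdots d_n$ could in principle dominate the contraction of $\sigma_f^n(\tau^*)$ toward $\tau^*$, causing the extremal length to blow up and invalidating the compactness step. Overcoming this seems to require either (i) an improved extremal-length inequality tailored to simple closed curves that beats the naive degree bound, or (ii) a direct analysis of the algebraic pullback correspondence on the curve complex showing that high-degree pullbacks of a given curve become peripheral/inessential and drop out. In special cases where the Teichm\"uller space is low-dimensional---most notably $\lvert A\rvert=4$, where $\calT_A\cong\mathbb{H}$ and curves correspond to slopes in $\hQ$---one can hope to replace this difficulty with an explicit study of the induced correspondence on $\hQ$, which is presumably how the paper proceeds.
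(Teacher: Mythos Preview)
The statement you are trying to prove is stated in the paper as a \emph{conjecture}, not a theorem; the paper does not prove it in general and explicitly says the problem remains open. What the paper proves are the special cases of Theorems~\ref{mainthm} and~\ref{mainthm2} (four marked points with at most three postcritical points, and certain statically reducible four-postcritical-point maps). So there is no ``paper's own proof'' of the full statement to compare against.

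Your outline has the genuine gap you yourself identify: the accumulated degree factor $d_1\cdots d_n$ in the extremal-length inequality can overwhelm the Teichm\"uller contraction of $\sigma_f$, and neither of your proposed fixes (i) or (ii) is known to work in general. This is not a technicality---it is essentially the reason the conjecture is open. The ``finitely many curves of bounded extremal length'' step is fine, but getting a uniform bound on the extremal lengths of iterated pullbacks is exactly the hard part.

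For the special case the paper does prove, the argument is rather different from your extremal-length sketch. The key structural input is that when $|A|=4$ and $P_f\subset\{0,1,\infty\}$ with $f(a)=a$, the pullback $\sigma_f$ covers a genuine \emph{map} on moduli space---namely $f$ itself---rather than a multivalued correspondence (Theorem~\ref{maindiagram}). The paper then abandons the Teichm\"uller metric in favor of the pullback to $\bbH$ of an \emph{orbifold metric} on $\hC\setminus P_\calF$, with respect to which $\sigma_f$ is uniformly contracting on a truncated region $X$ obtained by deleting horoballs at the Fatou cusps. Rational cusps are split into Fatou and Julia types (Lemma~\ref{cuspsorting}); Julia cusps are shown to become peripheral because $\sigma_f$ contracts them toward the fixed point, while Fatou cusps are handled by a ``leashing'' argument: the distance from the fixed point to the boundary horocycle $\partial B_t(\sigma_f^n(r))$ satisfies a recursion $x_n\le \alpha x_{n-1}+C$ (Lemma~\ref{keyineqfatou}), forcing the orbit into a compact set that meets only finitely many horoballs. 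The finiteness of possible Thurston multiplier values is what produces the additive constant $C$, and this replaces any need to control products of degrees.
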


Stated less formally, if one iterates the pullback relation on curves, then one eventually lands in a finite set of isotopy classes. This problem appears to be quite difficult, as even in the simplest case of four marked points (so $|A|=4$) there are relatively few classes of rational maps for which it is known.

\subsection{Main Results}

In this paper we will verify the curve attractor conjecture in the following special case:

\begin{restatable}{thm}{resmainthm}
\label{mainthm}
Let $f\from\hC\to\hC$ be a rational Thurston map with a set $A$ of four marked points. If the postcritical set $P_f\subset A$ has at most three points, then $(f,A)$ has a finite global curve attractor.
\end{restatable}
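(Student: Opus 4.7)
The plan is to study the Thurston pullback map $\sigma_f:\calT_A\to\calT_A$, which (since $|A|=4$) is a holomorphic self-map of $\bbH$, and to use the hypothesis $|P_f|\leq 3$ to reduce its cusp dynamics to a finite-state problem. Because $(f,A)$ is realized by a rational map and every flexible Latt\`es map has $|P_f|=4$, our hypothesis guarantees that $\sigma_f$ is a strict Poincar\'{e} contraction with a unique attracting fixed point $\tau_0\in\bbH$. By Selinger's extension theorem \cite{selthesis}, $\sigma_f$ extends continuously to the augmented Teichm\"{u}ller space $\bbH\cup\hQ$, and cusps in $\hQ$ are in bijection with isotopy classes of essential simple closed curves in $(S^2,A)$. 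Crucially, since the complexity of the four-punctured sphere is $1$, every multicurve has a single component; hence the essential pullback of any curve is a single isotopy class (or empty), and iterated essential pullbacks correspond exactly to forward $\sigma_f$-orbits of cusps. Non-essential pullbacks are always peripheral or trivial and contribute only finitely many classes, so the theorem reduces to showing that every forward cusp orbit of $\sigma_f$ is eventually contained in a finite set.

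Normalize three of the four marked points of $A$ to $\{0,1,\infty\}$: all of $P_f$ when $|P_f|=3$, or both points of $P_f$ together with one appropriately chosen extra marked point when $|P_f|=2$. Under this normalization $\calM_A\cong\hC\setminus\{0,1,\infty\}$, its three cusps correspond to (a subset of) $P_f$, and $\sigma_f$ descends to a multi-valued correspondence $F$ on $\calM_A$ whose branches are preimages, under the fixed rational model of $f$, of the moving marked point. Extending $F$ across the cusps of $\calM_A$ by $F(p)=f^{-1}(p)$ for $p\in\{0,1,\infty\}$, a cusp $c\in\hQ$ projecting to $p$ is sent by $\sigma_f$ to a cusp if and only if the corresponding branch of $F$ at $p$ lands in $\{0,1,\infty\}\subseteq P_f$; otherwise it lands in the interior of $\bbH$. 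Consequently, any cusp orbit staying on cusps projects to a backward orbit inside the finite set $P_f$ under $g:=f|_{P_f}$, and this backward orbit is eventually periodic of some period $N\leq|P_f|$.

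To promote periodicity of the projection to eventual periodicity of the cusp orbit itself, consider the return map $\sigma_f^N$, which is also a holomorphic self-map of $\bbH$ with the same attracting fixed point $\tau_0$. In cusp-local coordinates at a cusp $c$ in the periodic cycle (both $c$ and $\sigma_f^N(c)$ sent to $\infty\in\bbH$ by M\"{o}bius transformations), $\sigma_f^N$ takes the form $z\mapsto z/K+C$, where $K\geq 1$ is the product of local degrees of $f$ around the cycle and $C\in\bbR$. When $K\geq 2$, $\sigma_f^N$ is a strict local contraction at the cusp, forcing the cusp orbit to stabilize in a finite attracting set; when $K=1$, $\sigma_f^N$ is a translation in the cusp chart, and one must rule out indefinite drift by combining the cusp-local translation structure with the Denjoy--Wolff convergence of all interior orbits to $\tau_0$. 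In either case the cusp orbit is eventually finite, and together with the finitely many non-essential classes this yields the finite global curve attractor $\calA(f)$. The main obstacle will be the $K=1$ case: ruling out that a cusp-local translation can send the orbit through infinitely many cusps requires a careful interplay between the cusp-local affine structure of $\sigma_f^N$ and the global Poincar\'{e} contraction of $\sigma_f$ toward $\tau_0\in\bbH$.
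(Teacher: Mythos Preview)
Your reduction to cusp dynamics and the dichotomy between cycles in $P_f$ with $K\geq 2$ (Fatou-type) versus $K=1$ (Julia-type) is exactly right, and matches the paper's decomposition into Fatou cusps and Julia cusps. However, the proposal leaves the heart of the matter unproved. For $K=1$ you explicitly say one ``must rule out indefinite drift'' via Denjoy--Wolff, but Denjoy--Wolff governs interior orbits, not boundary orbits: a cusp orbit that stays on $\hQ$ never enters $\bbH$, so convergence of interior orbits to $\tau_0$ gives you nothing directly. The paper resolves this case by a completely different mechanism: it pulls back an \emph{orbifold metric} $\rho$ on $\hC\setminus P_\calF$ (not the hyperbolic metric) to a metric $\trho$ on $\bbH$ with respect to which $\sigma_f$ is a uniform contraction on a suitable truncated region, and crucially the Julia cusps lie at \emph{finite} $\trho$-distance from $\tau_0$. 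One then shows $d_{\trho}(\tau_0,\sigma_f^n(r))\to 0$ while any Julia cusp is bounded away from $\tau_0$, forcing the orbit to eventually leave $\hQ$ entirely (every Julia cusp becomes peripheral). Your proposal has no analogue of this metric and no mechanism to force the orbit off the boundary.

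Even your $K\geq 2$ sketch has a gap. Writing $\sigma_f^N$ as $z\mapsto z/K+C$ after conjugating source and target cusps separately to $\infty$ is fine, but $C$ depends on \emph{which} cusps $c$ and $\sigma_f^N(c)$ are (there are infinitely many over each point of $P_f$), and you have not bounded these constants uniformly. The paper's ``leashing'' argument handles this: working in the path-metric $d_{X^*}$ induced by $\trho$ on the complement of the Fatou horoballs $B_t(r)$, one proves the key inequality $d(\tau_0,\partial B_t(\sigma_f(r)))\leq \alpha\, d(\tau_0,\partial B_t(r))+C$ with $\alpha<1$ and a \emph{single} constant $C$ (coming from the finiteness of the set of Thurston multiplier values). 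Iterating this recursion traps the horoball boundaries in a compact set, which meets only finitely many horoballs by local finiteness. Both the orbifold metric and the uniform additive constant are essential ideas missing from your outline.
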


There are two main ingredients to the proof. The first is the observation that the pullback map on Teichm\"{u}ller space $\sigma_{f,A}$ covers a {\em map} on moduli space, and this map will essentially be the same as $f$ itself. This allows one to explicitly relate the dynamics of $\sigma_f$ to the dynamics of $f$. The correspondence we will use has been studied before by Kelsey and Lodge in \cite{kelseylodgequadratic}, where they prove all rational quadratic Thurston maps with at most four postcritical points have a finite global curve attractor. Their technique was to enumerate all possible combinatorial classes of such maps and then calculate the virtual endomorphism for each. We will take a very different tack in our argument, which will circumvent the need for the virtual endomorphism entirely and hence scale better with degree.

This brings us to the second main ingredient of our proof, which is the ``leashing'' argument. For four marked points the pullback relation on curves is encoded in the boundary of behavior of $\sigma_f$ at rational cusps $\hQ = \bbQ\cup\{\infty\}$, so it suffices to analyze the dynamics of these cusps under $\sigma_f$. We will endow a certain $\sigma_f$-invariant subset $X$ of of Teichm\"{u}ller space with a path-metric obtained by pulling back an orbifold metric on $\hC\setminus P_\calF\supset \hC\setminus P_f$, and $\sigma_f$ will be uniformly contracting with respect to this metric. The subset $X$ will be a {\em truncated} hyperbolic space, meaning that it is the complement of a disjoint collection of horoballs. These horoballs will all be based at rational cusps of Teichm\"{u}ller space identified with $\bbH$. From there, we attach a ``leash'' from the fixed point in Teichm\"{u}ller space to these complementary horoballs, and using that $\sigma_f$ is contractive on $X$, ``tighten'' this leash by iterating $\sigma_f$. There will only be finitely many horoballs in the family that we may land on after this procedure, and the base cusps of these horoballs will exactly be the desired attractor on cusps.

The main theorem also extends to rational Thurston maps with four postcritical points that are obtained by ``twisting'' a rational map with fewer postcritical points by a M\"{o}bius transformation. As shown in \cite{kelseylodgequadratic}, such maps are characterized by a certain condition on their static portrait. In particular, we will prove

\begin{restatable}{thm}{resmainthmtwo}
\label{mainthm2}
Let $f\from\hC\to\hC$ be a rational Thurston map with four postcritical points. If one of the postcritical points of $f$ is statically trivial, then $(f,P_f)$ has a finite global curve attractor.
\end{restatable}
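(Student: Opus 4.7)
The plan is to reduce Theorem \ref{mainthm2} to Theorem \ref{mainthm} using the Kelsey--Lodge characterization of rational maps with a statically trivial postcritical point. Such an $f$ admits a Möbius twist decomposition $f = M \circ g$, where $g$ is a rational Thurston map with postcritical set $P_g \subsetneq P_f$ satisfying $|P_g| \leq 3$ and $M$ is a Möbius transformation. Regarding $g$ as a Thurston map with the augmented marking $A := P_f$ (four points, but at most three of which are postcritical), Theorem \ref{mainthm} directly yields a finite global curve attractor for the pair $(g, A)$.

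To transfer this attractor to $(f, P_f)$, I would analyze the relationship between the Teichmüller pullback maps of $f$ and $g$. Because $M$ is a Möbius transformation permuting $P_f$, it induces a finite-order isometry $\widetilde{M}$ of $\calT_{P_f} \cong \bbH^2$, and the contravariant identity $f^* = g^* \circ M^*$ yields a factorization of the form $\sigma_{f, P_f} = \sigma_{g, P_f} \circ \widetilde{M}$. This decomposition preserves the two essential ingredients from the proof of Theorem \ref{mainthm}: first, $\sigma_{g, P_f}$ covers a map on moduli space that is essentially $g$ itself; and second, post-composition with the isometry $\widetilde{M}$ preserves uniform contraction of $\sigma_{g, P_f}$ on the truncated hyperbolic space $X$ equipped with the pullback orbifold metric.

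With contraction of $\sigma_{f, P_f}$ in hand, I would rerun the leashing argument. Attaching a leash from a fixed point of $\sigma_{f, P_f}$ to an arbitrary complementary horoball and iterating contracts the leash at a uniform rate, so that after boundedly many steps the leash can only terminate in one of finitely many horoballs in the collection. Because $\widetilde{M}$ permutes the horoball family (possibly enlarged to a $\widetilde{M}$-orbit), the bases of these finitely many horoballs form a finite set of rational cusps of $\bbH^2$, which correspond to the desired global curve attractor of isotopy classes in $(S^2, P_f)$.

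The main obstacle I expect is ensuring that the horoball system used in the main theorem can be chosen $\widetilde{M}$-equivariantly without breaking the contraction estimates. The horoballs are based at rational cusps corresponding to separating isotopy classes of curves in $(S^2, P_f)$, and $M$ permutes these classes through its induced action on $P_f$. Since this permutation is finite-order and determined by explicit combinatorial data, one should be able to equalize horoball radii along $\widetilde{M}$-orbits, preserving the uniform contraction on the truncated hyperbolic space. A secondary subtlety is that the factorization $\sigma_{f, P_f} = \sigma_{g, P_f} \circ \widetilde{M}$ hinges on identifying the marking conventions used for $g$ (with $A = P_f$) with those of the original pullback; tracking this correspondence through the moduli-space map is the bookkeeping step that must be executed carefully to complete the reduction.
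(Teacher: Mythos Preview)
Your decomposition step matches the paper exactly: write $f = M \circ g$ via Proposition~\ref{reduciblecriterion}, then invoke Theorem~\ref{mainthm} for $(g,A)$ with $A = P_f$. Where you diverge is in the transfer step, and by working too hard there you both miss a simplification and introduce an unjustified claim.

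The observation you are missing is that the specific M\"{o}bius transformation $M$ produced by the Kelsey--Lodge reduction always permutes $A$ as a product of two disjoint transpositions, and for such $M$ the pullback $\sigma_M$ is \emph{literally the identity} on $\calT_A$, not merely a finite-order isometry. One way to see this: a direct cross-ratio computation shows that a double transposition of $A$ induces the identity on moduli space $\calM_A$, so $\sigma_M$ is a deck transformation of $\pi_A\from\calT_A\to\calM_A$; since $\sigma_M$ has finite order and the deck group $\PMod(S_{0,4})\cong F_2$ is torsion-free, $\sigma_M = \id$. The composition law then gives $\sigma_{f} = \sigma_g \circ \sigma_M = \sigma_g$, so the finite cusp attractor for $(g,A)$ is already a finite cusp attractor for $(f,P_f)$. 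This is Proposition~\ref{compattractor}, and it is the entire content of the paper's proof---no rerun of the leashing argument is needed.

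Your route, as written, has a gap precisely at the sentence ``post-composition with the isometry $\widetilde{M}$ preserves uniform contraction of $\sigma_{g,P_f}$ on $X$ equipped with the pullback orbifold metric.'' The map $\widetilde{M}$ is an isometry for the hyperbolic (Teichm\"{u}ller) metric, but the contraction of $\sigma_g$ is measured in the orbifold metric $\trho$, which is built from $g$'s ramification data on $\hC\setminus P_\calF$ and has no reason to be $\widetilde{M}$-invariant. Averaging $\trho$ over the $\widetilde{M}$-orbit does not obviously preserve the contraction estimate either, since $\sigma_g$ need not contract the conjugated metrics $(\sigma_M^j)^*\trho$. All of this evaporates once you know $\widetilde{M}=\id$, so the fix is simply to record that fact rather than to build an equivariant horoball system.
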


We delay the precise definition of static triviality until Section \ref{extensionsection}.

Finally, we remark that the extremal cases of Theorem \ref{mainthm} (i.e., when $f$ is either hyperbolic or expanding) can be derived as a corollary to a result concerning the general multivalued correspondence on moduli space (see \cite[Theorem 7.2]{KPSlifts}). This result is not able to handle cases where there are a mixture of attractive and repelling dynamics on moduli space though, which is one of the major features of our theorem. It is possible that the construction of a suitable analog of the orbifold metric and leashing trick used in our proof can be implemented in the general case. We discuss this idea further at the end of the paper.

\subsection{Outline of Paper}

The paper is organized as follows. In Section \ref{preliminarysection} we review the essential definitions and background material.

In Section \ref{modmapsection} we establish the fundamental commutative diagram; a map $(f,A)$ with four marked points $A = \{0,1,\infty,a\}$ and three postcritical points $P_f = \{0,1,\infty\}$ either has constant Thurston pullback map, or else covers a map on moduli space. We give a reasonably thorough proof of this correspondence.

In the first half of Section \ref{gcasection} we introduce some additional tools. Specifically, we introduce a preferred family of horoballs based at rational cusps and describe the images of these horoballs under deck transformations of the universal covering map $\pi\from\bbH\to\hC\setminus\{0,1,\infty\}$, as well as their images under the Thurston pullback map. We divide rationals cusps into two groups---roughly speaking, those with attracting dynamics and those with repelling dynamics. We also design a metric (induced by a suitably chosen orbifold metric) with respect to which $\sigma_f$ will satisfy nice contractive properties.

In the second half of Section \ref{gcasection} we prove the main theorem. The proof largely consists of combining the tools above into the previously described ``leashing'' argument. We also provide several examples of marked rational Thurston maps with nontrivial pullbacks that satisfy the hypotheses of Theorem \ref{mainthm}, and thus have finite global curve attractors.

In Section \ref{extensionsection}, we describe how to produce further examples of rational Thurston maps with four postcritical points by composing maps satisfying Theorem \ref{mainthm} with either M\"{o}bius transformations or flexible Latt\`{e}s maps, and give examples for both cases. Examples obtained from the former construction can be detected by a simple condition on the portrait of the Thurston map (see \cite[Proposition 2.5]{kelseylodgequadratic}), which proves Theorem \ref{mainthm2}.

Finally, in Section \ref{futuresection} we describe how some of the ideas of this paper might be used to approach the still-open general case of the curve attractor problem. We also pose some questions.

\subsection*{Acknowledgments}\addtocontents{toc}{\SkipTocEntry} 
I would like to thank Mario Bonk, my PhD advisor, for both suggesting this topic and for his constant support and guidance. I would also like to thank Mikhail Hlushchanka and Kevin Pilgrim for various helpful comments and remarks.

\section{Preliminaries}\label{preliminarysection}

\subsection{Notations and Conventions}

Let $S^2$ denote a topological $2$-sphere, which we will always take to be oriented. When we wish to work with explicit coordinates, we will take the Riemann sphere as the underlying conformal structure and write $S^2=\hC$.

The notation $\bbH$ will be used for the hyperbolic plane $\bbH = \{\tau\in\bbC: \Im \tau>0\}$.

For a given set $A$, we denote the cardinality of $A$ by $|A|$.

Let $X$ and $Y$ be topological spaces. A continuous map $H\from X\times[0,1]\to Y$ is called an {\em isotopy} if $H_t:=H(\cdot,t)\from X\to Y$ is a homeomorphism for each $t\in [0,1]$. The map $H_t$ is called the {\em time-t} map of the isotopy. Given a distinguished subset $A\subset X$, the map $H$ is said to be an {\em isotopy relative to $A$} if $H_t|_A = H_0|_A$ for each $t\in [0,1]$. Two homeomorphisms $h_0,h_1\from X\to Y$ are called {\em isotopic} ({\em relative to $A$}) if there exists an isotopy (relative to $A$) $H\from X\times [0,1]\to Y$ with $H_0 = h_0$ and $H_1=h_1$. If $h_0$ and $h_1$ are isotopic relative to $A$, we will write $h_0\simeq h_1\rel A$.

\subsection{Basic Definitions}

In this section we give a summary of some basic aspects of Thurston theory. We omit proofs of the stated propositions, but they can be found in any standard reference on the subject (e.g., \cite{dhthurston} or \cite{markedthurston}).

Let $f\colon S^2\to S^2$ be an orientation-preserving branched covering map. We denote its degree by $\deg(f)$. A point $c\in S^2$ where the local mapping degree $\deg(f,c)$ is at least $2$ is a {\em critical point}, and we denote the set of critical points by $C_f$. A {\em postcritical point} of $f$ is a point $p\in S^2$ of the form $p = f^n(c)$ where $c$ is a critical point of $f$ and $f^n$ denotes the $n$th iterate of $f$ with $n$ a nonnegative integer. We denote the set of all postcritical points $P_f$, so
\[P_f = \bigcup_{n\geq 1}\{f^n(c): c\in C_f\}.\]
If $|P_f|$ is finite, then $f$ is said to be {\em postcritically-finite}.

\begin{defn} Let $f\from S^2\to S^2$ be an orientation-preserving postcritically-finite branched covering map with $\deg(f)\geq 2$. Let $A\subset S^2$ be a finite set of marked points with the properties $P_f\subset A$ and $f(A)\subset A$. We call the map of pairs $f\from (S^2,A)\to (S^2,A)$ a {\em Thurston map}.\end{defn}

\begin{rems}
\,
\begin{enumerate}[leftmargin=2em]
\item In the above definition it may be the case that $P_f$ is a proper subset of $A$. This slightly extends the standard definition of Thurston maps, where $A=P_f$. We will sometimes use the phrase {\em marked Thurston map} to emphasize the more general setting. We refer the reader to \cite{markedthurston} for a general overview of Thurston theory with markings and how it differs from the original setting of \cite{dhthurston}.
\item The $(\cdot,A)$ portion of the notation will be suppressed when the context is clear.
\item Conversely, when we wish to emphasize that a map $f$ is being considered relative to a particular set of marked points, we may write $(f,A)$.
\item We will assume throughout our discussions that $|A|\geq 3$.
\end{enumerate}
\end{rems}

\begin{defn}
For a Thurston map $(f,A)$, the {\em dynamical portrait} of $f$ is a directed graph with vertex set $C_f\cup A$, where a vertex $z$ is connected to a vertex $w$ by a directed edge when $f(z)=w$. The directed edges carry weighting $\deg(f,z)$.

The {\em static portrait}, meanwhile, is a bipartite directed graph where the vertex set is the disjoint union of $C_f\cup A$ and $A$, where directed edges originate from vertices in the first set and terminate in the latter, and we again weight an edge connecting $z\in C_f\cup A$ to $w=f(z)\in A$ by $\deg(f,z)$.
\end{defn}

\begin{defn}
Two marked Thurston maps $(f,A)$ and $(g,A')$ are said to be {\em Thurston equivalent} or {\em combinatorially equivalent} if there exist orientation-preserving homeomorphisms $h_0,h_1\from (S^2,A)\to (S^2,A')$ such that the diagram

\[\begin{tikzcd}
	{(S^2,A)} && {(S^2,A')} \\
	\\
	{(S^2,A)} && {(S^2,A')}
	\arrow["f"', from=1-1, to=3-1]
	\arrow["{h_1}", from=1-1, to=1-3]
	\arrow["{h_0}"', from=3-1, to=3-3]
	\arrow["g", from=1-3, to=3-3]
\end{tikzcd}\]
commutes, and also $h_0\simeq h_1 \rel A$.
\end{defn}

\begin{defn}\label{teichdef}
The {\em Teichm\"{u}ller space} of a marked sphere $(S^2,A)$, denoted $\calT_A$ or $\Teich(S^2,A)$, is the space of equivalence classes of orientation-preserving homeomorphisms $\phi\from S^2\to \hC$ where two such homeomorphisms have $\phi_0\sim\phi_1$ if and only if there is a M\"{o}bius transformation $M\from\hC\to\hC$ such that $\phi_1\simeq M\circ \phi_0 \rel A$ and also the diagram

\[\begin{tikzcd}
	& \hC \\
	{(S^2,A)} \\
	& \hC
	\arrow["{\phi_0}", from=2-1, to=1-2]
	\arrow["M", from=1-2, to=3-2]
	\arrow["{\phi_1}"', from=2-1, to=3-2]
\end{tikzcd}\]
commutes on $A$. The maps $\phi$ are sometimes called {\em markings} or {\em marking homeomorphisms}.
\end{defn}

\begin{defn}\label{modulispacedef}
The {\em moduli space} of a marked sphere $(S^2,A)$, denoted $\calM_A$, is the space of equivalence classes of injections $\iota\from A\hookrightarrow \hC$ where $\iota_0\sim \iota_1$ if and only if there is a M\"{o}bius transformation $M\from\hC\to\hC$ such that $\iota_1= M\circ \iota_0$.
\end{defn}

There is an natural projection map $\pi\from \calT_A\to \calM_A$  defined by $\pi([\phi]) = [\phi|_A]$. This map is holomorphic with respect to the complex structure on $\calT_A$ and is also a universal covering map.

To each Thurston map $f\from (S^2,A)\to (S^2,A)$ there is an associated pullback map on Teichm\"{u}ller $\sigma_{f,A}\from \calT_A\to\calT_A$. To construct this map, we first note the following proposition:

\begin{prop}\label{pullbackdefdiagram}
Let $(f,A)$ be a Thurston map, and let $\phi\from (S^2, A)\to(\hC,\phi(A))$ be an orientation-preserving homeomorphism. There is an orientation-preserving homeomorphism $\psi\from (S^2,A)\to (\hC,\psi(A))$ and a rational map $F\from (\hC,\psi(A))\to (\hC,\phi(A))$ so that the following diagram commutes:

\[\begin{tikzcd}
	{(S^2,A)} && {(\hC,\psi(A))} \\
	&&&\\
	{(S^2,A)} && {(\hC,\phi(A))}.
	\arrow["\psi", from=1-1, to=1-3]
	\arrow["\phi"', from=3-1, to=3-3]
	\arrow["f"', from=1-1, to=3-1]
	\arrow["{F}", from=1-3, to=3-3]
\end{tikzcd}\]
Moreover, the map $\psi$ is unique up to equivalence with respect to the relation $\sim$ defining $\calT_A$.
\end{prop}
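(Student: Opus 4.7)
The plan is to construct $\psi$ by pulling back complex structures. First transport the standard complex structure $J_{\hC}$ on $\hC$ to $S^2$ via $\phi$, obtaining a complex structure $\phi^{*}J_{\hC}$ on $S^2$, and then pull this back through $f$ to obtain a complex structure $J$ on $S^2$. Off the critical set $C_f$, where $f$ is a local homeomorphism, the pullback is immediate; at a critical point of local degree $k$, the map $f$ is locally conjugate to $z\mapsto z^k$ in suitable charts, so the pulled-back structure extends uniquely across $C_f$. This yields a complex structure $J$ on all of $S^2$ with $f\from (S^2,J)\to (S^2,\phi^{*}J_{\hC})$ holomorphic.

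Next, invoke the uniformization theorem to obtain an orientation-preserving biholomorphism $\psi\from (S^2,J)\to \hC$, and define $F := \phi\circ f\circ \psi^{-1}\from \hC\to\hC$. By the choice of $J$, the map $F$ is holomorphic with respect to the standard complex structure on both source and target, hence nonconstant holomorphic, hence rational; the diagram commutes by construction, and the containment $F(\psi(A))\subset \phi(A)$ follows from $f(A)\subset A$.

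For uniqueness, suppose $(\psi_0,F_0)$ and $(\psi_1,F_1)$ are two choices making the diagram commute. The composition $M := \psi_1\circ\psi_0^{-1}\from \hC\to\hC$ is an orientation-preserving self-homeomorphism satisfying $F_1\circ M = F_0$. At any point of $\hC$ which is not a critical value of $F_1$, a local holomorphic inverse branch of $F_1$ expresses $M$ locally as $F_1^{-1}\circ F_0$, so $M$ is holomorphic there; the remaining finitely many points are removable by Riemann's theorem, so $M$ is a holomorphic orientation-preserving self-homeomorphism of $\hC$, hence a M\"{o}bius transformation. Thus $\psi_1 = M\circ \psi_0$ (on the nose, which in particular gives $\psi_1\simeq M\circ\psi_0\rel A$), so $[\psi_0]=[\psi_1]$ in $\calT_A$.

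The main technical subtlety is justifying that the pulled-back complex structure extends across the critical points of $f$, but this is standard Thurston-theory material handled by the local branched-cover model $z\mapsto z^k$; once that is in hand, everything else is a formal consequence of the uniformization theorem and the Riemann removable singularity theorem.
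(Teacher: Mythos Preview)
Your argument is correct and is precisely the standard one: pull back the complex structure through $\phi$ and then through the branched cover $f$ (extending across $C_f$ via the local model $z\mapsto z^k$), uniformize to produce $\psi$, and read off that $F=\phi\circ f\circ\psi^{-1}$ is rational. The uniqueness argument via $M=\psi_1\circ\psi_0^{-1}$ and removable singularities is also the standard one. One small wording issue: in the uniqueness step you want to exclude the finitely many $w$ for which $M(w)$ is a \emph{critical point} of $F_1$ (equivalently, the finitely many $w$ with $F_0(w)$ a critical value of $F_1$), not the $w$ which are themselves critical values of $F_1$; the conclusion is unaffected since either set is finite.

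As for comparison with the paper: the paper does not prove this proposition at all. It is stated as background in the preliminaries section with the remark that proofs ``can be found in any standard reference on the subject (e.g., \cite{dhthurston} or \cite{markedthurston}).'' Your write-up is exactly the argument one finds in those references.
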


\begin{defn}
Given a Thurston map $f\from (S^2,A)\to (S^2,A)$ the associated {\em Thurston pullback map} $\sigma_{f,A}\from\calT_A\to\calT_A$ is the map $[\phi]\mapsto [\psi]$, where $\psi$ is the map provided by the previous proposition.
\end{defn}

\begin{rems}
\,
\begin{enumerate}[leftmargin=2em]
\item The ``$\cdot,A$'' portion of the subscript is to emphasize the more general setting of a marked Thurston map $(f,A)$, since $\sigma_f$ typically denotes the pullback of $(f,P_f)$. Since we work almost exclusively in the marked setting in this paper, we will suppress the subscript when the context is clear and simply write $\sigma_f:= \sigma_{f,A}$.
\item We also remark that $\sigma_f$ can be defined even when the requirement $\deg(f)\geq 2$ is dropped from the definition of a Thurston map, i.e., when $f$ is simply a homeomorphism for which $f(A)=A$.
\end{enumerate}
\end{rems}

We mention one further generalization of Thurston maps as it will be useful in developing the basic composition laws for pullback maps on Teichm\"{u}ller space.

\begin{defn}\label{admissiblecover}
We call an orientation-preserving branched cover $f\from (S^2,A)\to (S^2,B)$ for $2<|A|,|B|<\infty$ {\em admissible} if: (i) $f(C_f)\subset B$, i.e., $B$ contains the critical values of $f$; and (ii) $f(A)\subset B$.
\end{defn}

This nondynamical setting has been considered by Koch (see, for instance, \cite{SarahCritEndo} and \cite{bekp}). There is a well-defined holomorphic pullback map $\sigma_f\from \calT_B\to\calT_A$ in this case as well, and it is defined analogously to the dynamical setting.

We are now ready to state the composition rule for pullback maps:

\begin{prop}\label{pullbackcomp}
Suppose $f\from (S^2,A)\to(S^2,B)$ and $g\from (S^2, B)\to (S^2,C)$ are two admissible branched covers. Then $g\circ f\from (S^2,A)\to (S^2,C)$ is an admissible branched cover, and $\sigma_{g\circ f} = \sigma_f\circ \sigma_g$. In other words, the following diagram commutes:
\[
\begin{tikzcd}
	{\calT_C} && {\calT_B} && {\calT_A}.
	\arrow["{\sigma_g}", from=1-1, to=1-3]
	\arrow["{\sigma_f}", from=1-3, to=1-5]
	\arrow["{\sigma_{g\circ f}}"',bend right, from=1-1, to=1-5]
\end{tikzcd}
\]
\end{prop}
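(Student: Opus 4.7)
The plan is to prove this by a straightforward diagram chase, invoking the uniqueness clause of Proposition~\ref{pullbackdefdiagram} at the final step. First I would verify that $g\circ f$ is itself admissible in the sense of Definition~\ref{admissiblecover}. The critical set of $g\circ f$ is $C_f \cup f^{-1}(C_g)$, and its image under $g\circ f$ is $g(f(C_f)) \cup g(C_g)$; since $f(C_f)\subset B$ and $g(B)\cup g(C_g)\subset C$, this lands in $C$. Likewise $(g\circ f)(A)=g(f(A))\subset g(B)\subset C$. Thus $\sigma_{g\circ f}\from\calT_C\to\calT_A$ is defined.

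Next, fix $[\phi]\in\calT_C$ with representative marking $\phi\from (S^2,C)\to(\hC,\phi(C))$. Applying Proposition~\ref{pullbackdefdiagram} to $g$ and $\phi$ produces a marking $\psi_g\from(S^2,B)\to(\hC,\psi_g(B))$ together with a rational map $G\from(\hC,\psi_g(B))\to(\hC,\phi(C))$ with $\phi\circ g = G\circ\psi_g$, so that $\sigma_g([\phi])=[\psi_g]$. Applying the same proposition to $f$ and the marking $\psi_g$ produces a marking $\psi_f\from(S^2,A)\to(\hC,\psi_f(A))$ and a rational map $F\from(\hC,\psi_f(A))\to(\hC,\psi_g(B))$ with $\psi_g\circ f=F\circ\psi_f$, so that $\sigma_f([\psi_g])=[\psi_f]$. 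Stacking the two commutative squares vertically yields a single commutative diagram with $g\circ f$ on the left, $\psi_f$ on top, $\phi$ on bottom, and the composition $G\circ F$ on the right; note that $G\circ F$ is again a rational map and sends $\psi_f(A)$ into $\phi(C)$.

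This exhibits $(\psi_f,G\circ F)$ as a realization of the diagram from Proposition~\ref{pullbackdefdiagram} for the admissible cover $g\circ f$ applied to $\phi$. By the uniqueness clause of that proposition, the class $[\psi_f]$ is exactly $\sigma_{g\circ f}([\phi])$, so $\sigma_{g\circ f}([\phi]) = \sigma_f(\sigma_g([\phi]))$. Since $[\phi]\in\calT_C$ was arbitrary, the identity of maps $\sigma_{g\circ f}=\sigma_f\circ\sigma_g$ follows.

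There is no substantive obstacle here; the proof is formal, and the only work is the bookkeeping needed to track the three marked sets $A,B,C$ and their images under the intermediate markings, and to check that the composed rational map $G\circ F$ still carries the correct finite set to the correct finite set. One small point worth explicitly noting in the writeup is that the construction does not depend on the choice of representative $\phi$ of $[\phi]$: replacing $\phi$ by $M\circ\phi$ for a M\"obius transformation $M$ simply postcomposes $G$ with $M$ and leaves the Teichm\"uller class $[\psi_g]$, and hence $[\psi_f]$, unchanged, so the equality descends cleanly to equivalence classes.
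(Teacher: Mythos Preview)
The paper does not actually supply a proof of this proposition; it is stated without argument as a known composition rule, with references to the literature on admissible covers. Your proof is correct and is the standard diagram-chase argument. One small remark: Proposition~\ref{pullbackdefdiagram} is stated in the paper only for the dynamical case $(S^2,A)\to(S^2,A)$, whereas you invoke it for admissible covers between distinct marked spheres $(S^2,A)\to(S^2,B)$ and $(S^2,B)\to(S^2,C)$. The paper does say the nondynamical pullback ``is defined analogously to the dynamical setting,'' so this is legitimate, but in a self-contained writeup you might note explicitly that you are using the evident extension of that proposition to the admissible-cover setting.
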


\subsection{Pullback Relation on Curves}

Consider a Thurston map $f\from S^2\to S^2$ with marked set $A$. We will call a curve $\gamma\subset S^2$ a {\em Jordan curve in the marked sphere} $(S^2,A)$ if $\gamma\subset S^2\setminus A$ and $\gamma$ is simple and closed. As stated in the introduction, there is a pullback relation on Jordan curves in $(S^2,A)$ defined in the following manner: if $\gamma$ is a Jordan curve in $S^2\setminus A$, then a connected component $\tgamma$ of $f^{-1}(\gamma)$ is a {\em pullback} of $\gamma$ by $f$. The restriction of $f\from S^2\setminus f^{-1}(A)\to S^2\setminus A$ is an ordinary covering map, so each pullback $\tgamma$ of $\gamma$ is itself a Jordan curve in $(S^2,A)$. The restriction $f|_{\tgamma}: \tgamma\to \gamma$ is also a covering map, and so has some finite degree, which we denote $\deg(f\from\tgamma\to\gamma)$.

A Jordan curve $\gamma\subset S^2\setminus A$ is {\em essential} if each of the two connected components of $S^2\setminus \gamma$ contain at least two points of $A$. We call a Jordan curve {\em peripheral} if it is not essential (so $\gamma$ either encircles a single marked point or is nullhomotopic in $S^2\setminus A$).

By lifting isotopies one can see that the isotopy classes of curves in $f^{-1}(\gamma)\rel A$ depends only on the isotopy class of $\gamma\rel A$, and not on the specific choice of $\gamma$. More precisely, we have the following proposition:

\begin{prop}\label{curverelA}
Let $(f,A)$ be a Thurston map and let $\alpha$ and $\beta$ be Jordan curves in $(S^2,A)$ with $\alpha\simeq\beta\rel A$. Then there is a bijection $\talpha \leftrightarrow \tbeta$ between the pullbacks $\talpha$ of $\alpha$ and the pullbacks $\tbeta$ of $\beta$ under $f$ such that, for all pullbacks corresponding under this bijection, we have $\talpha\simeq \tbeta\rel A$ and $\deg(f\from\talpha\to \alpha) = \deg(f\from\tbeta\to \beta)$.
\end{prop}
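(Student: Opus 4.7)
The plan is to realize the desired bijection by lifting an ambient isotopy between $\alpha$ and $\beta$ through the branched cover $f$. By hypothesis there is an isotopy $H\from S^2\times [0,1]\to S^2$ with $H_0=\id_{S^2}$, $H_t|_A=\id_A$ for every $t\in[0,1]$, and $H_1(\alpha)=\beta$ as subsets of $S^2$. Since $H_t$ fixes $A$ pointwise, restriction gives an isotopy of $S^2\setminus A$ to itself, and the path-lifting property applied to the ordinary covering map $f\from S^2\setminus f^{-1}(A)\to S^2\setminus A$ produces a unique isotopy $\tH\from (S^2\setminus f^{-1}(A))\times[0,1]\to S^2\setminus f^{-1}(A)$ satisfying $\tH_0=\id$ and $f\circ\tH_t=H_t\circ f$. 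Concretely, $\tH_t(x)$ is the time-$t$ value of the unique lift of the path $s\mapsto H_s(f(x))$ beginning at $x$.

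Next I would extend $\tH$ across the finite set $f^{-1}(A)$ by the identity; continuity of this extension follows because $f$ is proper and finite-to-one and $H$ is stationary on $A$. The resulting $\tH\from S^2\times[0,1]\to S^2$ is then an ambient isotopy rel $f^{-1}(A)$ (and in particular rel $A$), and each $\tH_t$ is a homeomorphism of $S^2$. From $f\circ\tH_1=H_1\circ f$ and $H_1(\alpha)=\beta$, the homeomorphism $\tH_1$ carries $f^{-1}(\alpha)$ onto $f^{-1}(\beta)$ and therefore induces a bijection between connected components. Defining $\tbeta:=\tH_1(\talpha)$ yields the claimed bijection, and restricting $\tH$ to $\talpha\times[0,1]$ exhibits an isotopy $\talpha\simeq\tbeta\rel A$.

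For the degree equality, the commutation $f\circ\tH_1=H_1\circ f$ restricts to the factorization $f|_{\tbeta}=(H_1|_\alpha)\circ(f|_{\talpha})\circ(\tH_1|_{\talpha})^{-1}$, exhibiting $f|_{\tbeta}$ as a homeomorphism-conjugate of $f|_{\talpha}$; hence the two covering maps of circles have equal degree. The only real technical point is the continuous extension of $\tH$ across the branch locus $f^{-1}(A)$, which is where the branched (as opposed to unbranched) nature of $f$ enters; the remainder of the argument is a routine application of covering-space theory.
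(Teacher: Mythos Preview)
Your argument is correct and is the standard isotopy-lifting proof; the paper does not supply its own argument but simply refers to \cite[Lemma~6.9]{bmexpanding}, whose proof proceeds along exactly the lines you describe. One small remark: your opening sentence tacitly invokes the fact that two Jordan curves isotopic $\rel A$ are \emph{ambiently} isotopic $\rel A$ starting from the identity, which is standard for surfaces but worth flagging; and the continuous extension of $\tH$ across $f^{-1}(A)$, which you rightly identify as the only delicate point, is typically justified using the local normal form $z\mapsto z^k$ for $f$ near each preimage point together with the fact that $H_t$ is stationary on $A$.
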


See \cite[Lemma 6.9]{bmexpanding} for a proof.

Thus there is a well-defined {\em pullback relation} on the set of isotopy classes of Jordan curves in $S^2\setminus A$. In general, given a Jordan curve $\gamma$ in $(S^2,A)$ the set of pullbacks $f^{-1}(\gamma)$ may have curves belonging to distinct isotopy classes. A special property of the case $|A|=4$ is that the essential pullbacks (if there are any) of an essential Jordan curve belong to only one isotopy class. As the case of four marked points is our primary interest we will maintain this setting for the rest of this section.

\begin{defn}
For a Thurston map $(f,A)$ with $|A|=4$, let $\gamma$ be an essential Jordan curve of $(S^1,A)$. Let $\gamma_1,\dotsc, \gamma_n$ be the set of essential pullbacks of $\gamma$ under $f$. The {\em Thurston multiplier} of $\gamma$ is defined to be
\[\lambda_f(\gamma):=\sum_{j=1}^n\frac{1}{\deg(f\from\gamma_j\to\gamma)}.\]
If all pullbacks of $\gamma$ are peripheral, then $\lambda_f(\gamma)=0$.
\end{defn}

\begin{rems}
\,
\begin{enumerate}[leftmargin=2em]
\item By Proposition \ref{curverelA}, the multiplier may be regarded as being defined on isotopy classes.
\item A simple (but as we shall see, remarkably powerful) fact is that a given Thurston map $f$ has finitely many possible multiplier values.
\end{enumerate}
\end{rems}

The notion of the Thurston multiplier is needed to precisely state Thurston's characterization theorem, which gives sharp conditions for when a Thurston map is combinatorially equivalent to a rational Thurston map. We have little use for this theorem since our main results are for maps that are already assumed to be rational, but we nevertheless state it (again, in the restricted setting of $|A|=4$).

An essential Jordan curve $\gamma$ in $(S^2,A)$ is said to {\em $f$-invariant} if each essential pullback of $\gamma$ under $f$ is isotopic to $\gamma \rel A$. Such an $f$-invariant essential Jordan curve is said to be a {\em Thurston obstruction} for $f$ if $\lambda_f(\gamma)\geq 1$.

\begin{thm}[Thurston's criterion]
Let $(f,A)$ be a Thurston map with $|A|=4$ and suppose $f$ has hyperbolic orbifold. Then $f$ is combinatorially equivalent to a rational map if and only if $f$ has no Thurston obstruction.
\end{thm}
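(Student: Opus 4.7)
The plan is to reduce the theorem to a statement about the dynamics of the Thurston pullback map $\sigma_f\from \calT_A\to\calT_A$, and to exploit the fact that $\calT_A$ is identified with the hyperbolic plane $\bbH$ when $|A|=4$. The starting point is the standard observation---which follows by unraveling Proposition \ref{pullbackdefdiagram} and chasing the diagram defining Thurston equivalence---that $(f,A)$ is combinatorially equivalent to a rational map if and only if $\sigma_f$ has a fixed point in $\calT_A$. So it suffices to characterize the absence of a fixed point of $\sigma_f$ in terms of the pullback relation on curves.

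For the ``only if'' direction, I would argue by contradiction. Assume $(f,A)$ is realized by a rational map $F$, and suppose $\gamma$ is a Thurston obstruction. Via a marking at the fixed point of $\sigma_f$, one may conjugate $f$ to $F$ so that the essential pullbacks of $\gamma$ are identified with the essential components of $F^{-1}(\delta)$ for the corresponding curve $\delta$ on $(\hC, F(A))$. The hypothesis $\lambda_f(\gamma)\geq 1$, combined with the $f$-invariance of $\gamma$ and a Riemann--Hurwitz degree count applied to the restrictions of $F$ over these essential preimages, leads to a contradiction with $F$ being a rational branched cover of degree $\geq 2$; the classical extremal length argument of Douady--Hubbard in \cite{dhthurston} produces the contradiction.

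For the ``if'' direction, suppose $\sigma_f$ has no fixed point in $\calT_A\cong\bbH$. Under the hyperbolic orbifold hypothesis, $\sigma_f$ is known to be a strict contraction of the Poincar\'{e} metric. Applying the Denjoy--Wolff theorem to the holomorphic self-map $\sigma_f\from\bbH\to\bbH$ produces a unique boundary point $\xi\in\partial\bbH$ with $\sigma_f^n(\tau) \to \xi$ for every $\tau\in\bbH$. The key step is to show $\xi$ is a rational cusp of $\bbH$: this uses that $\sigma_f$ permutes rational cusps (since pullbacks of simple closed curves are simple closed curves, and essential isotopy classes of Jordan curves correspond precisely to rational cusps in the identification $\calT_A\cong\bbH$) together with a compactness argument ruling out accumulation at an irrational boundary point. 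The rational cusp $\xi$ corresponds to an essential isotopy class of Jordan curve $\gamma$, and $\sigma_f(\xi)=\xi$ encodes exactly $f$-invariance of $\gamma$. Finally, the horocyclic derivative of $\sigma_f$ at $\xi$, expressed in cusp coordinates, can be identified with $1/\lambda_f(\gamma)$; the Denjoy--Wolff condition forces this derivative to have modulus at most $1$, whence $\lambda_f(\gamma)\geq 1$ and $\gamma$ is an obstruction.

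The main obstacle, I expect, is the analysis at the cusp: both the proof that the Denjoy--Wolff point $\xi$ is rational rather than irrational, and the precise identification of the horocyclic derivative of $\sigma_f$ at $\xi$ with the reciprocal of the Thurston multiplier. This requires a careful study of how $\sigma_f$ extends to $\partial\bbH$, in particular the asymptotic geometry of $\calT_A$ near a pinching curve (via Fenchel--Nielsen or extremal length coordinates adapted to $\gamma$), together with how the branched covering $f\from (S^2,f^{-1}(A))\to (S^2,A)$ interacts with degenerations along $\gamma$.
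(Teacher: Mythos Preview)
The paper does not actually prove this theorem. Immediately after the statement it writes: ``The proof for the case where $A=P_f$ was first published in \cite{dhthurston}. A mild generalization for marked Thurston maps (where $P_f\subset A$) can be found in \cite{markedthurston}.'' Thurston's criterion is quoted here purely as background, and no argument is supplied beyond these citations. So there is no in-paper proof to compare your proposal against.

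That said, your outline is a reasonable and essentially correct approach that is specific to the $|A|=4$ setting, and it differs in flavor from the classical Douady--Hubbard argument. Douady--Hubbard work in arbitrary $|A|$ and study iterates of $\sigma_f$ directly via Teichm\"uller-metric contraction and extremal length estimates on multicurves; degeneration to a boundary stratum produces an invariant multicurve with leading eigenvalue $\geq 1$. Your route instead exploits that $\calT_A\cong\bbH$ so that one-variable complex analysis (Denjoy--Wolff, angular derivatives) applies, and a single curve replaces a multicurve. The obstacles you flag are the genuine ones: proving the Denjoy--Wolff point is a rational cusp is not automatic (holomorphic self-maps of $\bbH$ can have irrational Denjoy--Wolff points, so you must use something special about $\sigma_f$, e.g., Selinger's extension to the WP completion or the fact that $\sigma_f$ descends to a correspondence on moduli space), and the identification of the angular derivative at a rational cusp with $1/\lambda_f(\gamma)$ requires the extremal-length/horoball analysis that the paper later records as Lemma~\ref{horoballin}. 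One small correction: you should not say $\sigma_f$ is a strict contraction of the Poincar\'e metric under the hyperbolic orbifold hypothesis---Schwarz--Pick gives only nonexpansion, and strictness can fail pointwise; what you need for Denjoy--Wolff is merely that $\sigma_f$ is holomorphic without an interior fixed point, which is exactly your hypothesis.
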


The proof for the case where $A=P_f$ was first published in \cite{dhthurston}. A mild generalization for marked Thurston maps (where $P_f\subset A$) can be found in \cite{markedthurston}.

\subsection{Mapping Class Groups}\label{mcgsubsection}

In this section we will sketch some results from the theory of mapping class groups of marked spheres. The standard reference for this material is \cite{fmprimer}, and we will not stray far from presentation laid out there.


\begin{defn}\label{mcgdef} Let $S$ be a Riemann surface (potentially with boundary). Let $\Homeo^+(S,\partial S)$ be the group of orientation-preserving homeomorphisms $\phi\from S\to S$ such that $\phi|_{\partial S}=\id_{\partial S}$; we endow the space with the compact-open topology. A continuous path $\eta\from [0,1]\to \Homeo^+(S,\partial S)$ is equivalent to an isotopy $H\from [0,1]\times S\to S \rel \partial S$ defined by $H_t=\eta(t)$.

Let $\Homeo_0(S,\partial S)$ denote the path-component of $\Homeo^+(S,\partial S)$ that contains the identity map, meaning this set consists precisely of those elements that are isotopic to the identity relative to $\partial S$. This is easily seen to be a normal subgroup of $\Homeo^+(S,\partial S)$, and so the {\em mapping class group} of $S$ is defined to be the quotient
\[\Mod(S):=\Homeo^+(S,\partial S)/\Homeo_0(S,\partial S).\]
\end{defn}

To describe isotopy classes of spheres relative to a finite set of $n$ marked points $A\subset S^2$, it is natural to put $S=S^2\setminus A$ and then consider the corresponding mapping class group. These two notions are not equivalent though, since an element of $\Mod(S^2\setminus A)$ may permute the set of punctures $A$. To amend this, we consider the {\em pure mapping class group}, which is the subgroup of the mapping class group that fixes each puncture individually. Thus we will use $\Homeo^+(S^2, A)$ to denote the group of orientation-preserving homeomorphisms that fix $A$ pointwise and $\PMod(S^2, A)$ to denote the group of isotopy classes of $\Homeo^+(S^2,A)$.

%

The following basic result from the theory of mapping class groups can be found in \cite[Chapter 2]{fmprimer} and also in \cite[Appendix]{buser}.

\begin{prop}\label{PModS03}
$\PMod(S_{0,3})$ is trivial.
\end{prop}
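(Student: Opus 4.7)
The plan is to combine a straightening-of-arcs argument with Alexander's trick on a disk. Given an orientation-preserving homeomorphism $\phi\from S^2 \to S^2$ fixing the three marked points $A = \{p_0, p_1, p_2\}$ pointwise, the goal is to build an isotopy from $\phi$ to $\id_{S^2}$ through homeomorphisms fixing $A$ pointwise. First I would arrange that $\phi$ also fixes a simple arc between two of the marked points; cutting along this arc then reduces the problem to a disk with a single interior marked point, where a version of Alexander's trick applies.

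Fix a simple arc $\alpha \subset S^2\setminus \{p_2\}$ from $p_0$ to $p_1$. Its image $\phi(\alpha)$ is another simple arc from $p_0$ to $p_1$ avoiding $p_2$, and so lies entirely in the plane $S^2\setminus \{p_2\} \cong \bbR^2$. The Jordan--Sch\"onflies theorem produces a compactly supported ambient isotopy of $\bbR^2$ that fixes $\{p_0,p_1\}$ pointwise and carries $\phi(\alpha)$ onto $\alpha$; since the support is compact, this isotopy extends to $S^2$ by the identity near $p_2$. A subsequent isotopy supported in a small neighborhood of $\alpha$ disjoint from $p_2$ then straightens $\phi|_\alpha$ to $\id_\alpha$, producing $\phi_1 \simeq \phi \rel A$ with $\phi_1|_{\alpha} = \id_{\alpha}$. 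Cutting $S^2$ along $\alpha$ yields a closed disk $D$ whose boundary is the double of $\alpha$ and whose interior contains $p_2$; the map $\phi_1$ descends to an orientation-preserving homeomorphism of $D$ that fixes $\partial D$ pointwise and fixes $p_2$. Applying Alexander's trick with the radial contraction centered at $p_2$ gives an isotopy from this map to $\id_D$ through homeomorphisms fixing $\partial D \cup \{p_2\}$ pointwise, and re-gluing along $\alpha$ yields the desired isotopy from $\phi_1$ to $\id_{S^2}$ rel $A$.

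The delicate point is the first step: arranging that $\phi(\alpha)$ can be moved onto $\alpha$ by an isotopy fixing $p_2$. This is why the reduction is performed on $S^2 \setminus \{p_2\} \cong \bbR^2$ rather than on the thrice-punctured sphere itself---in the plane, any two simple arcs with the same endpoints are ambient-isotopic rel endpoints, with no winding obstruction around $p_2$, and compact support guarantees the isotopy does not disturb the third marked point. Everything downstream is a routine application of Alexander's trick on a disk with one additional interior marked point.
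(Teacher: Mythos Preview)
The paper does not supply its own proof of this proposition; it merely cites \cite[Chapter 2]{fmprimer} and \cite[Appendix]{buser}. Your argument is correct and is essentially the standard one found in those references: reduce to the plane by removing one marked point, use a Jordan--Sch\"onflies/change-of-coordinates step to straighten an arc between the remaining two marked points, and then apply Alexander's trick on the resulting disk. The only point worth flagging is that you should be explicit about why, after the first ambient isotopy carrying $\phi(\alpha)$ onto $\alpha$ as a set, the induced self-homeomorphism of $\alpha$ is automatically orientation-preserving (it fixes both endpoints of an arc), so the ``subsequent isotopy supported near $\alpha$'' genuinely exists; and that centering the Alexander cone at $p_2$ indeed keeps $p_2$ fixed throughout because $\phi_1(p_2)=p_2$. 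With those remarks your write-up matches the level of detail one finds in Farb--Margalit.
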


It follows from the preceding proposition that any $\phi\in \Homeo^+(S^2,A)$ for $|A|=3$ is isotopic the identity relative $A$. This is not the case when $|A|\geq 4$, however. Indeed, $\PMod(S_{0,4})\cong F_2$, the free group on two generators. 

To help us characterize which homeomorphisms are relatively isotopic to the identity in the case of four marked points, we will require the following.

\begin{prop}\label{idisotopychar}
Let $A = \{a_1,a_2,a_3,a_4\}\subset S^2$ be a set of four marked points. If $\phi\in\Homeo^+(S^2,A)$ and $\phi$ induces the trivial automorphism on $\pi_1(S^2\setminus \{a_1,a_2,a_3\}, a_4)$, then $\phi$ is isotopic to the identity relative to $A$.
\end{prop}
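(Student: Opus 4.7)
The plan is to apply the Birman exact sequence to the thrice-punctured sphere $S := S^2 \setminus \{a_1, a_2, a_3\}$, with $a_4 \in S$ treated as an additional distinguished point. Combining this sequence (see \cite[Chapter 4]{fmprimer}) with Proposition \ref{PModS03} produces an isomorphism
\[
    P \from \pi_1(S, a_4) \arr \PMod(S^2, A),
\]
the so-called \emph{point-pushing} homomorphism. Concretely, $P([\gamma])$ is represented by the time-$1$ map $\gamma_1$ of an ambient isotopy $(\gamma_t)_{t \in [0,1]}$ of $S$ with $\gamma_0 = \id$ and $\gamma_t(a_4) = \gamma(t)$; the isotopy drags $a_4$ around $\gamma$ and returns it to its starting position.

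Next, I would compute the action of $P([\gamma])$ on $\pi_1(S, a_4)$. For any loop $\alpha$ based at $a_4$, the map $H(s, t) := \gamma_t(\alpha(s))$ is a free homotopy from $\alpha$ to $\gamma_1 \circ \alpha$ whose basepoint traces out the loop $\gamma$. The standard change-of-basepoint rule for free homotopies then yields
\[
    (\gamma_1)_*[\alpha] = [\gamma]^{-1} [\alpha] [\gamma] \quad \text{in } \pi_1(S, a_4),
\]
so the mapping class $P([\gamma])$ acts on $\pi_1(S, a_4)$ by conjugation by $[\gamma]$.

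To conclude, observe that $\pi_1(S, a_4)$ is the fundamental group of a thrice-punctured sphere and is therefore a free group of rank $2$, whose center is trivial. Hence if $\phi \in \Homeo^+(S^2, A)$ acts trivially on $\pi_1(S, a_4)$, the class $[\gamma] := P^{-1}([\phi])$ is central and so trivial; injectivity of $P$ then gives $[\phi] = 1$ in $\PMod(S^2, A)$, which is precisely the conclusion $\phi \simeq \id \rel A$.

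The step I expect to require the most care is the conjugation formula for the point-pushing action: navigating the basepoint-tracing and orientation conventions in the change-of-basepoint argument is easy to get reversed, but only the fact that the action is by an inner automorphism (rather than the precise sign) is needed for the final conclusion.
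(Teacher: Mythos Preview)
Your argument is correct. The paper does not actually prove this proposition; it only remarks that it is ``essentially an instance of the famous Alexander method'' and points to \cite[Chapter~2]{fmprimer}. Your route is genuinely different: rather than choosing a filling system of arcs or curves on $S_{0,4}$ and invoking the Alexander method, you use the Birman exact sequence for $S_{0,3}$ with one extra marked point, combine it with the triviality of $\PMod(S_{0,3})$ to obtain the point-pushing isomorphism $\pi_1(S,a_4)\cong\PMod(S^2,A)$, and then observe that point-pushing acts on $\pi_1$ by inner automorphisms so that a trivial action forces the pushing loop to be central in $F_2$, hence trivial. This is arguably cleaner for the case at hand, and it dovetails nicely with the paper's own later arguments: the uniqueness part of Lemma~\ref{commutinglift} uses exactly the ``center of $F_2$ is trivial'' fact, and Step~2 of Theorem~\ref{teichidentification} identifies $\phi_*$ with conjugation on the deck group, which is the covering-space incarnation of your inner-automorphism computation. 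The Alexander method has the advantage of generalizing uniformly to more marked points, whereas your Birman argument exploits the special feature that forgetting one of four marked points lands in the trivial group $\PMod(S_{0,3})$. Your caveat about the sign in the conjugation formula is well placed, and as you note, only the ``inner'' conclusion matters.
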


We omit proof, but this is essentially an instance of the famous ``Alexander method'' (again, see \cite[Chapter 2]{fmprimer}).
 
\section{The Moduli Space Map for $(f,A)$}\label{modmapsection}

In this section we will prove the following theorem:

\begin{thm}\label{maindiagram}
Suppose $(f,A)$ is a rational Thurston map with four marked points $A = \{0,1,\infty,a\}$ and at most three postcritical points $P_f \subset \{0,1,\infty\}=B$ where $f(B)\subset B$. We have the following:
\begin{enumerate}[label={\rm (\roman*)}, leftmargin=2em]
\item If $f(a)\in\{0,1,\infty\}$, then $\sigma_{f,A}$ is constant.
\item If $f(a)=a$, then using the identification $\calM_A = \hC\setminus \{0,1,\infty\}$ and $\calT_A = \bbH$, the following diagram commutes:

\begin{equation}\label{modulimapdiagram}
\begin{tikzcd}
	{\calT_A} && {\calT_A} \\
	\\
	{\calM_A} && {\calM_A,}
	\arrow["{\sigma_{f,A}}"', from=1-3, to=1-1]
	\arrow["f"', from=3-1, to=3-3]
	\arrow["\pi", from=1-3, to=3-3]
	\arrow["\pi"', from=1-1, to=3-1]
\end{tikzcd}
\end{equation}
where $\pi\from\calT_A\to\calM_A$ is the usual holomorphic universal covering map of the three-punctured sphere.
\end{enumerate}
\end{thm}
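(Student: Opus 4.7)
The plan splits into the two cases of the theorem.

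For case (i), where $f(a) \in \{0, 1, \infty\}$ so $f(A) \subset B$, I would apply the composition law for Thurston pullback maps, Proposition \ref{pullbackcomp}. Factor $f$ as a composition of admissible branched covers $f = \iota \circ \tilde{f}$, where $\tilde{f} \colon (S^2, A) \to (S^2, B)$ is $f$ viewed with target marking $B$ (admissible since $f(C_f) \subset P_f \subset B$ and $f(A) \subset B$), and $\iota \colon (S^2, B) \to (S^2, A)$ is the identity of underlying spheres (trivially admissible, having no critical points and satisfying $\iota(B) = B \subset A$). The composition rule then gives $\sigma_{f,A} = \sigma_{\tilde{f}} \circ \sigma_{\iota} \colon \calT_A \to \calT_B \to \calT_A$. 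Since $|B| = 3$, Proposition \ref{PModS03} implies $\calT_B$ is a single point, so $\sigma_{f,A}$ factors through a point and is therefore constant.

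For case (ii), where $f(a) = a$, I would use Thurston rigidity to identify the rational map appearing in the Thurston pullback diagram with $f$ itself. Fix $[\phi] \in \calT_A$ and normalize so $\phi|_B = \id_B$, which gives $\pi([\phi]) = \phi(a) =: a' \in \hC \setminus B$. Let $[\psi] = \sigma_{f,A}([\phi])$, normalized similarly; by Proposition \ref{pullbackdefdiagram} there is a rational map $F \colon \hC \to \hC$ of degree $\deg(f)$ with $F \circ \psi = \phi \circ f$. Evaluating at the three points of $B$ (and using $\phi|_B = \psi|_B = \id_B$ and $f(B) \subset B$) gives $F|_B = f|_B$, and the critical values of $F$ lie in $\phi(f(C_f)) \subset \phi(P_f) \subset B$, so $F$ is itself a rational Thurston map on $(\hC, B)$.

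The key step is to conclude $F = f$. The equation $F \circ \psi = \phi \circ f$, combined with $\phi \simeq \psi \rel B$ (which holds because both restrict to $\id_B$ and $\PMod(S_{0,3})$ is trivial), exhibits $(F, B)$ and $(f, B)$ as Thurston-equivalent rational Thurston maps. Thurston's rigidity theorem then produces a unique M\"{o}bius transformation $M$ with $F = M \circ f \circ M^{-1}$ and $M \simeq \phi \rel B$. The latter isotopy forces $M|_B = \id_B$, and the only M\"{o}bius transformation fixing $\{0, 1, \infty\}$ pointwise is the identity, so $M = \id$ and $F = f$. The defining identity then reduces to $f \circ \psi = \phi \circ f$; evaluating at $a$ and using $f(a) = a$ yields $f(\psi(a)) = \phi(a) = a'$, i.e., $f \circ \pi \circ \sigma_{f,A} = \pi$ as maps $\calT_A \to \calM_A$. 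This is exactly the commutativity of diagram \eqref{modulimapdiagram}, read as the equality of the two paths from the upper right $\calT_A$ to the lower right $\calM_A$ (the direct $\pi$ and the composition through $\sigma_{f,A}$, $\pi$, and $f$).

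The main obstacle is the careful invocation of Thurston rigidity. With only three postcritical points, the orbifold of $(f, B)$ is automatically rigid---flexible Latt\`{e}s maps require four postcritical points---so rigidity is available. The subtler technicality is checking that the M\"{o}bius transformation $M$ produced by rigidity is genuinely isotopic to $\phi$ relative to $B$ (a point that requires the statement of rigidity for marked Thurston maps from \cite{markedthurston}) and then using the triviality of $\PMod(S_{0,3})$ together with the chosen normalization to force $M = \id$ rather than a nontrivial permutation of $\{0, 1, \infty\}$.
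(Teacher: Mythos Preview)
Your argument for part~(i) is exactly the paper's: factor through $(S^2,B)$ and use that $\calT_B$ is a point.

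For part~(ii) you reach the same conclusion as the paper but by a different route. The paper does not invoke Thurston rigidity at all. Having normalized $\phi$ to fix $B$ pointwise, the paper simply asserts that one may take $F=f$ in the pullback diagram of Proposition~\ref{pullbackdefdiagram} ``since $f$ is already rational,'' and then computes $f(\psi(a))=\phi(f(a))=\phi(a)$ directly. The implicit justification is elementary: because $\PMod(S_{0,3})$ is trivial, $\phi\simeq\id_{\hC}$ rel $B$; lifting this isotopy through the branched cover $f$ (starting from the identity) produces a homeomorphism $\psi'$ with $f\circ\psi'=\phi\circ f$ that still fixes $B$ pointwise, and this $\psi'$ represents $\sigma_{f,A}([\phi])$. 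So the paper's argument is just isotopy lifting, not rigidity.

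Your route via Thurston rigidity is correct but heavier, and the step that needs the most care is precisely the one you flag: extracting from rigidity a M\"{o}bius $M$ with $F=MfM^{-1}$ \emph{and} $M\simeq\phi$ rel $B$. For $|B|=3$ this ultimately reduces to the same triviality of $\PMod(S_{0,3})$ that the paper uses directly, so the rigidity theorem is doing no real work beyond what the elementary lifting argument already provides. Your approach does have the virtue of making explicit why $F=f$ (the paper's parenthetical ``since $f$ is already rational'' is terse to the point of obscuring the actual reason), but the paper's underlying mechanism is both simpler and avoids any worry about parabolic orbifolds or the precise form of marked rigidity.
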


Note that the $f$ in the above theorem statement is actually its restriction to the image set $\pi(\sigma_{f,A}(\calT_A))$.

We again remark that this particular moduli correspondence diagram has been studied before in \cite{kelseylodgequadratic}. Still, in the interest of keeping this exposition self-contained we will give a detailed and careful proof.

The first part of the theorem can be dispensed with immediately:

\begin{proof}[Proof of part {\rm (i)}]
Suppose $f(a)\in\{0,1,\infty\} = B$. That $\sigma_{f,A}$ is constant will follow from a simple application of ``McMullen's composition trick'' (see \cite{bekp}). We may write $f = f_1\circ f_0$ where $f_0\from (S^2,A)\to (S^2,B)$ is defined by $f_0(z):=f(z)$ and $f_1\from (S^2,B)\to (S^2,A)$ is defined by $f_1(z):=z$. Both of these maps are admissible coverings in the sense described in the previous section, so the pullback composition law Proposition \ref{pullbackcomp} applies and $\sigma_f = \sigma_{f_0}\circ \sigma_{f_1}$. On the other hand, $\sigma_{f_1}\from\calT_A\to\calT_B$ has a single point as its image, so $\sigma_f$ must be constant.
\end{proof}

Before giving the second part of the proof we clarify why the identifications of $\calM_A$ and $\calT_A$ given in the theorem hold.

\subsection{Teichm\"{u}ller Space as a Cover of Moduli Space}\label{modcoversection}

In this section we establish an equivalence between two alternative definitions of Teichm\"{u}ller space: (i) equivalence classes of marking homeomorphisms, and (ii) as the universal cover of moduli space. We will restrict our attention to the case where $|A|=4$ since this is all we need for our purposes, but similar ideas can be used to prove the more general case.

\subsection*{Normalizations.}\addtocontents{toc}{\SkipTocEntry} 

Let us normalize our identifications of $\calT_A$ and $\calM_A$ as in the theorem statement. We wish to work with explicit coordinates, so let $S^2=\hC$. Throughout this section, we will put $A = \{0,1,\infty,a\}$ and $B = \{0,1,\infty\}$. Note that every marking homeomorphism $\phi\from (\hC,A)\to\hC$ is equivalent to one that fixes $B$ pointwise. Indeed, M\"{o}bius transformations are $3$-transitive on the sphere, so we may simply choose $M$ such that $(M\circ\phi)(b) = b$ for each $b\in B$. Then clearly $\phi\sim M\circ \phi$ in the sense of Definition \ref{teichdef}.

Now, as in Section \ref{mcgsubsection}, let ${\rm Homeo}^+(\hC,B)$ denote the set of orientation-preserving marking homeomorphisms $\phi\from \hC\to \hC$ which fix $B$ pointwise. Two elements $\phi_0,\phi_1\in{\rm Homeo}^+(\hC,B)$ are equivalent in $\calT_A$ if and only if there is a M\"{o}bius transformation $M\from\hC\to \hC$ such that $\phi_1\simeq M\circ\phi_0 \rel A$ and $\phi_1=M\circ\phi_0$ on $A$. Since $\phi_1(b)=\phi_0(b)=b$ for each $b\in B$, such a M\"{o}bius transformation $M$ fixes three points and is thus the identity. This allows us to write
\begin{equation}\label{teichnew}
\calT_A = {\rm Homeo}^+(\hC,B)/\sim
\end{equation}
where $\phi_0\sim \phi_1$ if and only if $\phi_0\simeq \phi_1 \rel A$.

We remark that such an isotopy $H_t$ must have $H_t|_A = H_0|_A = \phi_0|_A$. Since $\phi_0$ fixes $B = \{0,1,\infty\}$ pointwise, we get $H_t\in {\rm Homeo}^+(\hC,B)$ and also $H_t(a) = \phi_1(a)=\phi_2(a)$ for each $t\in[0,1]$.

Next, consider the moduli space $\calM_A$. Again appealing to $3$-transitivity of M\"{o}bius transformations, we see that every injection $\iota\from A\hookrightarrow \hC$ is equivalent in $\calM_A$ to one which fixes $B = \{0,1,\infty\}$ pointwise. Within the subset of injections which fix $B$ pointwise, two such injections $\iota_0,\iota_1$ are equivalent if and only if $\iota_0(a)=\iota_1(a)\in\hC\setminus \{0,1,\infty\}$. This yields an identification of $\calM_A$ with $\hC\setminus \{0,1,\infty\}$ given by $[\iota]\mapsto \iota(a)$ where $\iota\from\{a\}\hookrightarrow \hC\setminus \{0,1,\infty\}$. Hence
\begin{equation}\label{modulinew}
\calM_A =\hC\setminus B.
\end{equation}

\subsection*{Universal Cover of Moduli Space}\addtocontents{toc}{\SkipTocEntry} 

The identification (\ref{modulinew}) above is actually a holomorphic isomorphism. Furthermore, $\calT_A$ can also be given a complex structure (see \cite[Section 6.5]{hubbardhandbook}). With respect to this structure, the natural covering map $\pi_A\from\calT_A\to\calM_A$ given by $[\phi]\mapsto \phi(a)$ that we defined in Section \ref{preliminarysection} is actually holomorphic. From these observations it will easily follow that $\calT_A$ is isomorphic to the upper half-plane $\bbH$. Indeed, using our characterizations (\ref{teichnew}) and (\ref{modulinew}), let $\pi_A\from\calT_A\to\calM_A$ be as above and let $\pi\from\bbH\to \calM_A$ be the holomorphic universal covering map provided by the Uniformization Theorem. By invoking universality of the covering spaces (let us say in the topological category for now), there is some homeomorphism $\epsilon\from\calT_A\to \bbH$ such that the diagram below commutes.

\begin{equation}\label{epsilondiagram}
\begin{tikzcd}
	{\calT_A} && \bbH \\
	\\
	& {\calM_A}
	\arrow["{\pi_A}"', from=1-1, to=3-2]
	\arrow["\pi", from=1-3, to=3-2]
	\arrow["\epsilon", from=1-1, to=1-3]
\end{tikzcd}\end{equation}

Since $\pi_A = \pi\circ\epsilon$ and the involved spaces have complex structures such that $\pi_A$ and $\pi$ are holomorphic, we may in fact say $\epsilon$ is a holomorphic isomorphism (rather than merely a homeomorphism). The map $\epsilon$ is unique after specification of a single value. Note $\pi_A([\id_{\hC}]) = a = (\pi\circ \epsilon)([\id_{\hC}])$, so from now on we fix a basepoint $\tau_0\in\pi^{-1}(a)$ so that $\epsilon([\id_{\hC}]) = \tau_0$.

This description alone is technically sufficient for us to proceed with the proof of Theorem \ref{maindiagram}. Still, it is nice to give a more concrete description of the map $\epsilon$. In the remainder of this section we provide an explicit construction of the map $\epsilon$ and check that it establishes the claimed holomorphic isomorphism.

\subsection*{The Construction}\addtocontents{toc}{\SkipTocEntry} 
A representative $\phi$ of an element of $\calT_A$ restricts to a self-map of $\calM_A = \hC\setminus B$. In an abuse of notation, we will not distinguish between $\phi$ and $\phi|_{\calM_A}$. From standard covering space theory there is a lifted homeomorphism $\tphi\from\bbH\to\bbH$ such that the following diagram commutes:
\begin{equation}\label{liftconstruction}
\begin{tikzcd}
	\bbH && \bbH \\
	\\
	{\calM_A} && {\calM_A}.
	\arrow["\pi"', from=1-1, to=3-1]
	\arrow["{\tphi}", from=1-1, to=1-3]
	\arrow["\phi"', from=3-1, to=3-3]
	\arrow["\pi", from=1-3, to=3-3]
\end{tikzcd}
\end{equation}

In general $\tphi$ is not uniquely determined. As we shall see later, though, it becomes so when we add the requirement that $\tphi$ commutes with every deck transformation of $\pi$. We will then prove

\begin{thm}\label{teichidentification}
The map $\epsilon\from \calT_A\to \bbH$ defined by $\epsilon([\phi]) = \tphi(\tau_0)$, where $\tphi$ is the unique lift of $\phi$ as in (\ref{liftconstruction}) which commutes with every deck transformation of $\pi$, is the holomorphic isomorphism satisfying (\ref{epsilondiagram}).
\end{thm}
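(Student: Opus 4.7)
The plan is to first establish existence and uniqueness of the deck-transformation-commuting lift $\tphi$, then verify that the prescription $\epsilon([\phi]) = \tphi(\tau_0)$ descends to $\calT_A$, satisfies the commutative triangle (\ref{epsilondiagram}), is a bijection, and is holomorphic. The key technical input is that $\Homeo^+(\hC, B)$ is path-connected, so every $\phi \in \Homeo^+(\hC, B)$ is the time-one map of some isotopy $H_t$ with $H_0 = \id$ and $H_t|_B = \id_B$ for all $t$.

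For existence of the commuting lift, I would lift such an isotopy to a family $\tH_t \from \bbH \to \bbH$ with $\tH_0 = \id$. For any fixed $\gamma \in G := \Deck(\pi)$ and any $x \in \bbH$, the path $t \mapsto \tH_t \gamma \tH_t^{-1}(x)$ is continuous and, one checks, lies in the discrete fiber $\pi^{-1}(\pi(\gamma(x)))$, hence is constant. Thus $\tH_t \gamma \tH_t^{-1} = \gamma$ for every $t$, and $\tphi := \tH_1$ commutes with all of $G$. For uniqueness, any two commuting lifts of $\phi$ differ by a deck transformation $\delta$ satisfying $\delta\gamma = \gamma\delta$ for every $\gamma \in G$; but $G \cong \pi_1(\calM_A) \cong F_2$ has trivial center, so $\delta = \id$.

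For well-definedness of $\epsilon$, suppose $\phi_0 \simeq \phi_1$ rel $A$ via an isotopy $K_t$. Concatenating a path from $\id$ to $\phi_0$ in $\Homeo^+(\hC,B)$ with $K_t$ produces an isotopy from $\id$ to $\phi_1$ whose lift starting at $\id$ realizes, by the uniqueness above, the commuting lift $\tphi_1$. During the rel-$A$ portion $K_t$, the base image $K_t(a) = \phi_0(a)$ is constant, so the continuous path $\tK_t(\tau_0)$ in $\bbH$ projects to a constant path in $\calM_A$, forcing it to lie in the discrete fiber $\pi^{-1}(\phi_0(a))$ and hence be constant. Therefore $\tphi_0(\tau_0) = \tphi_1(\tau_0)$. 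Commutativity of (\ref{epsilondiagram}) is then immediate: $\pi(\tphi(\tau_0)) = \phi(\pi(\tau_0)) = \phi(a) = \pi_A([\phi])$. Surjectivity of $\epsilon$ follows by running the construction in reverse: given $\tau \in \bbH$, project a path $\tau_0 \to \tau$ to a path $\alpha$ in $\calM_A$, extend via the isotopy extension theorem to an ambient isotopy on $\hC$ fixing $B$ pointwise, and take $\phi$ to be its time-one map; unique path lifting then forces $\epsilon([\phi]) = \tau$.

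For injectivity, if $\tphi_0(\tau_0) = \tphi_1(\tau_0)$ then $\phi_0|_A = \phi_1|_A$, and $\tpsi := \tphi_1 \tphi_0^{-1}$ is a lift of $\psi := \phi_1 \phi_0^{-1}$ that commutes with all of $G$ and fixes $\tphi_0(\tau_0)$. Tracing loops through the covering shows $\psi_*$ acts trivially on $\pi_1(\calM_A, \phi_0(a))$: a loop corresponding to $\delta \in G$ via the basepoint $\tphi_0(\tau_0)$ lifts to a path ending at $\delta(\tphi_0(\tau_0))$, while its image under $\psi$ lifts through $\tpsi$ to a path ending at $\tpsi(\delta(\tphi_0(\tau_0))) = \delta(\tphi_0(\tau_0))$. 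By Proposition \ref{idisotopychar} applied with the marked set $B \cup \{\phi_0(a)\}$, $\psi$ is isotopic to the identity rel this set, and post-composing with $\phi_0$ yields $\phi_1 \simeq \phi_0$ rel $A$. Finally, holomorphicity is immediate: locally $\epsilon = \pi^{-1} \circ \pi_A$, where both factors are holomorphic, and a holomorphic bijection between Riemann surfaces is biholomorphic. The main obstacle I anticipate is the basepoint bookkeeping in the injectivity step, since Proposition \ref{idisotopychar} is phrased with basepoint inside the marked set, whereas $\psi$ here fixes $\phi_0(a)$ rather than $a$; this forces the detour through the intermediate marked set $B \cup \{\phi_0(a)\}$ before converting back to an isotopy rel $A$.
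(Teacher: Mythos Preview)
Your proposal is correct and follows essentially the same architecture as the paper's proof: existence and uniqueness of the commuting lift via an isotopy from the identity and the triviality of the center of $F_2$; well-definedness by lifting a rel-$A$ isotopy and using discreteness of fibers; surjectivity by point-pushing; injectivity via Proposition~\ref{idisotopychar}; and holomorphicity from $\epsilon = \pi^{-1}\circ\pi_A$ locally.

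The one place you diverge is the injectivity step, and here the paper's choice is tidier. You form $\psi = \phi_1\phi_0^{-1}$, which fixes $\phi_0(a)$ rather than $a$, forcing the detour through the auxiliary marked set $B\cup\{\phi_0(a)\}$ that you flag at the end. The paper instead reduces to the statement ``$\tphi(\tau_0)=\tau_0$ implies $\phi\simeq\id$ rel $A$'' by considering (implicitly) $\phi=\phi_1^{-1}\phi_0$; since the commuting lift is multiplicative, $\widetilde{\phi_1^{-1}\phi_0}=\tphi_1^{-1}\tphi_0$ fixes $\tau_0$, hence $\phi(a)=a$ and Proposition~\ref{idisotopychar} applies directly with the original basepoint $a$. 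Your route works, but swapping the order of composition removes the obstacle you anticipated.
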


Before proceeding with the proof we first need some technical results concerning lifts and isotopies.

\begin{lem}[Homotopy lifting property]
Let $\pi\from \widetilde{Y}\to Y$ be a covering map and let $k_0\from X\to Y$ be a map from a locally path-connected space $X$. Let $K\from X\times [0,1]\to Y$ be a homotopy with $K_0 = k_0$, and let $\tilde k_0\from X\to \widetilde{Y}$ be a lift of $k_0$. Then the homotopy $K$ uniquely lifts to a homotopy $\tK\from X\times [0,1]\to \widetilde{Y}$ such that $\tK_0 = \tilde k_0$ and $\pi\circ \tK_t = K_t$ for all $t\in[0,1]$.
\end{lem}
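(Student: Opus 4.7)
My plan is to prove this as a standard result from covering space theory, working first on the uniqueness and then the existence of $\widetilde K$.

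\emph{Uniqueness.} I would first record the general principle that two continuous lifts of the same map which agree at a point of a connected set agree on the whole set. Applying this fiberwise to each path $t \mapsto K(x,t)$, any two homotopy lifts $\widetilde K, \widetilde K'$ satisfying the hypotheses must agree on each slice $\{x\}\times[0,1]$, since they agree at $t=0$ by assumption and lie over the same path. This gives $\widetilde K = \widetilde K'$.

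\emph{Existence, construction.} For each $x \in X$, apply the classical path-lifting property (the special case $X =$ point) to the path $t\mapsto K(x,t)$, starting from $\widetilde k_0(x)$. This produces a unique lifted path $t \mapsto \widetilde K(x,t)$ with $\widetilde K(x,0) = \widetilde k_0(x)$ and $\pi(\widetilde K(x,t)) = K(x,t)$. Define $\widetilde K \colon X \times [0,1] \to \widetilde Y$ by $\widetilde K(x,t) := \widetilde K(x,t)$. The only nontrivial point is continuity of this map; the set-theoretic formulas $\widetilde K_0 = \widetilde k_0$ and $\pi \circ \widetilde K_t = K_t$ hold by construction.

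\emph{Continuity.} This is the main obstacle, and it is where local path-connectedness of $X$ is used. Fix $x_0 \in X$. By compactness of $\{x_0\}\times[0,1]$ and the fact that $\pi$ is a covering map, I can choose a partition $0 = t_0 < t_1 < \cdots < t_n = 1$ and an evenly covered open neighborhood $V_i \subset Y$ of $K(\{x_0\}\times[t_{i-1},t_i])$ for each $i$. By continuity of $K$ and compactness, there is an open neighborhood $W$ of $x_0$ in $X$ with $K(W\times[t_{i-1},t_i])\subset V_i$ for every $i$; using local path-connectedness, I refine $W$ to a path-connected open neighborhood $U$ of $x_0$. I then construct the lift on $U\times[0,1]$ inductively over $i$: on $U\times[t_{i-1},t_i]$, choose the sheet of $\pi^{-1}(V_i)$ containing the already-constructed value $\widetilde K(x_0, t_{i-1})$, and define $\widetilde K$ on $U\times[t_{i-1},t_i]$ by composing $K$ with the local inverse of $\pi$ on that sheet, after translating horizontally so that the new piece matches the previous one at $t = t_{i-1}$ by the uniqueness step. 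By the uniqueness of path lifts, the result agrees with the pointwise construction above, and on $U\times[t_{i-1},t_i]$ it is manifestly continuous as a composition of continuous maps. This establishes continuity at each $(x_0,t)$ and completes the proof.
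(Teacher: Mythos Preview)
The paper does not actually prove this lemma; it simply cites Hatcher, Proposition~1.30, as a standard reference. Your writeup is essentially the textbook argument found there and is correct in outline.

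One phrase deserves cleaning up: in the inductive step you write ``after translating horizontally so that the new piece matches the previous one at $t=t_{i-1}$ by the uniqueness step.'' This is vague, and invoking the uniqueness step here risks circularity, since uniqueness as you stated it presupposes continuity of the lift. The clean way to justify the gluing is: the previously constructed lift $\widetilde K|_{U\times\{t_{i-1}\}}$ is continuous by the inductive hypothesis, $U$ is connected (this is where local path-connectedness enters), and its image lies in $\pi^{-1}(V_i)$, which is a disjoint union of sheets; hence the image lies in a \emph{single} sheet, necessarily the one containing $\widetilde K(x_0,t_{i-1})$. Defining the new piece via the local inverse of $\pi$ onto that sheet then automatically agrees with the old piece on $U\times\{t_{i-1}\}$. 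With that clarification the argument is complete.
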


A proof of this fact can be found in any standard reference on algebraic topology, e.g., \cite[Proposition 1.30]{hatcher}. We will use this to prove

\begin{lem}\label{isotopyliftinglemma}
Suppose $\phi\in{\rm Homeo}^+(\hC,B)$ and let $\tphi\from\bbH\to\bbH$ be a lifted homeomorphism as in (\ref{liftconstruction}). Let $H\from \hC\times[0,1]\to \hC$ be an isotopy ${\rm rel.\,} B$ with $H_0 = \phi$. Then the isotopy $H$ uniquely lifts to an isotopy $\tH\from \bbH\times[0,1]\to\bbH$ such that $\tH_0 = \tphi$ and $\pi\circ \tH_t=H_t\circ \pi$ for all $t\in[0,1]$.

Moreover, if $H$ is an isotopy relative to $A=\{0,1,\infty,a\}$, then the lift $\tH$ is an isotopy relative to $\pi^{-1}(a)$.
\end{lem}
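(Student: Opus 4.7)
The plan is to derive the lemma from the classical homotopy lifting property stated just before it, with two small additional arguments: one to promote a lifted homotopy to a lifted isotopy, and one to handle the relative case.

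First I would reduce to the punctured setting. Since $H$ is an isotopy rel $B$, each time slice $H_t$ fixes $B$ pointwise, hence restricts to a homeomorphism of $\calM_A = \hC \setminus B$. Thus $H$ may be viewed as an isotopy of $\calM_A$ through $B$-preserving homeomorphisms of $\hC$. Now consider the continuous map $K\from \bbH \times [0,1] \to \calM_A$ given by $K(\tau,t) = H_t(\pi(\tau))$. Note $K_0 = \phi\circ \pi = \pi\circ\tphi$, so $\tphi\from \bbH \to \bbH$ is a lift of $K_0$. Applying the homotopy lifting property to $K$ with initial lift $\tphi$ produces a unique continuous $\tH\from \bbH\times[0,1]\to \bbH$ with $\tH_0=\tphi$ and $\pi\circ\tH_t = H_t\circ\pi$ for all $t$. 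Uniqueness of $\tH$ as stated in the lemma also follows from the uniqueness clause in the homotopy lifting property.

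Next I would show that each $\tH_t$ is itself a homeomorphism, so that $\tH$ is a bona fide isotopy. Here is where the main subtlety lies. Let $G_t := H_t^{-1}\from \hC\to\hC$; then $G$ is also an isotopy rel $B$, with $G_0 = \phi^{-1}$, and $\tphi^{-1}\from\bbH\to\bbH$ is a lift of $\phi^{-1}$ with respect to $\pi$. Applying the homotopy lifting property again yields a continuous $\tG\from \bbH\times[0,1]\to\bbH$ with $\tG_0 = \tphi^{-1}$ and $\pi\circ \tG_t = G_t\circ\pi$. Then the composed homotopies $t\mapsto \tG_t\circ \tH_t$ and $t\mapsto \tH_t\circ\tG_t$ both lift the constant homotopy $t\mapsto \id_{\calM_A}$ through $\pi$ and equal the identity at $t=0$; by the uniqueness clause of the homotopy lifting property each must be identically the identity. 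Hence $\tH_t$ is a homeomorphism with inverse $\tG_t$ for every $t$.

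Finally, for the moreover clause, suppose $H$ is an isotopy rel $A$, and fix any $\tau \in \pi^{-1}(a)$. Then $t\mapsto H_t(a)$ is the constant path at $a$, so the path $\eta(t) := \tH_t(\tau)$ satisfies $\pi(\eta(t)) = H_t(\pi(\tau)) = H_t(a) = a$, i.e., $\eta([0,1])\subset \pi^{-1}(a)$. Since $\pi^{-1}(a)$ is discrete in $\bbH$ (as $\pi$ is a covering map) and $\eta$ is continuous, $\eta$ is constant; combined with $\eta(0) = \tH_0(\tau) = \tphi(\tau)$, this gives $\tH_t(\tau) = \tphi(\tau)$ for all $t$, so $\tH$ is an isotopy rel $\pi^{-1}(a)$.

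The main obstacle is step two, upgrading the lifted homotopy to an isotopy; there is no purely local argument that each $\tH_t$ is bijective, so one must genuinely invoke uniqueness of lifts applied to both $H$ and $H^{-1}$ simultaneously. The other two steps are essentially immediate from the cited homotopy lifting property plus the discreteness of $\pi^{-1}(a)$.
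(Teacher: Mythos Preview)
Your proof is correct and follows essentially the same route as the paper: apply the homotopy lifting property to $H_t\circ\pi$, then lift $H_t^{-1}\circ\pi$ separately and show the two composites are the identity, and finally handle the relative clause via discreteness of $\pi^{-1}(a)$. The only cosmetic difference is that you dispatch the composites $\tG_t\circ\tH_t$ and $\tH_t\circ\tG_t$ by invoking the uniqueness clause of the homotopy lifting property applied to the constant homotopy, whereas the paper spells this out as a pointwise discreteness argument; these are the same idea.
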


\begin{proof}
Consider the map $K\from\bbH\times[0,1]\to\hC\setminus B$ defined by $K_t:= H_t\circ \pi$. This defines a homotopy with $K_0 = \phi\circ\pi$. By definition the map $\tphi\from\bbH\to\bbH$ is a lift of $\phi\circ\pi$ in the sense that $\pi\circ\tphi = \phi\circ\pi$. Thus the homotopy lifting property applies and we obtain a unique lift $\tK\from \bbH\times[0,1]\to \bbH$ such that $\pi\circ\tK_t = K_t = H_t\circ\pi$ for all $t\in [0,1]$ and also $\tK_0 = \tphi$.

We now claim that the map $\tK$ is actually an isotopy. To this end, note that one can also apply the homotopy lifting property to the homotopy $L_t:=H_t^{-1}\circ\pi$ with initial lift $\tphi^{-1}$ to obtain a unique lift $\tL\from \bbH\times[0,1]\to\bbH$ such that $\pi\circ\tL_t = L_t = H_t^{-1}\circ\pi$ for all $t\in[0,1]$ and also $\tL_0=\tphi^{-1}$. Now $\tK_0 \circ \tL_0 = \tphi\circ\tphi^{-1} = \id_\bbH$ and, for each $t\in [0,1]$,
\[\pi\circ \tK_t\circ\tL_t = (H_t\circ \pi)\circ \tL_t  = H_t\circ (\pi\circ \tL_t) = H_t\circ (H_t^{-1}\circ\pi) =\pi.\]
Now for any $\tau\in\bbH$, the continuous path $\alpha(t):=(\tK_t\circ\tL_t)(\tau)$ starts at $\tau$ at $t=0$ and is contained in the discrete set $\pi^{-1}(\pi(\tau))$ by the above observation. By connectedness we conclude $\alpha(t) = \tau$ for all $t\in [0,1]$. Thus, for each $t\in[0,1]$, $(\tK_t\circ\tL_t)(\tau)=\tau$ for each $\tau\in\bbH$ and hence $\tK_t\circ\tL_t = \id_{\bbH}$. This shows that each $\tK_t$ is invertible with continuous inverse $\tK_t^{-1} = \tL_t$, and we conclude $\tK_t$ is a homeomorphism for each $t\in [0,1]$,

Finally, put $\tH_t:=\tK_t$. As we have shown above this is an isotopy with the desired properties, and its uniqueness follows from the uniqueness of the homotopy lifting property.

Let us now prove the last claim of the lemma statement, i.e., let us show that $\tH$ is an isotopy relative to $\pi^{-1}(a)$ given that $H$ is an isotopy relative to $A$. Let $\tau\in\pi^{-1}(a)$ so $\pi(\tau)=a$. Since $H$ is an isotopy relative to $A$, $H_t(a) = \phi(a)$ for all $t\in[0,1]$. Thus
\[\pi(\tH_t(\tau)) = (\pi\circ \tH_t)(\tau) = (H_t\circ\pi)(\tau) =\phi(a).\]
Arguing as before, the path $\beta(t):=\tH_t(\tau)$ begins at $\tH_0(\tau) = \tphi(\tau)$ at $t=0$ and lies inside the discrete set $\pi^{-1}(a)$, from which we conclude $\tH_t(\tau) = \tphi(\tau)$ for all $t\in[0,1]$. This completes the proof.
\end{proof}

The statement of Theorem \ref{teichidentification} makes reference to lifts of $\phi\in\Homeo^+(\hC,B)$ which commute with every deck transformation. The next lemma will establish the existence and uniqueness of such a lift.

\begin{lem}\label{commutinglift}
Suppose $\phi\in\Homeo^+(\hC,B)$. Then $\phi$ has a lift $\tphi$ as in (\ref{liftconstruction}) which commutes with every deck transformation, and this lift is unique.
\end{lem}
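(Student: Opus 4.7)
The plan is to establish existence by lifting an isotopy from the identity to $\phi$, and uniqueness by appealing to the structure of the deck group.

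For existence, I would first observe that $\phi$ fixes $B$ pointwise, so by Proposition \ref{PModS03} (triviality of $\PMod(S_{0,3})$) there is an isotopy $H\from \hC \times [0,1]\to \hC$ relative to $B$ with $H_0 = \id_{\hC}$ and $H_1 = \phi$. Now I apply Lemma \ref{isotopyliftinglemma}, but with initial lift $\id_{\bbH}$ of $\id_{\hC}$: this yields a unique lifted isotopy $\tH\from \bbH\times [0,1] \to \bbH$ with $\tH_0=\id_\bbH$ and $\pi\circ \tH_t = H_t \circ \pi$ for all $t\in[0,1]$. Defining $\tphi := \tH_1$ gives a lift of $\phi$ in the sense of diagram (\ref{liftconstruction}).

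To see $\tphi$ commutes with every deck transformation, fix $\gamma\in\Deck(\pi)$ and consider the path of homeomorphisms $\tK_t := \gamma \circ \tH_t \circ \gamma^{-1}$. Since $\pi\circ\gamma = \pi$, we compute
\[\pi \circ \tK_t = \pi \circ \tH_t \circ \gamma^{-1} = H_t \circ \pi \circ \gamma^{-1} = H_t \circ \pi,\]
so $\tK$ is a lift of the isotopy $H$ through $\pi$, and moreover $\tK_0 = \gamma\circ\id_\bbH\circ\gamma^{-1} = \id_\bbH$. The uniqueness clause of Lemma \ref{isotopyliftinglemma} then forces $\tK_t = \tH_t$ for all $t\in[0,1]$; specializing to $t=1$ yields $\gamma \circ \tphi = \tphi \circ \gamma$, as desired.

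For uniqueness, suppose $\tphi_1$ and $\tphi_2$ are two lifts of $\phi$ that each commute with every deck transformation. Then $\delta := \tphi_1 \circ \tphi_2^{-1}$ satisfies $\pi \circ \delta = \phi \circ \pi \circ \tphi_2^{-1} = \pi \circ \tphi_2 \circ \tphi_2^{-1} = \pi$, so $\delta \in \Deck(\pi)$. For any $\gamma \in \Deck(\pi)$ we have $\delta\circ\gamma = \tphi_1\circ\gamma\circ\tphi_2^{-1} = \gamma\circ\tphi_1\circ\tphi_2^{-1} = \gamma\circ\delta$, so $\delta$ lies in the center of $\Deck(\pi)$. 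But $\Deck(\pi) \cong \pi_1(\hC\setminus B) \cong F_2$, which has trivial center, so $\delta = \id_\bbH$ and therefore $\tphi_1 = \tphi_2$.

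The main conceptual step is the existence half: one must realize that the pure mapping class group result (Proposition \ref{PModS03}) is the right tool to reduce the lifting problem to a question about isotopies, so that the already-proven Lemma \ref{isotopyliftinglemma} can do the heavy lifting. The uniqueness half is then a clean algebraic consequence of the fact that $F_2$ is centerless. No serious obstacle beyond identifying these correct inputs is anticipated.
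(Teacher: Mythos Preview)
Your proof is correct and follows essentially the same approach as the paper: existence via lifting an isotopy from the identity (using Proposition~\ref{PModS03} and Lemma~\ref{isotopyliftinglemma}), and uniqueness via the centerlessness of $F_2$. The only cosmetic difference is that for the commutation step the paper shows $\tH_t\circ g\circ\tH_t^{-1}$ is a deck transformation and then uses discreteness of fibers to see it is constant in $t$, whereas you invoke the uniqueness clause of Lemma~\ref{isotopyliftinglemma} on the conjugated isotopy; these are equivalent packagings of the same argument.
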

\begin{proof}
Let $G$ be the group of deck transformations for $\pi\from \bbH\to\hC\setminus B$. Then there is a group isomorphism $G\cong \pi_1(\hC\setminus B, a)$.  By Proposition \ref{PModS03} we know $\PMod(S^2, B)$ is trivial, so $\phi\simeq\id_{\hC}\rel B$. Let $H$ be this isotopy$\rel B$ starting from the identity and ending at $\phi$, so $H_0 = \id_{\hC}$ and $H_1=\phi$. Since $\pi\circ \id_{\bbH} = \id_{\hC}\circ \pi$, we can use Lemma \ref{isotopyliftinglemma} to lift $H$ to an isotopy $\tH$ with $\tH_0=\id_{\bbH}$.

We now claim that, for any deck transformation $g\in G$ and $t\in[0,1]$, $\tH_t\circ g\circ \tH_t^{-1}$ is also a deck transformation. Indeed,
\begin{align*}
\pi\circ (\tH_t\circ g\circ \tH_t^{-1}) &= (\pi\circ \tH_t)\circ g\circ \tH_t^{-1} \\
& = (H_t\circ \pi)\circ g\circ \tH_t^{-1} \\
& = H_t\circ (\pi\circ g)\circ \tH_t^{-1} \\
& = H_t\circ \pi\circ \tH_t^{-1} = \pi\circ\tH_t\circ\tH_t^{-1} = \pi.
\end{align*}
Thus $\tH_t\circ g\circ \tH_t^{-1}$ is a fiber-preserving homeomorphism and hence a deck transformation for each $t\in[0,1]$. In fact, since the fiber of any point of $\hC\setminus B$ is discrete, it is the same deck transformation for each $t\in[0,1]$. Thus we may write $\tH_t\circ g\circ \tH_t^{-1} = g'$ for $g'\in G$. Yet at $t=0$ we have $\tH_0\circ g\circ \tH_0^{-1} = g$, so $g'=g$. It follows that, at $t=1$, $\tH_1\circ g\circ \tH_1^{-1} = g$. Thus $\tphi:=\tH_1$ is a lift of $\phi$ which commutes with every deck transformation, as desired.

We now show that the lift $\tphi$ is the unique lift of $\phi$ with the property of commuting with every deck transformation. Suppose that $\tphi_1$ and $\tphi_2$ are two such lifts. Since $\tphi_1$ and $\tphi_2$ are lifts of a common map $\phi$, there is a deck transformation $h\in G$ such that $\tphi_1 = h\circ\tphi_2$. Let $g\in G$. On one hand, we have
\[g\circ \tphi_1 = g\circ h\circ \tphi_2.\]
On the other hand,
\[g\circ\tphi_1 = \tphi_1\circ g = h\circ \tphi_2\circ g = h\circ g\circ \tphi_2,\]
where we have used that both $\tphi_1$ and $\tphi_2$ commute with every deck transformation. Thus $(g\circ h)\circ \tphi_2=(h\circ g)\circ \tphi_2$, which implies $g\circ h = h\circ g$ upon pre-composing with $\tphi_2^{-1}$. Because $g\in G$ is arbitrary this implies $h$ is in the center of $G$, i.e., $h\in Z(G) = \{z\in G: zg=gz\text{ for all $g\in G$}\}$. Yet $G\cong\pi_1(\hC\setminus B)\cong F_2$, the free group on two generators, so $Z(G)$ is trivial. Thus $h=\id$ and $\tphi_1=\tphi_2$, as desired.
\end{proof}

\begin{proof}[Proof of Theorem \ref{teichidentification}]
We shall prove $\epsilon$ is a well-defined holomorphic isomorphism in steps.

{\em Step 1}: $\epsilon$ is well-defined. Suppose $\phi_0\sim\phi_1$ in $\calT_A$, meaning $\phi_0\simeq \phi_1\rel A$. Let $\tphi_0$ and $\tphi_1$ be the respective unique lifts which commute with every deck transformation. By Lemma \ref{isotopyliftinglemma} there is a unique isotopy $\tH_t\from \bbH\times [0,1]\to\bbH$ such that $\tH_0=\tphi_0$ and $\pi\circ\tH_t = H_t\circ \pi$ for all $t\in[0,1]$. Since $\tphi_0$ commutes with every deck transformation, so too does $\tH_t$ for each $t\in[0,1]$. Indeed, one can argue as we did in the proof of Lemma \ref{commutinglift} to show that given $g\in G$, the composition $\tH_t\circ g\circ \tH_t^{-1}$ is a fixed deck transformation $g'$ for all $t\in[0,1]$. Since $\tH_0 = \tphi_0$ commutes with $g$ we obtain $g'=g$. It follows that $\tH_1$ is a lift of $\phi_1$ which commutes with every deck transformation, so $\tH_1=\tphi_1$.

Now recall that $\tH_t$ defines an isotopy relative to the fiber $\pi^{-1}(a)$. Since $\tau_0$ is an element of this set, we get $\tH_t(\tau_0)=\tH_0(\tau_0)$ for all $t\in [0,1]$, and thus $\tphi_0(\tau_0) = \tH_0(\tau_0) = \tH_1(\tau_0)=\tphi_1(\tau_0)$. We conclude $\epsilon$ is well-defined.

{\em Step 2}: $\epsilon$ is injective. It will suffice to prove that if $\tphi(\tau_0)=\tau_0$, then $\phi\simeq \id_{\hC}\rel A$. Note $\phi(a)=a$, so $\phi\in\Homeo^+(\hC,A)$ and it induces an automorphism $\phi_*\from\pi_1(\hC\setminus B, a)\to \pi_1(\hC\setminus B, a) $. If we can show that $\phi_*$ is the trivial automorphism, then we can conclude $\phi\simeq \id\rel A$ by Proposition \ref{idisotopychar}. Yet standard covering space theory says we can identify $\pi_1(\hC\setminus B, a)$ with the set of deck transformations $G$,  and using this identification the induced map $\phi_*$ is given by $\phi_*(g) = \tphi\circ g\circ \tphi^{-1}$. Since $\tphi$ was chosen to commute with every deck transformation, $\phi_*(g) = g$ for every $g\in G$. Thus $\epsilon$ is injective.

{\em Step 3}: $\epsilon$ is surjective. Let $\tau\in \bbH$. We must construct a map $\phi\in\Homeo^+(\hC,B)$ such that its lift $\tphi$ has $\tphi(\tau_0)=\tau$. The desired homeomorphism is the one provided by pushing points along a curve connecting the projections of $\tau_0$ and $\tau$ in the punctured sphere $\hC\setminus B$. The construction is a standard exercise, but we still provide a sketch. Put $z = \pi(\tau)\in\hC\setminus B$. Then choose a smooth path $\tgamma$ in $\bbH$ from $\tau_0$ to $\tau$. Then $\gamma:=\pi(\tgamma)$ is a smooth path in $\hC\setminus B$ which connects $a$ to $z$. We may choose a smooth vector field compactly supported in a neighborhood of $\gamma\subset \hC\setminus B$ so that $\gamma$ is an integral curve of the vector field (let the vector field coincide with the tangent vectors of $\gamma$ on $\gamma$, and then extend locally using bump functions). Let $t\mapsto H_t$ be the flow of this vector field on $\hC$. Then this defines an isotopy with the property $H_0=\id_{\hC}$, $H_t(b)=b$ for each $b\in B$, and $H_t(a) = \gamma(t)$ for $t\in[0,1]$. Put $\phi:=H_0$; we claim this will be the desired map.

Since $H_t$ is an isotopy$\rel B$, we may lift it to a unique isotopy $\tH_t\from\bbH\to\bbH$ such that $\tH_0=\id_{\bbH}$. Moreover,
\[\pi(\tH_t(\tau_0)) = (H_t\circ\pi)(\tau_0) = H_t(a) = \gamma(t),\]
so $\tH_t(\tau_0)$ is a lift of $\gamma$ which begins at $\tau_0\in\bbH$. Since $\tgamma$ is also a lift of $\gamma$ beginning at the same point, we must have $\tH_t(\tau_0)=\tgamma(t)$ by the path-lifting property. Thus $\tH_1(\tau_0) = \tgamma(1)=\tau$. Using the same arguments as in Lemma \ref{commutinglift} we can also show $\tH_1$ will commute with every deck transformation, so $\tphi = \tH_1$ by the uniqueness of lifts of $\phi$ with this property. Thus $\epsilon$ is surjective.

{\em Step 4:} the diagram (\ref{epsilondiagram}) holds. Given any $[\phi]\in\calT_A$ we have
\[\pi_A([\phi]) = \phi(a) = \phi(\pi(\tau_0)) = \pi(\tphi(\tau_0)) = (\pi\circ\epsilon)([\phi]).\]
Moreover, $\epsilon$ satisfies the property $\epsilon([\id_{\hC}]) = \tau_0$ by construction.

{\em Step 5:} $\epsilon$ is a holomorphic isomorphism. With respect to the complex structures on $\calT_A$, $\calM_A = \bbC\setminus B$, and $\bbH$, the maps $\pi_A\from\calT_A\to \calM_A$ and $\pi\from\bbH\to\calM_A$ are holomorphic covering maps. Since $\pi_A = \pi\circ\epsilon$, if we can show that $\epsilon$ is a homeomorphism then holomorphicity of $\epsilon$ and its inverse follows by choosing locally defined branches of $\pi^{-1}$ and $\pi_A^{-1}$ respectively. To show that $\epsilon$ is a homeomorphism, it will actually suffice to show that $\epsilon$ is continuous by appealing to the previous step and the uniqueness component of the universal property of covering spaces. For continuity, one must show two classes of markings that are close with respect to the Teichm\"{u}ller metric give rise to lifts that are close at $\tau_0$. This follows from standard (if slightly tedious) arguments, so we omit the details for now.
\end{proof}

\subsection{The Pullback Map for $(f,A)$}

With Theorem \ref{teichidentification} established, we will now identify $\calT_A$ with $\bbH$ via $\epsilon$. Let us briefly review the changes in definition this identification will cause. The definition of the pullback map $\sigma_{f,A}$ changes by conjugation with $\epsilon$,  so now $\sigma_{f,A}$ is defined as $\tphi(\tau_0)\mapsto \tpsi(\tau_0)$ where $\phi,\psi$ are representatives of Teichm\"{u}ller space $\calT_A$ in the sense of (\ref{teichnew}) for which $\sigma_{f,A}([\phi]) = [\psi]$ in the old definition. We also remark $\tau_0\in\bbH$ is the fixed point of $\sigma_{f,A}$ by construction. On the other hand, the natural projection map $\pi_A\from\calT_A\to\calM_A$ will just be the same as the map $\pi\from\bbH\to\hC\setminus\{0,1,\infty\}$ provided by uniformizing.

\begin{proof}[Proof of Theorem \ref{maindiagram} part {\rm (ii)}]
As a reminder, our goal is to show that if $f(a)=a$, then the following diagram commutes:
\begin{equation}\label{modulimapdiagram}
\begin{tikzcd}
	\bbH && \bbH \\
	\\
	\hC\setminus B && {\hC\setminus B.}
	\arrow["{\sigma_{f,A}}"', from=1-3, to=1-1]
	\arrow["f"', from=3-1, to=3-3]
	\arrow["\pi", from=1-3, to=3-3]
	\arrow["\pi"', from=1-1, to=3-1]
\end{tikzcd}
\end{equation}
Suppose $\sigma_{f,A}(\tau) = \tau'$, where $\tau = \tphi(\tau_0)$ and $\tau' = \tpsi(\tau_0)$. Note that
\[(\pi\circ\sigma_{f,A})(\tphi(\tau_0)) = \pi(\tpsi(\tau_0)) = \psi(a).\]
By construction the unlifted representatives $\phi$ and $\psi$ satisfy the commutative diagram of Proposition \ref{pullbackdefdiagram} where we take $F=f$ (since $f$ is already rational), so $\phi\circ f = f\circ \psi$. Combining this with the fact that $f(a)=a$ gives
\[(f\circ \pi\circ \sigma_{f,A})(\tau) = f(\psi(a)) = \phi(f(a)) = \phi(a).\]
Since $\phi(a) = \pi(\tphi(\tau_0)) = \pi(\tau)$, we conclude $
f\circ\pi\circ\sigma_{f,A} = \pi$ for any $\tau\in\bbH$. This completes the proof.
\end{proof}

\section{The Global Curve Attractor for $(f,A)$}\label{gcasection}

\subsection{Equivalence to Attractors on Cusps}

As described in Selinger's thesis \cite{selthesis}, $\sigma_f$ extends to the Weil-Petersson completion of Teichm\"{u}ller space $\overline{\calT}_A$, which for $|A|=4$ is $\overline{\calT}_A= \bbH\cup\hQ$. The behavior of $\sigma_f$ on $\hQ$ encodes the pullback action on essential Jordan curves in $(S^2,A)$. For a detailed discussion of these facts in the case of four marked points, see \cite[Section 6]{NETmaps}.

Maintaining the setting of the previous paragraph, Pilgrim's conjecture on the existence of a finite global curve attractor can be restated for this case as follows:
\begin{conj}\label{gcateich}
Let $f$ be a rational Thurston map with a set $A$ of four marked points and hyperbolic orbifold. Let $\sigma_f\from\overline{\calT}_A\to\overline{\calT}_A$ be the associated pullback map on augmented Teichm\"{u}ller space, where $\overline{\calT}_A = \bbH\cup\hQ$. Then there is a finite set $\calA\subset \hQ$ such that, for every $r\in\hQ$, either $\sigma_f^N(r)\in\bbH$ for some $N$, or $\sigma_f^n(r)\in \calA$ for all $n$ sufficiently large.
\end{conj}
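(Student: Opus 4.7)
The plan is to prove the conjecture under the hypothesis of Theorem~\ref{mainthm} (that $|P_f|\le 3$), following the outline sketched in the introduction, which reduces the conjecture to a statement about cusp dynamics on $\hQ$ via Theorem~\ref{maindiagram}. The key structural input is that, when $f(a)=a$, the pullback $\sigma_{f,A}\colon \bbH \to \bbH$ is a lift through $\pi\colon \bbH \to \hC\setminus B$ of the restriction of $f$ itself to an invariant three-punctured sphere, and $\tau_0 \in \pi^{-1}(a)$ is a fixed point of $\sigma_{f,A}$. So one can transfer contraction properties of $f$ on an appropriate piece of moduli space upstairs to $\sigma_{f,A}$, and analyze which cusps get trapped.

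First I would construct an orbifold structure $\calF$ on $\hC$ supported on $B$ (and possibly a short list of auxiliary ramification points forced by the dynamics), choose a conformal orbifold metric $\mu$ on $\hC\setminus P_\calF$ for which $f$ is expanding with a uniform factor $\lambda>1$ outside small neighborhoods of the cusps, and pull $\mu$ back via $\pi$ to a pseudo-metric $\tmu$ on $\bbH$. Because $\pi \circ \sigma_{f,A} = f \circ \pi$, the map $\sigma_{f,A}$ becomes uniformly contracting with factor $1/\lambda$ with respect to $\tmu$ on the portion of $\bbH$ lying above the expansion region of $f$. Next, I fix embedded horocyclic neighborhoods of the punctures of $B$ chosen small enough that $f$ sends each of these neighborhoods into itself; their $\pi$-preimages form a disjoint, $\Deck(\pi)$-equivariant family of horoballs based at the rational cusps in $\pi^{-1}(B) \subset \hQ$. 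Let $X$ be the complement of this family. Then $X$ is $\sigma_{f,A}$-invariant and complete under $\tmu$, $\tau_0 \in X$, and $\sigma_{f,A}\colon X \to X$ is uniformly contracting toward $\tau_0$.

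Now take any $r \in \hQ$. Either $r$ is one of the countably many base cusps of the horoballs forming $\bbH\setminus X$, in which case it suffices to show that, up to the $\Deck(\pi)$-action, only finitely many such base cusps are orbits under $\sigma_{f,A}$; this should follow from compactness of the closure of a fundamental piece of $X$ plus equivariance of the horoball family under pulled-back lifts of $f$. Otherwise $r$ lies on the boundary of $X$ away from the horoballs. Attach a \emph{leash} from $\tau_0$ to $r$, i.e.\ a continuous path $\eta \subset \overline{X}$ of finite $\tmu$-length connecting them; such a path exists because $X$ is a truncated hyperbolic region and $r$ is accessible through the $\tmu$-complete part of $X$. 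Iterate: the sequence $\sigma_{f,A}^n(\eta)$ is a sequence of paths from $\tau_0$ to $\sigma_{f,A}^n(r)$ whose $\tmu$-lengths shrink geometrically by $\lambda^{-n}$. Once the length drops below the $\tmu$-distance from $\tau_0$ to the nearest horoball not containing $\tau_0$, the endpoint $\sigma_{f,A}^n(r)$ must have entered a horoball — which, being based at a finite list of cusps (up to the contraction), gives the desired finite attractor $\calA$.

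The main obstacle I anticipate is the construction of the orbifold metric $\mu$, specifically handling the mixed dynamics on $\hC\setminus B$ that the introduction flags as the central difficulty: $f$ may possess attracting periodic orbits in the moduli-space image, and in naive metrics $\sigma_{f,A}$ will fail to be contracting along the corresponding directions. The role of the horoball truncation is precisely to excise these obstructions — their sizes and placement must be coordinated so that (i) $f$ maps them into themselves in $\hC$, (ii) $\sigma_{f,A}$ respects the $\pi$-lifted family, and (iii) $\sigma_{f,A}$ is uniformly $\tmu$-contracting on the complement. Producing an orbifold signature and metric with all three properties simultaneously (particularly in the presence of attracting cycles that are not themselves in $B$) is the technical heart of the argument, and where care with the hyperbolic-orbifold hypothesis on $f$ becomes essential.
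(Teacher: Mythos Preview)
Your architecture matches the paper's (commutative diagram, orbifold metric on $\hC\setminus P_\calF$, horoball truncation, leashing), but the two cases of the dichotomy are both mishandled.

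For the base cusps (the Fatou cusps, lying over $P_\calF$), your proposed reduction ``up to the $\Deck(\pi)$-action, only finitely many such base cusps are orbits under $\sigma_{f,A}$'' does not work: $\sigma_f$ does not commute with deck transformations, so quotienting by $\Deck(\pi)$ gives no control on $\sigma_f$-orbits. The paper instead leashes $\tau_0$ not to $r$ itself (which is at infinite $\tmu$-distance) but to the \emph{horocycle} $\partial B_t(r)$, and uses the key input (Lemma~\ref{horoballin}) that $\sigma_f(B_t(r))\subset B_{\delta t}(\sigma_f(r))$ with $\delta$ bounded by the finitely many Thurston multipliers. This produces a recursion $d(\tau_0,\partial B_t(\sigma_f^n r))\le \alpha\, d(\tau_0,\partial B_t(\sigma_f^{n-1} r))+C$, so the distances are eventually bounded by $C/(1-\alpha)+1$; local finiteness of disjoint horoballs then gives the finite attractor. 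Your sketch is missing this additive-constant mechanism entirely.

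For the remaining cusps (the Julia cusps, lying over $P_\calJ$), your conclusion that ``$\sigma_f^n(r)$ must have entered a horoball'' cannot occur: Julia cusps map only to Julia cusps or into $\bbH$ (Lemma~\ref{cuspsorting}), never to Fatou cusps, and the excised horoballs are based only at Fatou cusps. The correct endgame is that $d(\tau_0,\sigma_f^n(r))\le\alpha^n d(\tau_0,r)\to 0$, while $d(\tau_0,r')\ge\min_{p\in P_\calJ}d_\rho(a,p)>0$ for every Julia cusp $r'$; hence $\sigma_f^n(r)$ must become \emph{peripheral}. Finally, your closing worry about ``attracting cycles that are not themselves in $B$'' is moot under the hypothesis $P_f\subset B$, since every superattracting cycle lies in $P_f$.
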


We shall call such a set $\calA$ a {\em finite cusp attractor}.

If $\sigma_f(r)\in \hQ$, then we say that the cusp $r$ has {\em essential pullback}. If $\sigma_f(r)\in\bbH$, then we say that the cusp $r$ has {\em peripheral pullback}.

In this section, we present a proof of the conjecture for the following class of rational Thurston maps:

\resmainthm*

Note that we make no assumptions on the orbifold-type of $f$. This is because $\sigma_{f,A}$ is always contracting under these hypotheses, and will thus have a unique fixed point (ensuring this is the purpose behind the ``hyperbolic orbifold'' assumption in the original formulation).

For the purposes of this section, we will work exclusively in the case where $(f,A)$ has exactly three postcritical points $P_f = \{0,1,\infty\}=B$ and the fourth point of the marking has $f(a)=a$. Handling the other cases admitted by the theorem statement from there is easy, and we describe the necessary adjustments when we give the proof of Theorem \ref{mainthm} at the end of Section \ref{mainthmsubsec}.

\subsection{Horoballs}\label{horoballsubsec}
In this section we record some lemmas regarding horoballs in $\bbH$ for later use. Let $\hR:= \bbR\cup\{\infty\}$ denote the extended real line.

\begin{defn}
Let $t>0$ and $x\in\hR$. A {\em horoball} based at $x$ with {\em hororadius $t$} is
\[
H_{t}(x):= \left\{\tau\in\bbH:\frac{\Im \tau}{|\tau-x|^2}>\frac{1}{t}\right\}
\]
for $x\in\bbR$ and
\[
H_t(\infty): = \left\{\tau\in\bbH: \Im \tau>\frac{1}{t}\right\}
\]
for $x=\infty$.
\end{defn}

A horoball $H_t(x)$ based at a finite point $x\in\bbR$ is simply a ball in $\bbH$ that is tangent to the real line at $x$; a simple calculation shows the ball has Euclidean diameter $t$. The horoball $H_t(\infty)$, on the other hand, is a half-plane.

If $H$ is a horoball, we call its boundary $\partial H$ a {\em horocycle}. Thus a horocycle based at $x\in\bbR$ is a circle in $\bbH$ which is tangent to $\bbR$ at $x$, while a horocycle based at $\infty$ is a horizontal line.

We will now define a special family of normalized horoballs based at rational cusps. Suppose $r=\frac{p}{q}\in\bbQ$ is written in lowest terms. Then put
\[B_t(r) := H_{t/q^2}(r).\]
For $r=\infty=1/0$, we define $B_t(\infty) := H_t(\infty)$. Note the Euclidean radius of $B_t(r)$ for $r\neq\infty$ is given by $R = t/(2q^2)$.

This collection of horoballs is particularly well-behaved under the action of the modular group. Indeed, identify the matrix group $\PSL(2,\bbZ)$ with the subgroup of $\Aut(\bbH)$ given by
\[\left\{\tau\mapsto \frac{a\tau+b}{c\tau+d}:\begin{bmatrix}a & b \\ c & d
\end{bmatrix}\in \SL(2,\bbZ)\right\}.\]
We have the following lemma:
\begin{lem}\label{equalhoro}
Let $g\in\PSL(2,\bbZ)$ and suppose $g(r)=r'$ for $r,r'\in\hQ$. Then
\[g(B_t(r)) = B_t(r').\]
\end{lem}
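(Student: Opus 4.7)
The plan is to establish this by a direct computation with matrix entries. Write $g(\tau) = (a\tau + b)/(c\tau + d)$ with $a,b,c,d \in \bbZ$ and $ad - bc = 1$. I would treat the cases where $r$ or $r'$ equals $\infty$ separately; the main case is when both are finite, say $r = p/q$ in lowest terms with $q > 0$.

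First, a preliminary step: the expression $g(p/q) = (ap+bq)/(cp+dq)$ is automatically in lowest terms. Indeed, any common divisor of the numerator and denominator divides $d(ap+bq) - b(cp+dq) = p$ and $a(cp+dq) - c(ap+bq) = q$, hence divides $\gcd(p,q) = 1$. This lets us take $q' = |cp + dq|$, which is precisely the factor needed for the normalizations of the two horoballs to match.

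The heart of the argument combines the classical transformation rule $\Im g(\tau) = \Im\tau/|c\tau + d|^2$ with the explicit identity
\[
g(\tau) - g(r) = \frac{(a\tau+b)(cp+dq) - (ap+bq)(c\tau+d)}{(c\tau+d)(cp+dq)} = \frac{q\tau - p}{(c\tau+d)(cp+dq)},
\]
where the collapse of the numerator uses $ad - bc = 1$. Dividing then gives
\[
\frac{\Im g(\tau)}{|g(\tau) - r'|^2} = \frac{(cp+dq)^2}{q^2}\cdot\frac{\Im \tau}{|\tau - r|^2} = \frac{(q')^2}{q^2}\cdot\frac{\Im \tau}{|\tau - r|^2}.
\]
Since $\tau \in B_t(r)$ amounts to $\Im\tau/|\tau - r|^2 > q^2/t$ and $g(\tau) \in B_t(r')$ amounts to the analogous inequality with $(q')^2/t$ on the right, the identity above shows these two conditions are equivalent, so $g(B_t(r)) = B_t(r')$.

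The edge cases are handled by the same style of calculation. If $r = \infty$ and $c \neq 0$, then $r' = a/c$ with $q' = |c|$, and a parallel computation gives $\Im g(\tau)/|g(\tau) - a/c|^2 = c^2\Im\tau$, which reproduces the claim; if $c = 0$ then $g$ is a translation and preserves $B_t(\infty) = H_t(\infty)$ trivially; and the case $r' = \infty$ reduces to $r = \infty$ by applying $g^{-1}$. Because everything boils down to elementary algebra, there is no real obstacle; the only care required is the bookkeeping around the cusp at $\infty$. If one wanted to eliminate the case split entirely, an alternative would be to verify the lemma on the generators $T\colon \tau\mapsto \tau+1$ and $S\colon \tau\mapsto -1/\tau$ of $\PSL(2,\bbZ)$ and extend by induction on word length.
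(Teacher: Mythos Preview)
Your proof is correct. The paper takes the alternative route you mention in your final sentence: it rewrites $B_t(r)$ as $\{\tau:\Im\tau/|q\tau-p|^2>1/t\}$ and then verifies the invariance identity $\Im\tau/|q\tau-p|^2=\Im g(\tau)/|q'g(\tau)-p'|^2$ only on the two generators $\tau\mapsto\tau+1$ and $\tau\mapsto-1/\tau$ of $\PSL(2,\bbZ)$, extending to the whole group by composition. Your direct computation with a general matrix trades the need to know a generating set for the extra preliminary step of checking that $(ap+bq)/(cp+dq)$ is already in lowest terms (which you handle cleanly), and it requires a short case analysis at $\infty$; the paper's formulation in terms of $|q\tau-p|$ rather than $|\tau-r|$ absorbs the $\infty$ case automatically since $r=\infty$ corresponds to $p=1$, $q=0$. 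Both arguments are standard and of comparable length.
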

\begin{proof}

Note that if $r=p/q$ for $p,q$ integers and $q$ positive, then
\[B_t(r) =H_{t/q^2} = \left\{\tau\in\bbH:\frac{\Im\tau}{|q\tau-p|^2}>\frac{1}{t}\right\}.\]
We show that if $g\in\PSL(2,\bbZ)$ and $g(r)=r'=p'/q'$ for $p',q'$ integers and $q'$ positive, then
\[\frac{\Im\tau}{|q\tau-p|^2} = \frac{\Im g(\tau)}{|q'g(\tau)-p'|^2}.\]
It suffices to check the formula for a generating set of $\PSL(2,\bbZ)$, say $g_1$ and $g_2$ where $g_1(\tau)=\tau+1$ and $g_2(\tau) = -1/\tau$. In the first case, note $\Im g_1(\tau) = \Im \tau$ and $p'/q' = p/q+1$, so after putting $p'=p+q$ and $q'=q$ the result follows for $g_1$. Similarly, in the second case we have $\Im g_2(\tau) = \Im \tau/|\tau|^2$ and $p'/q' = -q/p$. Putting $p'=-q$ and $q'=p$, the stated formula is true in this case as well. This completes the proof.
\end{proof}

An easy corollary of this lemma is the fact that, if $t<1$, then the collection $\{B_t(r):r\in\hQ\}$ is disjoint. Suppose for contradiction two horoballs from the family intersect. By application of a suitably chosen $g\in\PSL(2,\bbZ)$, the lemma allows us to assume one of the horoballs is $B_t(\infty) = \{\tau\in\bbH: \Im(\tau)>1/t\}$. Any other horoball $B_t(r)$ for $r\in\bbR$ is tangent to the real line and has Euclidean diameter $t/q^2\leq t<1$, and so has $\Im(\tau)<1$ for all $\tau\in B_t(r)$. Since $1<1/t$, the two horoballs must be disjoint and the claim follows.

We can also say something about the images of horoballs under the Thurston pullback map. For a rational cusp $r\in\hQ$, put $\delta(r) = 1/\lambda(-1/r)$, where $\lambda_f(s)$ is the Thurston multiplier for the curve with rational slope $s$.

\begin{lem}\label{horoballin}
Suppose $\sigma_f(r)=r'$ for $r,r'\in\hQ$. Let $\delta = \delta(r)$ as above. Then
\[
\sigma_f(B_t(r))\subset B_{\delta t}(r').
\]
\end{lem}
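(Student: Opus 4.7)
The plan is to turn the desired inclusion into a boundary Julia-type inequality and then apply Julia's lemma for the upper half-plane. By Lemma \ref{equalhoro}, I first choose $g_1,g_2\in\PSL(2,\bbZ)$ with $g_1(\infty)=r$ and $g_2(\infty)=r'$, and set $F:=g_2^{-1}\circ\sigma_f\circ g_1\from\bbH\to\bbH$. The inclusion $\sigma_f(B_t(r))\subset B_{\delta t}(r')$ then becomes $F(B_t(\infty))\subset B_{\delta t}(\infty)$, equivalently the one-sided inequality $\Im F(\tau)\geq \Im\tau/\delta$ for all $\tau\in\bbH$. Since $F$ is holomorphic on $\bbH$ and extends continuously to send the boundary point $\infty$ to $\infty$ (via Selinger's boundary extension of $\sigma_f$), Julia's lemma for the upper half-plane delivers exactly such an inequality, with $\delta$ replaced by the Julia number $\alpha:=\liminf_{\Im\tau\to\infty}\Im\tau/\Im F(\tau)$, so it suffices to prove $\alpha\leq\delta$.

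Next, I would compute $\alpha$ via a local asymptotic analysis of $F$ at $\infty$ using the commutative diagram of Theorem \ref{maindiagram}. Conjugating that diagram by $g_1, g_2$ yields $f\circ(\pi\circ g_2)\circ F=\pi\circ g_1$. Each map $\pi\circ g_i\from\bbH\to\calM_A$ is a universal cover of $\calM_A=\hC\setminus\{0,1,\infty\}$ whose cusp at $\infty$ projects to the postcritical point $b_i\in\{0,1,\infty\}$, where $b_1=\pi(r)$, $b_2=\pi(r')$, and $f(b_2)=b_1$. Using the standard $q$-expansion of a universal cover at a cusp, together with the local form $f(z)=b_1+A(z-b_2)^k+\cdots$ with $k=\deg(f,b_2)$, and matching leading orders in the commutative identity, I would obtain $F(\tau)=c\tau+O(1)$ as $\Im\tau\to\infty$ for a specific constant $c>0$ computable from $k$ and the cusp widths of $\pi$ in $\Gamma(2)$. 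Consequently $\alpha=1/c$.

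The final---and most delicate---step is the combinatorial identification of the analytic scaling constant $1/c$ with $\delta=1/\lambda_f(\gamma_r)$, where $\gamma_r$ is the essential Jordan curve of slope $-1/r$. This uses the fact that in the four-marked-point setting all essential preimages of $\gamma_r$ under $f$ lie in a single isotopy class (and hence correspond to the single cusp $r'$), while the Thurston multiplier $\lambda_f(\gamma_r)=\sum_j 1/\deg(f\from\gamma_j\to\gamma_r)$ aggregates over potentially multiple components. A careful combinatorial count, tracking how each preimage component sits with respect to the local degree of $f$ at $b_2$ and the $\Gamma(2)$-orbit structure on cusps, should give $\sum_j 1/\deg(f\from\gamma_j\to\gamma_r)=c$. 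Once this identification is established, Julia's lemma closes the argument. The main obstacle is precisely this matching between the combinatorial Thurston multiplier and the analytic scaling constant from the local expansion; the remaining steps (Julia's lemma, modular conjugation, and universal-cover expansions) are routine.
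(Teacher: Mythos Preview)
Your approach via Julia's lemma is sound and is precisely the angular-derivative alternative that the paper explicitly mentions after omitting the proof. The argument the paper cites (from \cite{NETmaps}) instead identifies the normalized horoballs $B_t(r)$ with sublevel sets for the modulus of the corresponding curve class and then invokes subadditivity of moduli under pullback; that route works for arbitrary Thurston maps with four marked points and makes no use of Theorem~\ref{maindiagram}. Your route leans on the special diagram $f\circ\pi\circ\sigma_f=\pi$, so as written it only establishes the lemma in the setting where that diagram is available. This suffices for every application in the paper, but your proof is narrower than the cited one.

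The step you flag as the ``main obstacle'' is in fact short here, and your worry about ``potentially multiple components'' does not arise in this setting. Let $D$ be the disk bounded by $\gamma_r$ containing $\{b_1,a\}$. Since $a\notin P_f$, the only critical value of $f$ in $D$ is $b_1$, and a Riemann--Hurwitz count shows that each component $\widetilde{D}_j$ of $f^{-1}(D)$ is a disk containing a single preimage $p_j$ of $b_1$, with $f|_{\widetilde{D}_j}$ of degree $\deg(f,p_j)$. Because $f(a)=a$ and $f(\{0,1,\infty\})\subset\{0,1,\infty\}$, one checks $\widetilde{D}_j\cap A\subset\{a,p_j\}$; hence $\partial\widetilde{D}_j$ is essential if and only if $a\in\widetilde{D}_j$ and $p_j\in\{0,1,\infty\}$. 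As $a$ lies in exactly one $\widetilde{D}_j$, there is at most one essential pullback, and when it exists its degree is $\deg(f,p_{j_0})=\deg(f,b_2)=k$. Thus $\lambda_f(\gamma_r)=1/k$ and $\delta=k=\alpha$, which closes your argument.
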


We omit proof of the above lemma, but it can be found in \cite[Section 6]{NETmaps}. The argument presented there establishes a relationship between horoballs and the moduli of the associated curve families, and then appeals to the subadditivity of moduli. This is not the only perspective though\textemdash one can also prove the above lemma by using angular derivatives. In fact, many properties of the Thurston pullback map can be reformulated via the classical Julia-Wolff-Carath\'{e}odory theory. The author plans to present this viewpoint in a separate paper.

\subsection{Cusp Types}

For a Thurston map, if $p\in P_f$ then $f^n(p)\in P_f$ for all integers $n\geq 0$. Since $P_f$ is a finite set the iterates are eventually periodic. It follows that there are minimal integers $k\geq 0$ and $m\geq 1$ such that $f^{k+m}(p)=f^k(p)$. If $q=f^k(p)$, then the cycle associated to $p$ (by which, we mean the cycle forward iterates of $p$ eventually lie in) is given by $\{q,f(q),\dotsc, f^{m-1}(q)\}$.

\begin{defn}
If the cycle constructed above contains a critical point, we say $p$ is a {\em Fatou postcritical point}. If the cycle does not contain a critical point, we say $p$ is a {\em Julia postcritical point}. The set of Fatou postcritical points is denoted $P_\calF$, while the set of Julia postcritical points is denoted $P_\calJ$.
\end{defn}

Obviously $P_f=P_\calF\cup P_\calJ$. The terminology is a consequence of the fact that the points of $P_\calF$ belong to the Fatou set $\calF$, and similarly the points of $P_\calJ$ belong to the Julia set $\calJ$.

\begin{exmp}\label{quadoriginalexmp}
Consider the quadratic map $g(z)=(1-2z)^2$. This map has the following dynamical portrait:

\[\begin{tikzcd}
	{\frac{1}{2}} & 0 & 1\ar[loop, out=-30, in=30, distance=20] && \infty\ar[loop, out=-30, in=30, distance=20, "2:1"']
	\arrow["{2:1}", from=1-1, to=1-2]
	\arrow[from=1-2, to=1-3]
\end{tikzcd}\]
The postcritical set is $P_g = \{0,1,\infty\}$. Evidently, $P_\calF = \{\infty\}$ (since this is a critical fixed point) while $P_\calJ = \{0,1\}$ (since the cycle $1\mapsto 1$ does not contain a critical point).
\end{exmp}

Let us now specialize to the case of $(f,A)$ satisfying Theorem \ref{maindiagram} with $f(a)=a$, so the following diagram commutes:

\begin{equation}
\begin{tikzcd}
	\bbH && \bbH \\
	\\
	\hC\setminus P_f && {\hC\setminus P_f.}
	\arrow["{\sigma_{f}}"', from=1-3, to=1-1]
	\arrow["f"', from=3-1, to=3-3]
	\arrow["\pi", from=1-3, to=3-3]
	\arrow["\pi"', from=1-1, to=3-1]
\end{tikzcd}
\end{equation}
where $P_f = \{0,1,\infty\}$. Each rational cusp of $\hQ$ corresponds to one of these three points. Indeed, if $\gamma\from [0,1)\to\bbH$ is a path that ends nontangentially at $r\in\hQ$ in the sense that $\lim_{t\to 1^-}\gamma(t) = r$, then its projection $\overline{\gamma}:=\pi\circ \gamma$ must have $\lim_{t\to 1^-}\overline{\gamma}(t) = p$ for one of the points $p\in P_f$, and this point $p$ only depends on $r$ and not on the choice of path $\gamma$.

\begin{defn} If $p$ is a Fatou postcritical point, we will call $r$ a {\em Fatou cusp}. If $p$ is a Julia postcritical point, we will call $r$ a {\em Julia cusp}. The sets of Fatou cusps and Julia cusps will be denoted by $\calC_\calF$ and $\calC_\calJ$ respectively.
\end{defn}

Clearly these sets have the property $\calC_\calF\cap\calC_\calJ = \emptyset$ and $\hQ = \calC_\calF\cup\calC_\calJ$. They also satisfy the crucial property that they are not mixed by $\sigma_f$. More precisely:

\begin{lem}\label{cuspsorting}
Suppose $r\in \hQ$ has essential pullback, so $\sigma_f(r)=r'\in\hQ$. If $r$ is a Fatou cusp, then so is $r'$. Similarly, if $r$ is a Julia cusp, so is $r'$.
\end{lem}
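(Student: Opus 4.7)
The plan is to use the commutative diagram of Theorem \ref{maindiagram} to transfer the question from the dynamics of $\sigma_f$ on cusps back to the dynamics of $f$ on $P_f$. First I would pin down the cusp-to-puncture correspondence: for all sufficiently small hororadius $t > 0$, the horoball $B_t(r)$ projects under $\pi$ onto a punctured neighborhood of a single puncture $p \in P_f = \{0,1,\infty\}$, and $\pi(\tau) \to p$ as $\tau \to r$ from within $B_t(r)$. Call $p$ the puncture associated to $r$, and $p'$ the puncture associated to $r'$.

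The crux is to show $f(p') = p$. For any sequence $\tau_n \to r$ lying inside a sufficiently small horoball $B_t(r)$, Lemma \ref{horoballin} guarantees $\sigma_f(\tau_n) \in B_{\delta t}(r')$, and by shrinking $t$ these horoballs collapse to $\{r'\}$, forcing $\sigma_f(\tau_n) \to r'$ and hence $\pi(\sigma_f(\tau_n)) \to p'$. Applying the identity $f \circ \pi \circ \sigma_f = \pi$ established at the end of the proof of Theorem \ref{maindiagram} to $\tau_n$ and invoking continuity of $f$ on $\hC$ then yields $f(p') = p$. This is the step I expect to require the most care, since it is the only place where boundary behavior of $\sigma_f$ must be made rigorous; Lemma \ref{horoballin} is tailored precisely to enable this kind of limit argument, so no new technology should be needed.

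Finally the conclusion is combinatorial. Because $f(p') = p$, the forward orbit of $p'$ agrees with the forward orbit of $p$ after one step, so $p$ and $p'$ eventually enter the same periodic cycle of $f$ inside $P_f$. The Fatou/Julia dichotomy for a postcritical point depends only on whether that eventual cycle meets $C_f$, so $p$ is Fatou if and only if $p'$ is Fatou, and similarly for Julia. This is precisely the statement of the lemma.
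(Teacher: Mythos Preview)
Your proposal is correct and follows essentially the same approach as the paper: reduce the cusp statement to the fact that $f(p') = p$ via the fundamental diagram, and then observe that Fatou and Julia postcritical points are each preserved under $f$. The paper's own proof is a one-line appeal to the fundamental diagram and this invariance, whereas you have filled in the boundary-limit step carefully using Lemma~\ref{horoballin}; this extra care is reasonable but not strictly new content.
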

\begin{proof}
This easily follows from the fundamental diagram, since Fatou postcritical points only map to Fatou postcritical points, and likewise Julia postcritical points only map to Julia postcritical points. 
\end{proof}

\subsection{The Orbifold Metric}\label{orbifoldsubsec}
In the next few paragraphs we introduce some tools from the theory of orbifolds which we will need for our proof of the main theorem. For general background on orbifolds and orbifold metrics for rational Thurston maps, we refer the reader to \cite[Appendix A.10]{bmexpanding}, \cite[Appendix E]{milnorcd}, or \cite[Appendix A]{ctm}.

For any rational Thurston map $f\from\hC\to\hC$, there is a conformal metric $ds = \rho(z)|dz|$ defined on $\hC\setminus P_\calF$ with $\rho$ smooth on $\hC\setminus P_f$ with the following special property: for any $w\in \hC\setminus P_\calF$ and $z\in f^{-1}(w)$, it satisfies $\norm{df_z}_\rho>1$. It follows that for any compact set $K\subset\hC\setminus P_\calF$ there is a constant $c_K>1$ such that $\norm{df_z}_\rho\geq c_K$ for all $z\in f^{-1}(K)$, i.e., $f$ is expansive on the preimage of $K$ with respect to this metric.

The conformal metric $\rho$ described above is constructed by pushing forward the metric on an orbifold cover. We remark that we do not always use the canonical universal orbifold cover associated to a Thurston map $f$ for this purpose. Indeed, consider $(\hC,\nu_f)$ where $\nu_f$ is the ramification function associated to the Thurston map $f$ introduced in Section \ref{preliminarysection}. When $(\hC,\nu_f)$ is hyperbolic we can (and will) use the associated orbifold metric for $\rho$. If $(\hC,\nu_f)$ is not hyperbolic, then we modify the ramification function so that it is. In particular, if $(\hC,\nu_f)$ has Euclidean orbifold, then we define $\nu\from\hC\to \bbN$ by $\nu(p) = 2\nu_f(p)$ if $\nu_f(p)\geq 2$ and $\nu(p)=1$ otherwise. Since $|{\rm supp}\,(\nu_f)| = |P_f|=3$, this new orbifold $(\hC,\nu)$ is necessarily hyperbolic, and the metric induced by the associated orbifold cover will satisfy the properties we desire.

\subsection{Leashing and Proof of Theorem \ref{mainthm}}\label{mainthmsubsec}

We are now ready to begin presenting the proof of the main theorem.

First we will identify a subset $X\subset \bbH$ of hyperbolic space which is (i) forward-invariant under $\sigma_f$, meaning $\sigma_f(X)\subset X$; (ii) truncated, meaning that its complement in $\bbH$ is a collection of disjoint horoballs; and (iii) satisfies special contractive properties. The horoballs in the complement of $X$ will be the ones we ``leash'' to the fixed point of $\sigma_f$ later in the proof.

\begin{lem} For $t>0$ sufficiently small, there is a subset $X\subset \bbH$ such that $\pi(X)\cup P_{\calJ}$ is compact in $\hC\setminus P_\calF$, $\sigma_f(X)\subset X$, and $\bbH\setminus X$ is the countable disjoint union of all sets of the form $B_t(r)$ where $r\in\hQ$ is a Fatou cusp.
\end{lem}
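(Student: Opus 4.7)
The natural candidate is $X:=\bbH\setminus\bigcup_{r\in\calC_\calF}B_t(r)$; this makes condition (c) automatic, and for $t<1$ the full horoball family $\{B_t(r):r\in\hQ\}$ is already pairwise disjoint by the corollary to Lemma \ref{equalhoro}, so its Fatou subfamily is too. For the compactness of $\pi(X)\cup P_\calJ$, Lemma \ref{equalhoro} further implies that the deck group $\Gamma\leq\PSL(2,\bbZ)$ of $\pi$ permutes $\{B_t(r):r\in\hQ\}$, and so for each $p\in P_f$ the horoballs lying above $p$ descend to a single embedded punctured-disk neighborhood $U_p(t)$ of $p$ (the projection is injective on each horoball once $t$ is small). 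Hence
\[
\pi(X)\cup P_\calJ \;=\; \hC\setminus\Bigl(P_\calF\cup\bigcup_{p\in P_\calF}U_p(t)\Bigr),
\]
the complement in $\hC$ of an open set containing $P_\calF$; this is closed in $\hC$, therefore compact, and disjoint from $P_\calF$.

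For the invariance $\sigma_f(X)\subset X$, I would work through the fundamental diagram of Theorem \ref{maindiagram}. Setting $V_t^H:=\bbH\setminus X$ and $V_t^M:=\pi(V_t^H)=\bigcup_{p\in P_\calF}U_p(t)$, the $\Gamma$-invariance of the horoball family gives $\pi^{-1}(V_t^M)=V_t^H$. The commutativity $f\circ\pi\circ\sigma_f=\pi$ then reduces $\sigma_f(X)\subset X$ to the downstairs forward-invariance
\[
f(V_t^M)\;\subset\; V_t^M\cup P_f,
\]
since if $\tau\in X$ had $\sigma_f(\tau)\in V_t^H$, then $\pi(\tau)=f(\pi(\sigma_f(\tau)))$ would lie in $f(V_t^M)\setminus P_f$, forcing $\tau\in V_t^H$ under the displayed containment. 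Hence it suffices to verify, for each $p\in P_\calF$ and $t$ small, that $f(U_p(t))\subset U_{f(p)}(t)\cup\{f(p)\}$.

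The main obstacle is this last containment. At a cycle point $c\in P_\calF$ with local degree $d_c\geq 2$, a B\"{o}ttcher-type computation in the cusp coordinate shows $f(U_c(t))$ has Euclidean scale $\sim r^{d_c}\ll r$, fitting easily inside $U_{f(c)}(t)$ for $t$ small. The delicate case is a Fatou postcritical point $p$ of local degree one\textemdash a non-critical cycle member or a point preperiodic to the cycle\textemdash where $|f'(p)|$ need not be less than one and a naive local estimate fails. To handle these points I would appeal to the orbifold metric $\rho$ from Section \ref{orbifoldsubsec}: since $\|df_z\|_\rho>1$ on $\hC\setminus f^{-1}(P_\calF)$ and the $U_p(t)$ are horocyclic in the orbifold sense, the expansion of $\rho$ under $f$ translates into the required containment after iterating through the critical step of the cycle. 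Equivalently, one can pick in advance any $f$-forward-invariant open neighborhood $W\supset P_\calF$ (which exists because $P_\calF$ lies inside the superattracting basin), take $t$ small enough that $V_t^M\subset W$, and then use the trapping behavior inside $W$ together with the finiteness of $P_\calF$ to close the containment uniformly in $p$.
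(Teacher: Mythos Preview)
Your approach mirrors the paper's: set $X=\bbH\setminus\bigcup_{r\in\calC_\calF}B_t(r)$, deduce compactness of $\pi(X)\cup P_\calJ$ from the horoballs descending to cusp neighborhoods, and reduce $\sigma_f(X)\subset X$ through the fundamental diagram to the downstairs containment $f(U_p(t))\subset U_{f(p)}(t)$ for each $p\in P_\calF$. The paper asserts this last containment for small $t$ with the one-line justification that the points of $P_\calF$ lie in superattracting cycles; you go further and correctly flag the local-degree-one case as the obstacle.

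That obstacle is real, and neither of your fixes removes it---indeed the containment (and hence the lemma with a uniform $t$) fails outright in the paper's own Example~\ref{cubicexample}. There $P_\calF=\{0,1\}$, $g(0)=1$, and a direct computation gives $g'(0)=-9$. The three cusps of $\overline{\Gamma}(2)$ have equal width, so $U_0(t)$ and $U_1(t)$ are, to first order, Euclidean disks of the \emph{same} radius $r(t)\sim 16e^{-\pi/t}$ about $0$ and $1$; hence $g(U_0(t))$ has radius $\approx 9r(t)$ and strictly overflows $U_1(t)$ for every small $t$. Since $\pi\circ\sigma_f$ is the universal cover of $\hC\setminus f^{-1}(P_f)$ (in particular surjective), one can then pick $z_0\in U_0(t)\setminus f^{-1}(P_f)$ with $f(z_0)\notin U_1(t)$ and find $\tau$ with $\pi(\sigma_f(\tau))=z_0$, giving $\tau\in X$ but $\sigma_f(\tau)\notin X$. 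Your route (b) only yields $f(V_t^M)\subset W$, not $f(V_t^M)\subset V_t^M$; route (a) is closer in spirit, but the $U_p(t)$ are horocyclic for the hyperbolic metric on $\hC\setminus P_f$, not for the orbifold metric, so orbifold expansion does not act on them in the clean way you suggest. The honest repair is to abandon the uniform parameter: choose cusp-dependent radii $t_p$ for $p\in P_\calF$ so that the genuine contraction at the critical step of each superattracting cycle absorbs the finite expansion factors $|f'(p)|$ at the non-critical steps, and then run the remainder of the argument with $B_{t_p}(r)$ in place of $B_t(r)$.
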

\begin{proof}

Consider the fundamental diagram
\[
\begin{tikzcd}
	\bbH && \bbH \\
	\\
	{\hC\setminus P_f} && {\hC\setminus P_f.}
	\arrow["\pi"', from=1-1, to=3-1]
	\arrow["{\sigma_f}"', from=1-3, to=1-1]
	\arrow["f"', from=3-1, to=3-3]
	\arrow["\pi", from=1-3, to=3-3]
\end{tikzcd}
\]
If $P_\calF = \emptyset$, then $X=\bbH$ satisfies all the statements of the lemma. Thus suppose $P_\calF\neq\emptyset$, and put $n=|P_\calF|\leq 3$.

Fix some $t<1$, and for each $p_k\in P_\calF$ choose some corresponding Fatou cusp $r_k\in\calC_\calF$. Put $U_k = \pi(B_t(r_k))\cup\{p_k\}$, so that $U_k$ is an open neighborhood of $p_k$ in $\hC$. By shrinking $t$ (and hence the sets $U_k$) as necessary, we can make the following assumptions: (i) each $U_k\setminus \{p_k\}$ is evenly covered by $\pi$; (ii) the sets $U_k$ are pairwise disjoint topological disks whose closures in $\hC$ are disjoint from $P_\calJ$; and (iii) the collection satisfies
\[f(U_1\cup\dotsm\cup U_n)\subset U_1\cup\dotsm\cup U_n.\]
The first two claims are obvious, and the third follows from the fact that the points of $P_\calF = \{p_1,\dotsc, p_n\}$ all lie in superattracting cycles.

We will now show that, for each $k=1,\dotsc, n$, every component of $\pi^{-1}(U_k)$ has the form $B_t(r)$ for some $r\in\calC_\calF$. Indeed, let $B_1,B_2$ be two components of $\pi^{-1}(U_k\setminus\{p_k\})$. Then these components satisfy $g(B_1)=B_2$ for some deck transformation $g$ of $\pi$. On the other hand, $\Deck(\pi)= \overline{\Gamma}(2)\leq \PSL(2,\bbZ)$, so Lemma \ref{equalhoro} shows that since at least one component of $\pi^{-1}(U_k\setminus\{p_k\})$ has the claimed form, they all do. Conversely, every $B_t(r)$ for $r\in\calC_\calF$ is a component of some $\pi^{-1}(U_k\setminus\{p_k\})$. Thus we may put

\[X: = \bbH\setminus \bigg(\bigcup_{r\in\calC_\calF}B_t(r)\bigg).\]

The space $X$ has the property that
\[\pi(X)\cup P_\calJ = \hC\setminus (U_1\cup\dotsm\cup U_n).\]
This is a sphere minus a finite number of open topological disks containing each puncture of $\hC\setminus P_\calF$, and is hence compact in this space.

We now prove that $\sigma_f(X)\subset X$. Let $\tau\in X$. By construction if $f(z)\notin U_1\cup\dotsm\cup U_n$, then $z\notin U_1\cup\dotsm\cup U_n$. The projection satisfies $\pi(\tau)\notin U_1\cup\dotsm\cup U_n$, yet $\pi(\tau) = f(\pi(\sigma_f(\tau)))$, so $\pi(\sigma_f(x))\notin U_1\cup\dotsm\cup U_n$. It follows that $\sigma_f(\tau)\in X$. This completes the proof.
\end{proof}

Consider the orbifold metric $ds = \rho(z)\,|dz|$ introduced earlier in this section, which is defined on $\hC\setminus P_\calF$ and smooth on $\hC\setminus P_f$. We may pull this metric back to a metric $d\tilde{s} = \pi^*(ds) =: \trho(\tau)\,|d\tau|$ that is smooth on $\bbH$. This metric is not in general the same as the standard hyperbolic metric. Indeed, $(\bbH, \trho)$ is incomplete, and some points on the boundary (specifically, the Julia cusps) are now a finite distance away from points in the interior. The completion of $\bbH$ with respect to $\trho$ is $\bbH\cup\calC_\calJ$.

\begin{lem}\label{sigmacontracts}
For all $\tau\in\bbH$, $\norm{d(\sigma_f)_\tau}_{\trho}<1$. Moreover, there is some $0\leq \alpha<1$ such that $\norm{d(\sigma_f)_\tau}_{\trho}\leq \alpha$ for all $\tau\in X$.
\end{lem}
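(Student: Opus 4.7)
The plan is to differentiate the fundamental commutative diagram from Theorem~\ref{maindiagram} and exploit the fact that $\trho$ is, by construction, the pullback of $\rho$ under $\pi$. By that theorem the relation $f\circ\pi\circ\sigma_f=\pi$ holds on $\bbH$. Fix $\tau\in\bbH$ and set $\tau'=\sigma_f(\tau)$, $z'=\pi(\tau')$, so that $f(z')=\pi(\tau)$. Applying the chain rule yields
\[
d\pi_\tau \;=\; df_{z'}\circ d\pi_{\tau'}\circ d(\sigma_f)_\tau.
\]
Because $\trho=\pi^{*}\rho$ and all the maps in sight are holomorphic between $1$-dimensional complex manifolds, the linear map $d\pi_\tau\colon(T_\tau\bbH,\trho)\to(T_{\pi(\tau)}\hC,\rho)$ is a complex-linear isometry, and similarly for $d\pi_{\tau'}$; hence both sides of the identity have unit $\rho$-norm on tangent vectors, and we can solve for $d(\sigma_f)_\tau$ to obtain
\[
\lVert d(\sigma_f)_\tau\rVert_{\trho}\;=\;\frac{1}{\lVert df_{z'}\rVert_{\rho}}.
\]

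For the first assertion, note that $z'=\pi(\tau')\in\hC\setminus P_f$ and $f(z')=\pi(\tau)\in\hC\setminus P_f\subset\hC\setminus P_\calF$, so the defining expansion property of the orbifold metric from Section~\ref{orbifoldsubsec} gives $\lVert df_{z'}\rVert_\rho>1$. Thus $\lVert d(\sigma_f)_\tau\rVert_{\trho}<1$, proving the pointwise strict contraction on all of $\bbH$.

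For the uniform bound on $X$, set $K=\pi(X)\cup P_\calJ$, which by the previous lemma is compact in $\hC\setminus P_\calF$. Since $\sigma_f(X)\subset X$, for $\tau\in X$ we have $\tau'=\sigma_f(\tau)\in X$, so $z'=\pi(\tau')\in\pi(X)\subset K$ and $f(z')=\pi(\tau)\in\pi(X)\subset K$; in particular $z'\in f^{-1}(K)$. The second property of $\rho$ recorded in Section~\ref{orbifoldsubsec} then supplies a constant $c_K>1$ with $\lVert df_{z'}\rVert_\rho\geq c_K$ for every such $z'$, whence
\[
\lVert d(\sigma_f)_\tau\rVert_{\trho}\;\leq\;\frac{1}{c_K}\;=:\;\alpha\;<\;1
\]
uniformly for $\tau\in X$.

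The only subtlety I expect is book-keeping with the orbifold metric near branch points of $f$: one must be sure that $z'$ lies in the domain where $\lVert df_{z'}\rVert_\rho$ is well defined and the uniform expansion statement on $f^{-1}(K)$ is applicable. This is handled by observing that $z'\in\pi(X)\subset\hC\setminus P_f$, so $\rho$ is smooth at $z'$, while the choice of orbifold (modified when $(\hC,\nu_f)$ is Euclidean as in Section~\ref{orbifoldsubsec}) guarantees that the expansion estimate is uniform over all of the compact set $f^{-1}(K)$, not merely its regular part. The rest is essentially forced by conformality.
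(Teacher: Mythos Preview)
Your proof is correct and follows essentially the same approach as the paper's: both derive the identity $\lVert d(\sigma_f)_\tau\rVert_{\trho}=\lVert df_{z'}\rVert_\rho^{-1}$ from the induced tangent-space diagram and the fact that $\pi$ is a local isometry for $\trho=\pi^*\rho$, and then appeal to the pointwise and uniform expansion properties of $\rho$ from Section~\ref{orbifoldsubsec}. You have simply spelled out the chain rule and the compactness argument in more detail than the paper does.
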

\begin{proof}
Again recall our fundamental diagram, which we now view as a diagram of Riemannian manifolds:
\[
\begin{tikzcd}
	(\bbH,\trho) && (\bbH,\trho) \\
	\\
	{(\hC\setminus P_f, \rho)} && {(\hC\setminus P_f,\rho).}
	\arrow["\pi"', from=1-1, to=3-1]
	\arrow["{\sigma_f}"', from=1-3, to=1-1]
	\arrow["f"', from=3-1, to=3-3]
	\arrow["\pi", from=1-3, to=3-3]
\end{tikzcd}
\]
Fix some $\tau\in\bbH$, and put $\tau'=\sigma_f(\tau)$, $z=\pi(\tau')$ and $w=\pi(\tau)$. We get an induced diagram on tangent spaces:
\[
\begin{tikzcd}
	T_{\tau'}\bbH && T_\tau\bbH \\
	\\
	{T_z(\hC\setminus P_f)} && {T_w(\hC\setminus P_f).}
	\arrow[from=1-1, to=3-1]
	\arrow["{d(\sigma_f)_\tau}"', from=1-3, to=1-1]
	\arrow["{df_z}"', from=3-1, to=3-3]
	\arrow[from=1-3, to=3-3]
\end{tikzcd}
\]
Going around this diagram it is easy to see $\norm{d(\sigma_f)_\tau}_{\trho}=\norm{df_z}_\rho^{-1}$, so the first claim follows. The second claim is a consequence of the fact that $\pi(X)\cup P_\calJ$ is compact in $\hC\setminus P_\calF$.
\end{proof}

Let $\bbH^* = \bbH\cup\calC_\calJ$ denote the completion of $\bbH$ with respect to $\trho$. For any smooth curve $\gamma\from[0,1]\to \bbH^*$ we may write
\[l_{\trho}(\gamma) = \int_\gamma \trho(\tau)\,|d\tau|.\]

Note that if $\gamma$ is a smooth path in $X^* = X\cup\calC_\calJ$ and $\gamma' = \sigma(\gamma)$, then $l_\trho(\gamma')\leq \alpha l_\trho(\gamma)$ by Lemma \ref{sigmacontracts}. We also know that $\gamma'\subset X^*$ since $X$ was chosen to be $\sigma_f$-invariant. With these nice properties in mind, consider the following path metric induced by $\trho$ on $X^*$:
\[d_{X^*}(x_1,x_2) = \inf\{l_\trho(\gamma): \gamma\from[0,1]\to X^* \text{ is a smooth path connecting $x_1$ to $x_2$}\}.\]
This metric is well-defined since $X^*=X\cup\calC_\calJ$ is path-connected, and by construction $\sigma_f$ is uniformly contracting on $(X^*,d_{X^*})$.

In the interest of avoiding notational clutter, we will simply write $d(\cdot,\cdot) = d_{X^*}(\cdot,\cdot)$ for the metric on $X^*$, and $l(\cdot) = l_{\trho}(\cdot)$ for lengths as measured by $\trho$ for the remainder of this section.

Let $\delta_0 = 1/(\min \lambda_f(s))$ where $\lambda_f$ is the Thurston multiplier associated to pulling back a curve with rational slope $s$ by $f$. The minimum is taken over all curves with essential pullback, so $\lambda_f\neq 0$. There are only finitely many possible multiplier values, so the minimum indeed exists and $\delta_0$ is a positive finite number.

\begin{lem}
For $1\leq \delta\leq\delta_0$ and $t>0$ sufficiently small, there is some $c\geq 0$ depending only on $f$ so that
\[d(\partial B_t(r), \partial B_{\delta t}(r))\leq c\ln|\delta|\]
for all Fatou cusps $r\in\hQ$.
\end{lem}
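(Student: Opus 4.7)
The approach will be to compare the conformal factor $\trho$ to the standard hyperbolic factor $\rho_0(\tau) := 1/\Im\tau$ on $\bbH$. The key observation is that the hyperbolic distance between the boundary horocycles of the concentric horoballs $B_t(r) \subset B_{\delta t}(r)$ is exactly $\ln\delta$, so it will suffice to establish a global pointwise bound $\trho \leq c\,\rho_0$ on $\bbH$ for some constant $c$ depending only on $f$, and then integrate along the hyperbolic geodesic realizing this distance.

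First, I will observe that the ratio $\phi(\tau) := \trho(\tau)/\rho_0(\tau)$ is $\Deck(\pi)$-invariant (since both $\trho$ and $\rho_0$ are pulled back from metrics on the three-punctured sphere by $\pi$), and therefore descends to the smooth positive function $\phi = \rho/\rho_{3P}$ on $\hC \setminus P_f$, where $\rho_{3P}$ denotes the conformal factor of the complete hyperbolic metric on the three-punctured sphere $\hC \setminus P_f$. The core of the argument will be to show that $\phi$ extends continuously to the compact space $\hC$. At a Fatou postcritical point $p \in P_\calF$, both $\rho$ and $\rho_{3P}$ are complete with cusp-type singularities of the form $|dz|/(|z-p|\ln(1/|z-p|))$ up to smooth positive factors, since $p$ is a puncture both in the orbifold $(\hC,\nu)$ of Section \ref{orbifoldsubsec} and in the three-punctured sphere; thus $\phi$ will extend to a positive finite value at $p$. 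At a Julia postcritical point $q \in P_\calJ$, by contrast, $\rho$ has a conical singularity behaving like $|z-q|^{1/\nu(q)-1}|dz|$ while $\rho_{3P}$ retains cusp behavior, so $\phi(z) \sim |z-q|^{1/\nu(q)}\ln(1/|z-q|) \to 0$ as $z \to q$.

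With this continuous extension in hand, $\phi$ attains a finite maximum $c \geq 0$ on the compact space $\hC$, yielding $\trho \leq c\,\rho_0$ pointwise on $\bbH$. For $t > 0$ small enough that $\delta_0 t < 1$, the argument following Lemma \ref{equalhoro} ensures the family $\{B_{\delta_0 t}(r) : r \in \hQ\}$ is pairwise disjoint, so that for every Fatou cusp $r$ and every $1 \leq \delta \leq \delta_0$, the annular region $B_{\delta t}(r) \setminus B_t(r)$ is contained in $X$. The hyperbolic geodesic from $\partial B_t(r)$ to $\partial B_{\delta t}(r)$ remains in this annular region and has $\rho_0$-length $\ln\delta$, so its $\trho$-length is at most $c\ln\delta$, giving the desired bound $d(\partial B_t(r), \partial B_{\delta t}(r)) \leq c\ln|\delta|$ with $c$ uniform across all Fatou cusps.

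The main technical obstacle will be justifying the claim that the orbifold metric $\rho$ has the correct cusp asymptotic at Fatou postcritical points, with normalization matching $\rho_{3P}$ up to a smooth positive factor. This relies on the standard local description of complete hyperbolic orbifold metrics near punctures, which can be extracted from references such as \cite[Appendix A.10]{bmexpanding} or \cite[Appendix E]{milnorcd}. A secondary point to verify carefully is that the hyperbolic geodesic segment connecting the two horocycles indeed stays inside the annular region (which follows from the fact that, after a $\PSL(2,\bbR)$-normalization sending $r$ to $\infty$, this geodesic is simply a vertical line segment).
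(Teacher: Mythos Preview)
Your approach is correct and close in spirit to the paper's: both hinge on comparing the orbifold metric to the complete hyperbolic metric near Fatou cusps, combined with the explicit hyperbolic distance $\ln\delta$ between the concentric horocycles. The paper proceeds locally---it projects the horoball annulus via $\pi$ into a punctured-disk neighborhood of the Fatou postcritical point and invokes the standard comparability of $\rho$ with the punctured-disk hyperbolic metric there (citing \cite[Proposition~A.33]{bmexpanding}); the ``sufficiently small $t$'' hypothesis is what forces the annulus into this neighborhood. You instead establish a \emph{global} bound $\trho\le c\,\rho_0$ on all of $\bbH$ by descending the ratio to $\hC\setminus P_f$ and analyzing its behavior at each postcritical point. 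Your route proves slightly more than is needed, but has the advantage that the constant $c$ is manifestly uniform and the smallness of $t$ is only used for the disjointness of the $B_{\delta_0 t}(r)$.

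One small caveat: the claim that $\phi=\rho/\rho_{3P}$ extends \emph{continuously} to each $p\in P_\calF$ is stronger than required and depends on the two cusp expansions matching to leading order, which you have not justified. All you actually need is that $\phi$ is \emph{bounded} near each Fatou postcritical point, and this follows immediately from the standard fact that every complete hyperbolic metric near a puncture is two-sided comparable to $|dz|/\bigl(|z-p|\ln(1/|z-p|)\bigr)$. With that weakening your argument goes through unchanged.
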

\begin{proof}
We just provide a sketch of the proof. Take a path $\gamma$ connecting points of $\partial B_t(r)$ and $\partial B_{t\delta}(r)$, and then project down to $\hC\setminus P_f$ by $\pi$. Near Fatou postcritical points the Riemannian metric $\rho$ is comparable to the standard hyperbolic metric in a punctured disk where the Fatou postcritical point corresponds to the puncture (see, e.g., \cite[Proposition A.33]{bmexpanding}). By choosing $t>0$ sufficiently small we may ensure all horoballs $B_{\delta_0t}$ project into these neighhorhoods. The horocycles project to concentric circles in this local punctured disk, and it is not hard to calculate the hyperbolic distance between the concentric circles as $|\ln(\delta)|$.
\end{proof}

We now have everything in place to prove the following two key estimates:

\begin{lem}\label{keyineqjulia}
For all Julia cusps $r,r'\in\hQ$ with $\sigma_f(r)=r'$,
\[d(\tau_0,r')\leq\alpha d(\tau_0,r),\]
where $\alpha$ is as in Lemma \ref{sigmacontracts}.
\end{lem}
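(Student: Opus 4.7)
The plan is to show that $\sigma_f$ acts as an $\alpha$-Lipschitz self-map of the path metric space $(X^*, d)$; the desired inequality then follows at once upon applying this estimate to the pair $\tau_0, r$ and using $\sigma_f(\tau_0)=\tau_0$ together with $\sigma_f(r)=r'$.

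The first thing I would verify is that $\sigma_f(X^*)\subset X^*$. On the open part this is built into the definition of $X$. On the cusp boundary, the hypothesis $\sigma_f(r)=r'\in\hQ$ says that $r$ has essential pullback, and Lemma \ref{cuspsorting} then promotes the Julia cusp $r$ to another Julia cusp $r'\in\calC_\calJ\subset X^*$, so $\sigma_f$ does indeed carry the relevant part of $X^*$ back into $X^*$.

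Next I would estimate lengths of image paths. Take any piecewise smooth $\gamma\colon[0,1]\to X^*$ with $\gamma(0)=\tau_0$, $\gamma(1)=r$, and $\gamma((0,1))\subset X$, and let $\widetilde{\gamma}:=\sigma_f\circ\gamma$. The interior of $\widetilde{\gamma}$ lies in $X$ by the previous paragraph, and the pointwise estimate $\norm{d(\sigma_f)_\tau}_{\trho}\leq\alpha$ on $X$ from Lemma \ref{sigmacontracts}, applied under the integral defining $l(\widetilde{\gamma})$, yields $l(\widetilde{\gamma})\leq \alpha\, l(\gamma)$. Infimizing over such paths $\gamma$ gives $d(\tau_0,r')\leq \alpha\, d(\tau_0,r)$, which is what we want.

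The subtle point, and the only real obstacle, sits at the endpoint $t=1$: one must confirm that $\widetilde{\gamma}$ extends continuously through the cusp $r'$ so that it is a legitimate path in $X^*$ terminating at $r'$, and that the length estimate survives the improper integral as $t\to 1^-$. Continuity at the cusp is provided by the extension of $\sigma_f$ to the Weil--Petersson completion of $\calT_A$ recalled at the start of this section. Finiteness of $l(\gamma)$ (and hence of $l(\widetilde{\gamma})$) near the endpoint is exactly the statement that $r,r'\in\calC_\calJ$ lie in the $\trho$-completion of $X$, which was designed precisely to contain the Julia cusps at finite distance. One should also check that near-minimizing paths can be chosen with their interiors in $X$ rather than passing through other cusps, but this is a routine perturbation argument since $\calC_\calJ$ is a countable subset of the boundary of $X^*$.
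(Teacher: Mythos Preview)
Your proposal is correct and follows essentially the same approach as the paper: the text immediately preceding the lemma already notes that for any smooth path $\gamma$ in $X^*$ one has $l(\sigma_f\circ\gamma)\leq\alpha\,l(\gamma)$ and $\sigma_f(\gamma)\subset X^*$, after which the paper's proof simply says the inequality is ``immediate from the construction of the metric $d$.'' You have just unpacked this one-liner, spelling out the endpoint issues at the Julia cusp and the role of $\sigma_f(\tau_0)=\tau_0$.
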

\begin{proof}
This is immediate from the construction of the metric $d$.
\end{proof}

\begin{lem}\label{keyineqfatou}
There is some constant $C\geq 0$ depending only on $f$ such that, for all Fatou cusps $r,r'\in\hQ$ with $\sigma_f(r)=r'$,
\[d\big(\tau_0,\partial B_t(r')\big)\leq \alpha d\big(\tau_0,\partial B_t(r)\big)+C.\]
\end{lem}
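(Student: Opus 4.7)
The plan is to push forward a near-optimal path from $\tau_0$ to $\partial B_t(r)$ by $\sigma_f$ and then extend the image to reach $\partial B_t(r')$. For any $\epsilon>0$, I would first choose a smooth path $\gamma \colon [0,1] \to X^*$ with $\gamma(0) = \tau_0$ and $\gamma(1) = p \in \partial B_t(r)$ satisfying $l(\gamma) < d(\tau_0, \partial B_t(r)) + \epsilon$; such a path exists since $\partial B_t(r) \subset X \subset X^*$.

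Next I would consider the image path $\gamma' := \sigma_f \circ \gamma$. Because $\sigma_f$ fixes $\tau_0$, sends $X$ into $X$, and extends continuously to $\hQ$, the path $\gamma'$ lies in $X^*$, starts at $\tau_0$, ends at $\sigma_f(p)$, and satisfies $l(\gamma') \leq \alpha\, l(\gamma)$ by Lemma \ref{sigmacontracts}. By Lemma \ref{horoballin}, $\sigma_f(p) \in \overline{B_{\delta t}(r')}$ where $\delta := \delta(r)$. Since $p \in X$ forces $\sigma_f(p) \in X$ and $X \cap B_t(r') = \emptyset$ by construction, we must have $\sigma_f(p) \in \overline{B_{\delta t}(r')} \setminus B_t(r')$; this in particular rules out $\delta < 1$, for then $\overline{B_{\delta t}(r')} \subset B_t(r')$ in $\bbH$ and the two conditions on $\sigma_f(p)$ would be incompatible. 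Hence $1 \leq \delta \leq \delta_0$, and $\sigma_f(p)$ sits in the annular region between $\partial B_t(r')$ and $\partial B_{\delta t}(r')$.

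I would then apply the previous lemma to obtain a path in $X^*$ from $\sigma_f(p)$ to $\partial B_t(r')$ of length at most $c \ln \delta \leq c \ln \delta_0$. Concatenating with $\gamma'$ yields a path in $X^*$ from $\tau_0$ to $\partial B_t(r')$ of total length at most $\alpha l(\gamma) + c \ln \delta_0 < \alpha(d(\tau_0, \partial B_t(r)) + \epsilon) + c \ln \delta_0$. Taking $\epsilon \to 0$ and setting $C := c \ln \delta_0$ yields the claimed estimate. The main subtlety I would need to verify carefully is that both $\gamma'$ and the short connecting path actually remain in $X^*$: the former is handled by the forward-invariance $\sigma_f(X) \subset X$, while for the latter I would lean on the fact that the previous lemma's estimate is proved by projecting via $\pi$ to a punctured-disk neighborhood of $\pi(r') \in P_{\calF}$ where $\rho$ is comparable to the standard hyperbolic metric, so the connecting path can be constructed explicitly in that local chart and then lifted into $X$.
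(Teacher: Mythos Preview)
Your proof is correct and follows essentially the same strategy as the paper's: push a near-optimal path forward by $\sigma_f$ and then bridge to $\partial B_t(r')$ using the horocycle-distance estimate from the previous lemma. The one notable difference is that you exploit the forward-invariance $\sigma_f(X)\subset X$ to deduce $\delta\geq 1$ up front, collapsing the paper's two cases ($\delta\leq 1$ and $\delta>1$) into one; the paper treats $\delta\leq 1$ separately, but your observation shows that case is essentially vacuous.
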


\begin{proof}
Let $r\in\hQ$ be a Fatou cusp such that $\sigma_f(r)\in\hQ$. Then $r' = \sigma_f(r)$ is also a Fatou cusp. Let $\epsilon>0$, and take a path $\gamma$ from $\tau_0$ to some point on $\partial B_t(r)$ such that $l(\gamma)<d(\tau_0,\partial B_t(r))+\epsilon$.

Note that for $\delta := \delta(r)$, we have
\[\sigma_f(B_t(r))\subset B_{\delta t}(r').\]
Put $\gamma' = \sigma_f(\gamma)$. There are two cases to consider:

{\em Case 1}: $\delta\leq 1$. In this case $B_{\delta t}(r')\subset B_{t}(r')$, and thus $\sigma_f(B_t(r))\subset B_t(r')$. We then get
\[d\big(\tau_0, \partial B_t(r')\big)\leq d\big(\tau_0, \sigma_f(\partial B_t(r))\big)\leq l(\gamma')\leq \alpha l(\gamma) < \alpha \big(d(\tau_0,\partial B_t(r))+\epsilon\big).\]
Since
\[d\big(\tau_0, \partial B_t(r')\big)\leq \alpha \big(d(\tau_0,\partial B_t(r))+\epsilon\big)\]
is true for all $\epsilon>0$, we have the desired inequality for any choice of $C$.

{\em Case 2}: $\delta>1$. Then
\begin{align*}
d(\tau_0,\partial B_t(r')&\leq l(\gamma')+d(\partial B_{\delta t}(r'), \partial B_t(r')) \\
&\leq \alpha l(\gamma)+d(\partial B_{\delta t}(r'), \partial B_t(r')).
\end{align*}
On the other hand, $d(\partial B_{\delta t}(r'), \partial B_t(r')) \lesssim |\ln(\delta)|$.

\begin{figure}[H]
\centering
\includegraphics[height=10em]{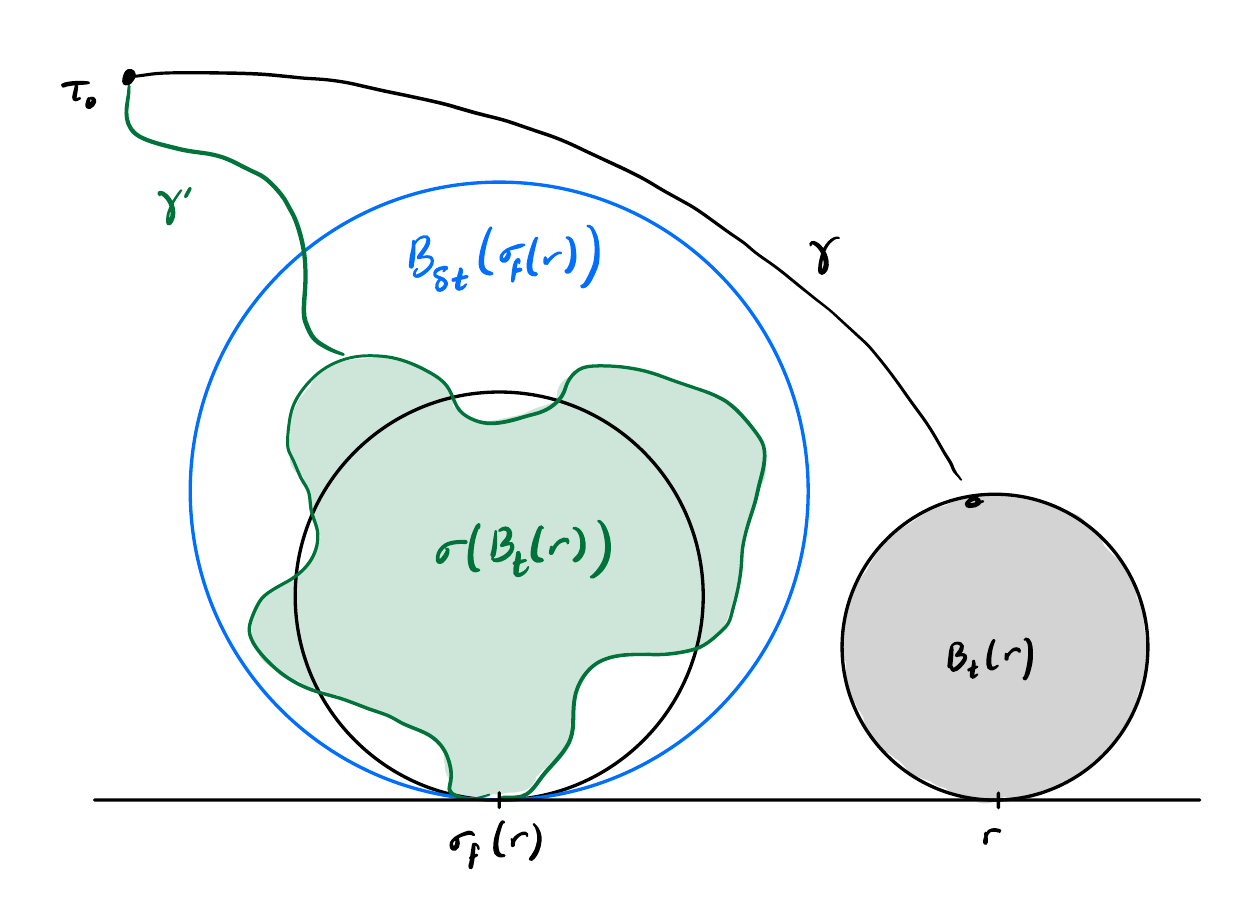}
\caption{Generic picture for $\delta>1$ case.}
\end{figure}

Since $\delta(r) = 1/\lambda_f(-1/r)$ where $\lambda_f$ is the Thurston multiplier, there are only finitely many values it can take. Thus there is a constant $C>0$ so that $|\ln(\delta(r))|\leq C$ for all Fatou cusps $r\in\hQ$ with essential pullback. It follows that
\[d\big(\tau_0,\partial B_m(r')\big)\leq \alpha (d\big(\tau_0,\partial B_t(r)\big)+\epsilon)+C\]
for all $\epsilon>0$, so we get the desired inequality in this case as well.
\end{proof}

\begin{lem}\label{seqlemma}
Let $(x_n) = (x_0, x_1,x_2,\dotsc)$ be a sequence of nonnegative real numbers with the property that there is some $\alpha\in (0,1)$ and $C\geq 0$ for which
\[x_n\leq \alpha x_{n-1}+C\]
for all $n\geq 1$. Then for each $\epsilon>0$ there is an $N$ so that
\[x_n\leq\frac{C}{1-\alpha}+\epsilon\]
for all $n\geq N$.
\end{lem}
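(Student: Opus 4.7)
The plan is to iterate the one-step recursive inequality and then pass to the limit. First I would prove by induction on $n$ that
\[
x_n \leq \alpha^n x_0 + C\sum_{k=0}^{n-1}\alpha^k = \alpha^n x_0 + C\cdot\frac{1-\alpha^n}{1-\alpha}
\]
for all $n\geq 0$. The base case $n=0$ is trivial. For the inductive step, assuming the bound holds at stage $n-1$, the hypothesis $x_n\leq \alpha x_{n-1}+C$ yields
\[
x_n \leq \alpha\left(\alpha^{n-1}x_0+C\cdot\frac{1-\alpha^{n-1}}{1-\alpha}\right)+C = \alpha^n x_0+C\cdot\frac{\alpha-\alpha^n}{1-\alpha}+C,
\]
and combining the last two terms over the common denominator $1-\alpha$ gives $C\cdot\frac{1-\alpha^n}{1-\alpha}$, as desired.

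Since the partial geometric sum is bounded above by the full sum, I can weaken this to the cleaner bound
\[
x_n \leq \alpha^n x_0 + \frac{C}{1-\alpha}
\]
for every $n\geq 0$.

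Because $0<\alpha<1$, we have $\alpha^n\to 0$ as $n\to\infty$. Given $\epsilon>0$, choose $N$ large enough that $\alpha^N x_0 \leq \epsilon$ (for instance, any $N \geq \log(\epsilon/\max(x_0,1))/\log\alpha$ works, interpreting $x_0=0$ trivially). Then for all $n\geq N$, $\alpha^n x_0 \leq \alpha^N x_0 \leq \epsilon$, so
\[
x_n \leq \frac{C}{1-\alpha}+\epsilon,
\]
which is the claimed bound. There is no real obstacle here: the lemma is the standard fact that a contractive affine recursion converges to its unique fixed point $C/(1-\alpha)$, and the argument is a two-line induction plus the geometric-series limit. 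The only thing to be careful about is that the nonnegativity of $x_n$ is never actually needed in the argument beyond guaranteeing that the stated upper bounds are meaningful.
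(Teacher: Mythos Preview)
Your proof is correct and follows essentially the same approach as the paper: iterate the recursive inequality to obtain $x_n \leq \alpha^n x_0 + C\sum_{k=0}^{n-1}\alpha^k$, then use $\alpha^n\to 0$. The only cosmetic difference is that the paper rewrites the bound as $\frac{C}{1-\alpha}+\alpha^n\bigl|x_0-\frac{C}{1-\alpha}\bigr|$ before letting $n\to\infty$, whereas you more simply bound the partial geometric sum by the full sum.
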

\begin{proof}
Observe that
\[x_2\leq \alpha x_1+C\leq  \alpha(\alpha x_{0}+C)+C = \alpha^2 x_{0}+\alpha C+C.\]
Continuing inductively, we get
\begin{align*}
x_{n}\leq \alpha^n x_0+C\sum_{k=0}^{n-1}\alpha^k &= \alpha^nx_0+C\frac{1-\alpha^n}{1-\alpha} = \frac{C}{1-\alpha}+\alpha^n\left(x_0 - \frac{C}{1-\alpha}\right)
\end{align*}
for $n\geq 1$. Thus, after taking absolute values we have the estimate
\[
x_n \leq \frac{C}{1-\alpha}+\alpha^n\left|x_0-\frac{C}{1-\alpha}\right|.\]
Since $\alpha^n\to 0$ as $n\to\infty$, the claimed result follows.
\end{proof}

\begin{lem}\label{locfinite}
A disjoint collection of nonempty open horoballs in $\bbH$ is locally finite.
\end{lem}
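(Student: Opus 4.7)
My approach is to argue by contradiction: assume some point $\tau_0 = x_0 + i y_0 \in \bbH$ has no neighborhood meeting only finitely many horoballs from the disjoint collection. Then there is a sequence of pairwise distinct horoballs $H_n$ together with points $\tau_n \in H_n$ satisfying $\tau_n \to \tau_0$. Two horoballs based at $\infty$ would be nested half-planes and therefore not disjoint, so at most one $H_n$ has base $\infty$; discarding it, I may assume each $H_n$ is based at a finite point $\xi_n \in \bbR$ and has Euclidean diameter $d_n > 0$.

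The key Euclidean estimate I would use is this: viewing $H_i$ as the open Euclidean disk of radius $d_i/2$ centered at $\xi_i + i d_i/2$, pairwise disjointness of two such horoballs is equivalent to
\[
(\xi_1 - \xi_2)^2 \geq d_1 d_2.
\]
Meanwhile, the inclusion $\tau_n \in H_n$ rewrites as $\Im \tau_n / |\tau_n - \xi_n|^2 > 1/d_n$, and for $n$ large with $\tau_n \to \tau_0$ this yields a lower bound of the form
\[
d_n \geq c_1 (\xi_n - x_0)^2 + c_2
\]
for positive constants $c_1, c_2$ depending only on $\tau_0$.

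The main step, and the only real obstacle, is then to rule out both $|\xi_n| \to \infty$ and $d_n \to \infty$. If $|\xi_n|$ were unbounded along a subsequence, the lower bound gives $d_n \gtrsim \xi_n^2$, and the disjointness inequality forces $(\xi_n - \xi_m)^2 \geq d_n d_m \gtrsim \xi_n^2 \xi_m^2$, which fails for $|\xi_n|, |\xi_m|$ large since $|\xi_n - \xi_m|$ grows at most linearly in the parameters. Hence $(\xi_n)$ is bounded. Fixing any index $m$, if $d_n \to \infty$ then $(\xi_n - \xi_m)^2 \geq d_n d_m \to \infty$, contradicting boundedness of $(\xi_n)$; so $(d_n)$ is bounded as well. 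Once both parameters are bounded, all $H_n$ lie in a common bounded Euclidean subset of $\bbH$, each with Euclidean area bounded below by $\pi c_2^2/4 > 0$, and pairwise disjointness inside a region of finite Euclidean area allows only finitely many such disks—contradicting that the $H_n$ are infinitely many and distinct. The two-parameter boundedness argument is the only delicate point; once it is in place the area-counting conclusion is immediate.
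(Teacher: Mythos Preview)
Your argument is correct, but it takes a noticeably more laborious route than the paper's. The paper transfers the problem to the Poincar\'e disk model $\bbD$, where every horoball is a Euclidean disk internally tangent to the unit circle. A compact set $K\subset\bbD$ sits inside some $D(0,r)$ with $r<1$, so any horoball meeting $K$ must have Euclidean diameter at least $1-r$; the area-counting step then finishes immediately, since $\bbD$ has total Euclidean area $\pi$. The change of model is what buys the paper its brevity: tangency to a \emph{bounded} boundary circle converts ``meets a compact set'' directly into a uniform lower bound on diameter, with no need to control basepoints at all.

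By staying in $\bbH$, you inherit the unboundedness of the real axis, which forces the two-stage argument you flag as the delicate point: first bounding $(\xi_n)$ via the interplay of the containment estimate $d_n \gtrsim (\xi_n - x_0)^2$ and the disjointness inequality $(\xi_n-\xi_m)^2 \ge d_n d_m$, then bounding $(d_n)$. This works, and your estimates are correct (the disjointness criterion $(\xi_1-\xi_2)^2 \ge d_1 d_2$ drops out of the center-distance calculation, and $\tau_n\to\tau_0$ gives $d_n > y_n + (x_n-\xi_n)^2/y_n$, yielding both $c_1$ and $c_2$). But the entire two-parameter step is an artifact of the model; in the disk it simply does not arise.
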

\begin{proof}
This result is easier to see in the Poincar\'{e} disk model of the hyperbolic plane. In this setting, a compact set $K$ in $\bbD$ must be contained inside some Euclidean disk $D(0,r)$ with $r<1$. Suppose that a horoball intersects $K$, and hence also $D(0,r)$. Then the diameter of the horoball is at least $1-r$, its Euclidean radius is at least $(1-r)/2$, and its Euclidean area is at least $\pi(1-r)^2/4$. Since $\bbD$ has finite Euclidean area $\pi$, there can be at most finitely many disjoint horoballs in the collection which intersect $K$. This completes the proof.
\end{proof}

\begin{thm}
The map $\sigma_f\from\bbH\to\bbH$ has a finite global cusp attractor. Moreover, every Julia cusp eventually becomes peripheral.
\end{thm}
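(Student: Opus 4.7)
The plan is to split into the two cusp types provided by Lemma \ref{cuspsorting}, handle each with a distinct argument, and then assemble the pieces into a single finite attractor.

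First, for the Julia cusps I would show that every orbit must leave $\hQ$ in finitely many steps, which gives the ``moreover'' clause. Suppose for contradiction that $r \in \calC_\calJ$ has $\sigma_f^n(r) \in \hQ$ for every $n$; by Lemma \ref{cuspsorting} each iterate lies in $\calC_\calJ$, so iterating Lemma \ref{keyineqjulia} yields $d(\tau_0, \sigma_f^n(r)) \leq \alpha^n d(\tau_0, r) \to 0$. The contradiction will come from a uniform positive lower bound on $d(\tau_0, s)$ across $s \in \calC_\calJ$: any rectifiable path in $X^*$ from $\tau_0$ to $s$ projects under $\pi$ to a path of equal $\rho$-length from $a$ to $\pi(s) \in P_\calJ$, so $d(\tau_0, s)$ is bounded below by the positive orbifold-metric distance $m_0$ from $a$ to the finite set $P_\calJ$. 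Hence $\sigma_f^N(r) \in \bbH$ for some finite $N$.

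Second, for Fatou cusps the contraction estimate has the wrong sign to force periodicity directly, so I would combine Lemma \ref{keyineqfatou} with Lemma \ref{seqlemma}. If $r \in \calC_\calF$ has $\sigma_f^n(r) \in \calC_\calF$ for all $n$, then setting $x_n := d(\tau_0, \partial B_t(\sigma_f^n(r)))$, the inequality $x_n \leq \alpha x_{n-1} + C$ together with Lemma \ref{seqlemma} give $x_n \leq M := C/(1-\alpha) + 1$ for all large $n$. I would then define
\[
\calA := \{\, r \in \calC_\calF : d(\tau_0, \partial B_t(r)) \leq M \,\}.
\]
A direct check using the explicit value of $M$ shows that if $r \in \calA$ and $\sigma_f(r) \in \calC_\calF$ then $\sigma_f(r) \in \calA$ as well, so $\calA$ already absorbs the tail of every Fatou orbit that stays in $\calC_\calF$.

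The main obstacle will be proving that $\calA$ is finite. My approach is to show that the closed $d$-ball $\overline{Y}_M := \{\tau \in X^* : d(\tau_0, \tau) \leq M\}$ is compact in $(X^*, d)$ via a Hopf--Rinow-type argument---$(X^*, d)$ is a complete, locally compact length space covering the compact orbifold $\pi(X) \cup P_\calJ$ by a local isometry---and then to invoke Lemma \ref{locfinite} applied to the pairwise disjoint family of Fatou horoballs $\{B_t(r) : r \in \calC_\calF\}$. Each $r \in \calA$ contributes a horocycle $\partial B_t(r)$ meeting $\overline{Y}_M$, and the subtle point is transferring compactness in the $d$-topology on $X^*$ to local finiteness in the Euclidean topology on $\bbH$, since the two topologies diverge at the Julia cusps in $\calC_\calJ \cap \overline{Y}_M$ sitting on $\hR$; one must check that the specific shape of the orbifold metric near a Julia postcritical point prevents infinitely many Fatou horoballs from accumulating there within the $d$-ball. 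Once $\calA$ is known to be finite, combining with the Julia case identifies $\calA$ as the desired finite global cusp attractor.
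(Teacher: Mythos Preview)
Your overall architecture matches the paper exactly: split by cusp type via Lemma~\ref{cuspsorting}, dispose of Julia cusps by the contraction of Lemma~\ref{keyineqjulia} against the positive lower bound $\min_{p\in P_\calJ} d_\rho(a,p)$, and for Fatou cusps feed Lemma~\ref{keyineqfatou} into Lemma~\ref{seqlemma} to trap the tail of the orbit in a fixed $d$-ball, then invoke Lemma~\ref{locfinite}. The forward-invariance check you add for $\calA$ is correct but unnecessary, since Lemma~\ref{seqlemma} already gives $x_n\le M$ for \emph{all} $n\ge N$.

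The one place you diverge from the paper is the finiteness of $\calA$, which you correctly identify as the delicate step. You propose to analyze the orbifold metric near Julia postcritical points to rule out accumulation of Fatou horoballs at Julia cusps inside the $d$-ball. That can be made to work, but the paper avoids this analysis with a one-line combinatorial trick you are missing: the entire family $\{B_t(r):r\in\hQ\}$ is pairwise disjoint for $t<1$, so Fatou horoballs are disjoint from Julia horoballs. Hence if one sets
\[
K'=K\setminus\bigcup_{r\in\calC_\calJ}B_t(r),
\]
then a Fatou horoball meets $K$ if and only if it meets $K'$. Now $K'$ is closed in the $d$-compact set $K$, hence $d$-compact, and it lies entirely in $X$ where the $d$-topology agrees with the standard $\bbH$-topology; so $K'$ is compact in $\bbH$ and Lemma~\ref{locfinite} applies directly. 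This sidesteps the whole topology-transfer issue you flagged.
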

\begin{proof}
Using Lemma \ref{cuspsorting}, we divide the proof in two by separately considering the pullback action on Julia cusps and Fatou cusps.

Let us begin with the Julia cusps. Let $r\in\hQ$ be a Julia cusp, and suppose for contradiction that $\sigma_f^n(r)$ is not peripheral for any $n\in\bbN$. Then
\[d(\tau_0,\sigma_f^n(r))\geq d_{\trho}(\tau_0,\sigma_f^n(r))) = d_\rho(a, \pi(\sigma_f^n(r)))\geq \min_{p\in P_\calJ} d_\rho(a,p)>0.\]
On the other hand, by Lemma \ref{keyineqjulia} we know
\[d(\tau_0,\sigma_f^n(r))\leq \alpha^n d(\tau_0, r)\to 0\]
as $n\to\infty$, a contradiction. Thus every Julia cusp is eventually peripheral under iteration by $\sigma_f$.

Now suppose $r\in\hQ$ is a Fatou cusp. If there is $n\in\bbN$ such that $\sigma_f^n(r)\in\bbH$, then we are done. Otherwise $\sigma_f^n(r)$ is a Fatou cusp for every $n\in\bbN$, and we can define the sequence
\[x_n = d(\tau_0,\partial B_t(\sigma_f^n(r))) \]
for all $n\geq 1$, and we put $x_0 = d(\tau_0,r)$. By Lemma \ref{keyineqfatou}, this sequence satisfies the hypotheses of Lemma \ref{seqlemma}. Thus there is some $N(r)$ so that
\[x_n := d(\tau_0,\partial B_t(\sigma_f^n(r))\leq \frac{C}{1-\alpha}+1\]
for all $n\geq N(r)$. Yet the set
\[K = \bigg\{\tau\in \bbH: d(\tau_0,\tau)\leq \frac{C}{1-\alpha}+1\bigg\}\]
is a compact subset of $X^* = X\cup \calC_\calJ$, and so it is not hard to see that that the set
\[K' = K\setminus \bigcup_{r\in\calC_\calJ}B_t(r)\]
is a compact subset of $\bbH$. It is clear that $B_t(r)$ for $r\in\calC_\calF$ intersects $K$ if and only if it intersects $K'$, so $K$ intersects only finitely many horoballs from the collection $\{B_t(r): r\in \calC_\calF\}$ by Lemma \ref{locfinite}. Let $\calA = \{r_1,\dotsc, r_k\}$ be the base cusps of this finite collection of horoballs. Then it must be the case $\sigma_f^n(r)\in \calA$ for $n\geq N(r)$. Since $C,\alpha$ and hence $\calA$ depend only on the map $f$ and not on the initial cusp $r$, the set $\calA$ is our desired finite cusp attractor.
\end{proof}

\begin{proof}[Proof of Theorem \ref{mainthm}]
By the equivalence between curve attractors and cusp attractors discussed at the beginning of the section, the above result establishes the theorem in the special case where $(f,A)$ has exactly three postcritical points $P_f=\{0,1,\infty\}$ and $f(a)=a$ is fixed. The case where $a$ is not fixed is handled by statement (i) in Theorem \ref{maindiagram}; if $\sigma_f$ is simply a constant then every curve has peripheral pullback.

Since every rational Thurston map has at least two postcritical points, the only remaining case to consider is when there are exactly two postcritical points. If $f(B)\subset B$ for $B = \{0,1,\infty\}$ still, then the proof goes through with minimal changes. As before, we can assume $f(a)=a$ for otherwise $\sigma_f$ is constant. We then treat $b\in B\setminus P_f$ as a Julia postcritical point, and when we construct the altered ramification function $\nu\from\hC\to\bbN$ which gives rise to our metric $\rho$, we will put $\nu(b)=2$ to ensure hyperbolicity of the orbifold.

Now consider the case where $f$ has two postcritical points but does not satisfy $f(B)\subset B$. Then there is a single point $b\in B\setminus P_f$ so that $f(b)=a$. If $f(a)=a$, then we may simply swap the labels on $a$ and $b$ so that $f(B)\subset B$ and the previous case applies. If, on the other hand, $f(a)\in B$, then $f^2(B)\subset B$ and we replace $f$ with $f^2$ in the proof. This will be sufficient for the theorem because $(f,A)$ has a finite global curve attractor if and only if $(f^k,A)$ does for each iterate $k\in\bbN$.
\end{proof}

\subsection{Examples}
In this section we give some examples of Thurston maps for which our main result applies.

\begin{exmp}\label{Lexample}
Consider a Thurston map with the following combinatorial description: glue two copies of the unit square together along their boundary to obtain a ``pillow'' as depicted in the figure on the right. This is a topological $2$-sphere. Choose the four corners $\{a,b,c,d\}$ as a marking set $A$. On the left, we have a tiled surface which is also a topological $2$-sphere. We define a Thurston map $g$ from the left sphere to the right sphere by mapping the white tile on the left to the front face of the pillow on the right, and then extending by Schwarz reflection. Thus the grey tiles map to the back face of the pillow while the white tiles map to the front face of the pillow.
\vspace{-1em}
\begin{figure}[H]
\centering
\begin{tikzpicture}
	\node at (0,0) {\includegraphics[height=13.65em]{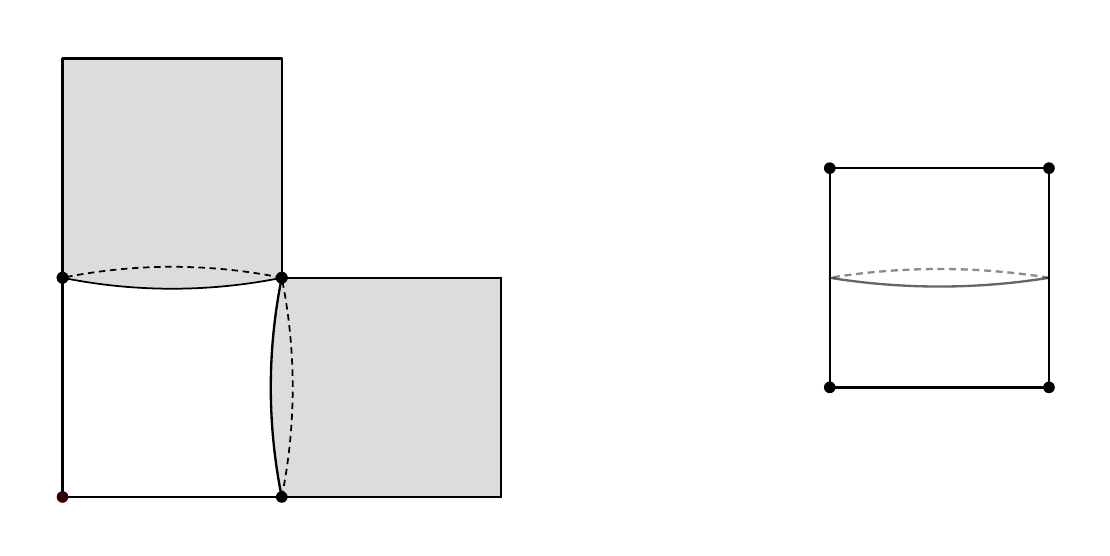}};
	\draw[->](0,0) to node[midway,above] {$f$} (2,0);
	\node at (-4.5, -2.2) {$a$};
	\node at (-2.2, -2.2) {$b$};
	\node at (-2.2, 0.2) {$c$};
	\node at (-4.5, 0.2) {$d$};
	
	\node at (2.2, -1.15) {$a$};
	\node at (4.45, -1.15) {$b$};
	\node at (4.45, 1.15) {$c$};
	\node at (2.2, 1.15) {$d$};
\end{tikzpicture}
\caption{The map $(g,A)$ of Example \ref{Lexample}}
\end{figure}

The map $g$ is a degree $3$ Thurston map with three fixed postcritical points $P_g = \{b,c,d\}$ and a non-postcritical fixed point $a$. One can show this map has no Thurston obstruction by using the intersection number techniques of \cite{bhi} (see, in particular, the proofs of Lemma 5.1 and Theorem 5.2 of this paper). Since its orbifold signature is not $(2,2,2,2)$, the marked variant of Thurston's theorem implies $(g,A)$ is combinatorially equivalent to a rational map (see \cite[Theorem 2.1]{markedthurston}). Hence $g$ satisfies the hypotheses of Theorem \ref{mainthm} and has a finite global curve attractor.
\end{exmp}

\begin{exmp}\label{cubicexample}
Let us now consider the rational map
\[g(z) = -\frac{(z-1)^3}{(3z+1)^2}.\]
This map has dynamical portrait

\[\begin{tikzcd}
	{-3} & 1 & 0 & {-\frac{1}{3}} & \infty\ar[loop, out=-30, in=30, distance=20]
	\arrow["{2:1}", from=1-1, to=1-2]
	\arrow["{3:1}",bend left, from=1-2, to=1-3]
	\arrow[from=1-3, bend left, to=1-2]
	\arrow["{2:1}", from=1-4, to=1-5]
\end{tikzcd}\]
The postcritical set is $P_g = \{0,1,\infty\}$ and $g$ also has fixed points $\frac{1}{5}, -\frac{1}{4}\pm i\frac{\sqrt{7}}{4}$, so after selecting $a$ from this three options, $(g,A)$ with $A = \{0,1,\infty,a\}$ satisfies the theorem and has a finite global curve attractor.

Note that $P_\calF = \{0,1\}$ and $P_\calJ = \{\infty\}$ for this particular map, so it possesses a mixture of attracting and repelling dynamics on moduli space. Accordingly this map cannot be shown to have an attractor by means of \cite[Theorem 7.2]{KPSlifts}, which we present and discuss in Section \ref{futuresection}.
\end{exmp}

\section{Extensions to Four Postcritical Points}\label{extensionsection}

In this section we describe how the main theorem can be extended to rational maps which may have as many as four postcritical points. The key observation is the following (somewhat obvious) fact:
\begin{prop}\label{compattractor}
Suppose $(g,A)$ is a rational map which has a finite global curve attractor. If $(h,A)$ is any rational map for which $\sigma_h$ is the identity, then the compositions $h\circ g$ and $g\circ h$ both have finite global curve attractors.
\end{prop}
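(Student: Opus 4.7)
The plan is to observe that both $h \circ g$ and $g \circ h$ induce exactly the same Thurston pullback map on $\calT_A$ as $g$ itself does, so that the finite global curve attractor for $(g,A)$ transfers directly to the compositions. The key tool is the composition law for pullback maps (Proposition \ref{pullbackcomp}) together with the equivalence, recalled at the beginning of Section \ref{gcasection}, between a finite global curve attractor and a finite global cusp attractor for $\sigma_f$ in the four-marked-point setting.

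First, I would view each of $g$, $h$, and their compositions as admissible branched covers $(S^2,A) \to (S^2,A)$ in the sense of Definition \ref{admissiblecover}. This is automatic, since $g$ and $h$ are rational Thurston maps with marking $A$, and the postcritical set of a composition is contained in the union of the postcritical sets of its factors, so the required inclusions $f(C_f) \subset A$ and $f(A) \subset A$ hold for both $h \circ g$ and $g \circ h$. Applying Proposition \ref{pullbackcomp} and using the hypothesis $\sigma_h = \mathrm{id}_{\calT_A}$ then gives
\[
\sigma_{h\circ g} \;=\; \sigma_g \circ \sigma_h \;=\; \sigma_g, \qquad \sigma_{g\circ h} \;=\; \sigma_h \circ \sigma_g \;=\; \sigma_g.
\]
Iterating the same composition law shows that $\sigma_{(h\circ g)^n} = \sigma_g^n = \sigma_{(g\circ h)^n}$ for every positive integer $n$.

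Second, I would invoke the fact that, for $|A|=4$, the pullback relation on isotopy classes of essential Jordan curves in $(S^2,A)$ is completely encoded by the action of $\sigma_f$ on the rational cusps $\hQ \subset \overline{\calT}_A$ (so Conjecture \ref{gcateich} is equivalent to the global curve attractor conjecture in this range). Since the iterated pullback maps for $h\circ g$ and $g\circ h$ coincide with those of $g$, their orbits on $\hQ$ coincide with the $\sigma_g$-orbits on $\hQ$. Consequently, a finite cusp attractor for $\sigma_g$ (which exists by hypothesis) is simultaneously a finite cusp attractor for $\sigma_{h\circ g}$ and $\sigma_{g\circ h}$, and translates back to finite global curve attractors for the two compositions.

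There is no serious obstacle in this argument; it is essentially a two-line manipulation of the composition rule, followed by the standard curve/cusp dictionary. The only point requiring minor care is the admissibility bookkeeping for Proposition \ref{pullbackcomp}, which amounts to checking the postcritical containments described above and noting that $\sigma_h$ is unambiguously defined on $\calT_A$ even when $h$ is not a Thurston map in the strict sense (as allowed by the second remark following the definition of $\sigma_f$).
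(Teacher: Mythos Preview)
Your proposal is correct and follows essentially the same approach as the paper: apply the composition law $\sigma_{h\circ g}=\sigma_g\circ\sigma_h=\sigma_g$ (and likewise for the other order), then invoke the curve/cusp dictionary to transfer the finite attractor for $\sigma_g$. Your write-up simply makes explicit the admissibility bookkeeping and the iteration identity that the paper leaves implicit.
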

\begin{proof}
Let $f=h\circ g$. By the pullback composition law, $\sigma_{f} = \sigma_g\circ\sigma_h = \sigma_g$. Since $\sigma_g$ has a finite cusp attractor, the claim follows. The proof for the other composition order is analogous.
\end{proof}

There are two important classes of marked rational maps $(h,A)$ which are known to satisfy $\sigma_h=\id$. Namely, flexible Latt\`{e}s maps and M\"{o}bius transformations. Let us consider examples for both cases.

\begin{exmp}

For $k\in \hC\setminus \{0,1,-1,\infty\}$, the family
\[h_k(z) = \frac{4z(1-z)(1-k^2z)}{(1-k^2z^2)^2}\]
consists of flexible Latt\`{e}s  maps (see \cite[Section 3.6]{bmexpanding}). The postcritical set is $P_{h_k} = \{0,1,\infty, k^{-2}\}$, so for $k=\sqrt{5}$ we obtain a marked rational map $h:= h_{\sqrt{5}}$ with $P_h = A = \{0,1,\infty,a\}$ and $a=1/5$. If we also use $a=1/5$ in the marking for the map $g$ in Example \ref{cubicexample}, then the resulting composition $f = h\circ g$ is a rational Thurston map with postcritical set $P_f = A$. By Proposition \ref{compattractor}, the map $(f,P_f)$ has a finite global curve attractor.
\end{exmp}

\begin{exmp}\label{twistexmp}
Consider again the quadratic map $g(z)=(1-2z)^2$ which we previously saw in Example \ref{quadoriginalexmp}. It had the following dynamical portrait:

\[\begin{tikzcd}
	{\frac{1}{2}} & 0 & 1\ar[loop, out=-30, in=30, distance=20] && \infty\ar[loop, out=-30, in=30, distance=20, "2:1"']
	\arrow["{2:1}", from=1-1, to=1-2]
	\arrow[from=1-2, to=1-3]
\end{tikzcd}\]
We see $P_g = \{0,1,\infty\}$. There is an additional non-postcritical fixed point at $a=1/4$, so for $A =\{0,1,\infty,a\}$ the map rational Thurston map $(g,A)$ satisfies the main theorem.

Now consider the unique M\"{o}bius transformation $M_{a,0}$ which permutes the set $A$ as $(a\,\,0)(1\,\,\infty)$. In coordinates, this is given by
\[M_{a,0} = \frac{z-a}{z-1}.\]

The twisted map $f = M_{a,0}\circ g$ (for $a=1/4$) is a rational map given by
\[f(z) = \frac{16z^2-16z+3}{4(z-1)}.\]
It has dynamical portrait

\[\begin{tikzcd}
	{\frac{1}{2}} & \frac{1}{4} & 0 & \infty & 1.
	\arrow["{2:1}", from=1-1, to=1-2]
	\arrow[from=1-2, to=1-3]
	\arrow[from=1-3, to=1-4]
	\arrow[bend left,"{2:1}", from=1-4, to=1-5]
	\arrow[bend left, from=1-5, to=1-4]
\end{tikzcd}\]
Again, all points of the marking are postcritical points for $f$, so $(f,P_f)$ has a finite global curve attractor.
\end{exmp}

The author is not aware of a convenient sufficient criterion that determines whether a map is obtained by twisting a $(g,A)$ satisfying the main theorem by a flexible Latt\`{e}s map. There is, however, such a criterion for (post) twists by M\"{o}bius transformations. The result is due to Kelsey and Lodge \cite{kelseylodgequadratic}. Before describing their condition we must introduce some new terminology.

\begin{defn}
A marked rational map $(f,A)$ with $|A|=4$ is {\em statically reducible} if its static portrait has a component consisting of a single directed edge between points of $A$ with degree labeling $1$. Equivalently, $f$ is statically reducible if for some $a\in A$ we have
\[f^{-1}(f(a))\cap C_f=\emptyset\quad\text{and}\quad f^{-1}(f(a))\cap A=\{a\}.\]
If the above condition holds we call the point $a$ {\em statically trivial}.
\end{defn}

\begin{exmp}
Consider the map $(f,P_f)$ from the previous example \ref{twistexmp}, which we recall was given by twisting a map with fewer postcritical points by a M\"{o}bius transformation. The static portrait for the map $(f,P_f)$ is shown below:

\[\begin{tikzcd}
	0 & 1 & \infty & {\frac{1}{2}} & {\frac{1}{4}} \\
	\\
	\infty & 1 & {\frac{1}{4}} & 0
	\arrow[from=1-1, to=3-1]
	\arrow["2"', from=1-4, to=3-3]
	\arrow[from=1-5, to=3-4]
	\arrow["2"', from=1-3, to=3-2]
	\arrow[from=1-2, to=3-1]
\end{tikzcd}\]
The component $1/4\to 0$ in the graph shows that $f$ is statically reducible and $a=1/4$ is a statically trivial postcritical point.
\end{exmp}

The situation above is not a coincidence. It turns out static reducibility exactly detects when a map is obtained by M\"{o}bius twisting a map with fewer postcritical points. This leads us to the statement and proof of Kelsey and Lodge's criterion, which is \cite[Proposition 2.5]{kelseylodgequadratic}.

\begin{prop}\label{reduciblecriterion}
Suppose $(f,A)$ is a statically reducible marked rational map with $|A|=4$. Then there is a M\"{o}bius transformation $M$ with $M(A)=A$ and a rational map $(g,A)$ with $P_g \subset \{0,1,\infty\}$ such that $f=M\circ g$.
\end{prop}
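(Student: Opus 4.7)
The plan is to construct a Möbius transformation $M$ preserving $A$ so that composing $f$ with $M^{-1}$ ``removes'' the statically trivial postcritical point from the orbit data. By Möbius $3$-transitivity I may normalize so that $A = \{0,1,\infty,a\}$ with $a$ the statically trivial point, and I set $b := f(a) \in A$. The two consequences of static triviality that I will use are
\[
f^{-1}(b) \cap C_f = \emptyset \quad\text{and}\quad f^{-1}(b) \cap A = \{a\}.
\]

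Next I would produce a Möbius transformation $M$ with $M(A) = A$ and $M(a) = b$. If $b = a$, take $M = \id$. If $b \neq a$, I use the classical fact that the subgroup of $\Aut(\hC)$ permuting a given four-point set is the Klein four-group, whose three nontrivial elements are the double transpositions of $A$; take $M$ to be the unique such element realizing the swap $a \leftrightarrow b$. Set $g := M^{-1}\circ f$. Because $M$ is a conformal automorphism, $C_g = C_f$ and $g(A) = M^{-1}(f(A)) \subset A$, so $g$ is a rational Thurston map with $P_g \subset A$ and $\deg g = \deg f \geq 2$.

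The remaining point is to show $a \notin P_g$, which gives $P_g \subset \{0,1,\infty\}$. I would argue by minimality: suppose $a \in P_g$ and let $n \geq 1$ be smallest with $a = g^n(c)$ for some $c \in C_g = C_f$. Using $g = M^{-1}\circ f$ and $M(a) = b$ (with the convention $g^0 := \id$), applying $M$ to the equation $a = g(g^{n-1}(c))$ yields
\[
b \;=\; M(a) \;=\; (M \circ g)(g^{n-1}(c)) \;=\; f(g^{n-1}(c)).
\]
If $n = 1$, then $f(c) = b$ with $c \in C_f$, contradicting $f^{-1}(b) \cap C_f = \emptyset$. If $n \geq 2$, then $z := g^{n-1}(c) \in P_g \subset A$ satisfies $f(z) = b$, so $z \in f^{-1}(b) \cap A = \{a\}$; but then $a = g^{n-1}(c)$, contradicting the minimality of $n$.

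I expect the only real obstacle to be verifying that a Möbius transformation $M$ with the required action on $A$ exists in the case $b \neq a$; this is immediate from the Klein four-group description but must be noted explicitly. Everything else is a short bookkeeping argument in which each of the two static-triviality conditions is used exactly once (for the $n = 1$ base case and the $n \geq 2$ reduction respectively), and no Teichmüller- or curve-level machinery from earlier in the paper is needed — the statement is essentially a direct combinatorial unpacking of the static portrait.
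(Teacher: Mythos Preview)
Your proposal is correct and follows essentially the same route as the paper: both normalize so that $a$ is the statically trivial point, take $M$ to be the double-transposition M\"obius involution swapping $a\leftrightarrow f(a)$, and set $g=M^{-1}\circ f$ (the paper writes $g=M\circ f$ and then notes $M=M^{-1}$). The only difference is in presentation: the paper dispatches the claim $a\notin P_g$ in one sentence by observing that the static portrait of $g$ is obtained from that of $f$ by permuting target nodes, whereas you unpack this into an explicit minimal-$n$ argument using the two static-triviality conditions---but the underlying content is identical.
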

\begin{proof}
Suppose $(f,A)$ has a statically trivial point in the marking, which we may assume is $a$. If $f(a)=a$ there is nothing to do, since we may put $M=\id$ and $g=f$. The point $a$ cannot be postcritical because no critical points map to $f(a)=a$, nor do any points of $A\setminus\{a\}$.

If $f(a)\neq a$, then there is a permutation which transposes these two points of the marking and also transposes the two points of $A\setminus\{f(a),a\}$. There is a unique M\"{o}bius transformation $M$ which induces this permutation on $A$, and we may put $g:=M\circ f$. The static portrait of this map is obtained from the static portrait of $f$ by applying the permutation $M$ to the target nodes and shifting the edges accordingly. In particular, $g(a)=a$ and $a$ is not a postcritical point of $g$, so $P_g\subset A\setminus \{a\}$. Since $M^2$ is a M\"{o}bius transformation fixing 4 points it must be the identity, so $M=M^{-1}$ and we may write $f=M\circ g$. This completes the proof.
\end{proof}

We now recall and prove our other main result described at the beginning of the paper.

\resmainthmtwo* 

\begin{proof}
All the work has already been done. By Proposition \ref{reduciblecriterion} we may write $f=M\circ g$ where $g$ satisfies Theorem \ref{mainthm}, and then we conclude by applying Proposition \ref{compattractor}.
\end{proof}

We remark that by further pre- or post-composing a rational Thurston map satisfying the above theorem with flexible Latt\`{e}s maps one can generate examples which have finite global curve attractors by construction, but which are no longer detected by the statically trivial postcritical point condition.

\section{Further Discussion}\label{futuresection}

In this section we discuss how the results of this paper may be extended even further. To begin, we review some known facts about the general correspondence on moduli space.

For a general marked Thurston map $(f,A)$ with $|A|\geq 3$, Koch \cite{SarahCritEndo} proved that $\sigma_f$ covers a finite algebraic correspondence\footnote{In fact, Koch proved the correspondence in the more general nondynamical setting of admissible covers $f\from(S^2,A)\to(S^2,B)$ where $A$ and $B$ are potentially different, but we shall restrict our attention to the dynamical case.}. In particular, we have the following commutative diagram:

\[\begin{tikzcd}
	{\calT_A} && {\calT_A} \\
	& {\calW_f} \\
	{\calM_A} && {\calM_A.}
	\arrow["{\sigma_f}"', from=1-3, to=1-1]
	\arrow["{\pi_A}"', from=1-1, to=3-1]
	\arrow["{\pi_A}", from=1-3, to=3-3]
	\arrow["{\omega_f}"', from=1-3, to=2-2]
	\arrow["Y", from=2-2, to=3-3]
	\arrow["X"', from=2-2, to=3-1]
\end{tikzcd}\]
The map $Y$ is a finite holomorphic covering map, $X$ is holomorphic, $\calW_f = \calT_A/H_f$ is a complex manifold of dimension $|A|-3$ obtained as the quotient of Teichm\"{u}ller space by the group of pure modular liftables $H_f$, and $\omega_f\from\calT_A\to \calW_f$ is the natural quotient map. This diagram also extends to the Weil-Petersson (WP) completion of Teichm\"{u}ller space in the obvious way. See \cite{selthesis} and \cite{KPSlifts} for more details regarding this point.

In general, the map $X$ is not always injective, so there is not always a map on moduli space going in the direction opposite $\sigma_f$ (as occurred in our setting in this paper). We write $X\circ Y^{-1}\from\calM_A\dto\calM_A$ to denote the multivalued map on moduli space going in the same direction as $\sigma_f$.

Before stating the theorem of Koch, Pilgrim, and Selinger, we must give a few more definitions. A nonempty subset $K\subset\calM_A$ is {\em invariant} under the correspondence $X\circ Y^{-1}$ if $X\circ Y^{-1}(K)\subset K$.

A length metric $l$ on $\calM_A$ is called {\em WP-like} if its lift $\tl$ on $\calT_A$ has the property that the identity map defines a homeomorphism between the completions of $\calT_A$ with respect to $\tl$ and with respect to the WP metric. The pullback correspondence $X\circ Y^{-1}$ on $\calM_A$ is {\em uniformly contracting} with respect to a WP-like metric $l$ on $\calM_A$ if there is a constant $0\leq \alpha<1$ such that for each curve $\gamma\from [0,1]\to\calM_A$ with finite length, and each lift $\tgamma$ of $\gamma$ under $Y$, we have $l(X\circ \tgamma)\leq \alpha l(\gamma)$.

\begin{thm}[\protect{\cite[Theorem 7.2]{KPSlifts}}]
Suppose $(f,A)$ is a rational Thurston map with hyperbolic orbifold.
\begin{enumerate}[leftmargin=2em, label={\rm (\roman*)}]
\item If the correspondence $X\circ Y^{-1}\from\calM_A\dto\calM_A$ has a nonempty compact invariant subset, then the virtual endomorphism $\phi_f$ is contracting, and $(f,A)$ has a finite global curve attractor.
\item If the correspondence $X\circ Y^{-1}\from \calM_A\to\calM_A$ is uniformly contracting with respect to a WP-like length metric $l$, then under iterated pullback every curve eventually becomes peripheral.
\end{enumerate}
\end{thm}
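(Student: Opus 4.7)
The plan is to exploit the commutative diagram relating $\sigma_f$ on Teichm\"{u}ller space to the correspondence $X \circ Y^{-1}$ on moduli space, and to pull contractivity/compactness information back from $\calM_A$ into $\calT_A$ via the Weil--Petersson completion $\overline{\calT}_A$. Under the hyperbolic orbifold hypothesis, $\sigma_f$ has a unique fixed point $\tau^{\ast} \in \calT_A$, and this will serve as the basepoint for a leashing/contraction argument of the kind developed in Section \ref{gcasection}.

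For part (i), my first step is to lift the nonempty compact invariant set $K \subset \calM_A$ to $\widetilde{K} := \pi_A^{-1}(K) \subset \calT_A$ and verify that $\widetilde{K}$ is closed, $\sigma_f$-invariant, and at positive WP-distance from $\overline{\calT}_A \setminus \calT_A$. The latter is the key technical point: compactness of $K$ in $\calM_A$ should keep $\widetilde{K}$ uniformly away from all rational cusps, which one checks by relating the WP-completion of $\calT_A$ to a suitable partial compactification of $\calM_A$. Next I would argue that $\sigma_f$ is uniformly contracting on a neighborhood of $\widetilde{K}$; this should follow from the fact that $\sigma_f$ is non-expanding in the Kobayashi metric and strictly contracting on relatively compact subsets, with uniformity supplied by compactness of $K$. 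Identifying $\phi_f$ with $\sigma_f$ restricted to an appropriate subgroup of pure modular liftables, this uniform contraction translates into $\phi_f$ being contracting in Nekrashevych's sense.

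To pass from $\phi_f$ contracting to a finite global curve attractor, my plan is to run the leashing argument in the present more general setting. For each rational cusp $r$, attach a WP-geodesic leash from $\tau^{\ast}$ to $r$ and iterate $\sigma_f$; uniform contraction near $\widetilde{K}$ eventually bounds the leash length by a constant depending only on $f$. Because $\widetilde{K}$ is separated from the cusps, only finitely many cusps can lie within that radius of $\tau^{\ast}$ in the WP completion, and these form the desired attractor. The hard part here is that $\sigma_f$ need not contract everywhere on $\calT_A$, only near $\widetilde{K}$, so one must first show that every cusp orbit eventually enters the contraction zone; this is where compact invariance of $K$ is crucial, since it ensures that $\sigma_f^n$-orbits of arbitrary cusps are eventually trapped in a neighborhood of $\widetilde{K}$.

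For part (ii), uniform contraction of $X \circ Y^{-1}$ with respect to the WP-like length metric $l$ on $\calM_A$ lifts directly to uniform contraction of $\sigma_f$ on $\calT_A$ with respect to the lifted length metric $\tl$. The WP-like condition guarantees that the $\tl$-completion and the WP-completion of $\calT_A$ agree, so every rational cusp $r$ sits at finite $\tl$-distance from $\tau^{\ast}$. Iterating the contraction gives $\tl(\sigma_f^n(r), \tau^{\ast}) \leq \alpha^n \tl(r, \tau^{\ast}) \to 0$, and once $\sigma_f^n(r)$ enters a sufficiently small neighborhood of $\tau^{\ast}$ it must lie in the interior $\calT_A$, which by the correspondence between cusp dynamics and curve-pullback dynamics means that the associated curve has become peripheral. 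The main obstacle across both parts is rigorously establishing the compatibility between pulled-back metrics on $\calT_A$ and the genuine WP-metric, particularly in the higher-dimensional setting where $|A| > 4$ and the pullback on curves is no longer captured by a single $\hQ$-indexed boundary.
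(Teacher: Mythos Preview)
This theorem is \emph{not} proved in the present paper: it is quoted verbatim from \cite[Theorem 7.2]{KPSlifts} in Section~\ref{futuresection} purely for context, with no proof or sketch given here. So there is no ``paper's own proof'' to compare against. That said, let me comment on the soundness of your plan.

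Your approach to part~(ii) is essentially correct and matches the spirit of how such results are proved: lift the WP-like metric to $\calT_A$, observe that $\sigma_f$ is uniformly $\alpha$-contracting in $\tl$, and use that every boundary stratum of the WP-completion lies at finite $\tl$-distance from $\tau^\ast$ to force $\sigma_f^n(r)\to\tau^\ast$, hence eventually into the interior.

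Part~(i), however, has a genuine gap. You claim that compact invariance of $K$ under $X\circ Y^{-1}$ ``ensures that $\sigma_f^n$-orbits of arbitrary cusps are eventually trapped in a neighborhood of $\widetilde K$.'' This does not follow. Invariance of $K$ means only that $X\circ Y^{-1}(K)\subset K$, equivalently $\sigma_f(\widetilde K)\subset\widetilde K$; it says nothing about orbits that begin \emph{outside} $\widetilde K$, and in particular gives no mechanism to pull cusp orbits toward $\widetilde K$. Without this, your leashing argument never gets started, since contraction is only available near $\widetilde K$. The actual argument in \cite{KPSlifts} does not proceed via leashing at all: the compact invariant set is used to show that the biset associated to $f$ is contracting in Nekrashevych's group-theoretic sense (finite nucleus), and the finite curve attractor is then extracted from the nucleus by separate combinatorial reasoning. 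Your identification of $\phi_f$ with ``$\sigma_f$ restricted to an appropriate subgroup'' is also too vague to carry weight; the virtual endomorphism is a partial homomorphism on the mapping class group (or fundamental group), and ``contracting'' there is a statement about word-length asymptotics, not about metric contraction of $\sigma_f$ on $\calT_A$.
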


It is not hard to see how to obtain the extremal cases of Theorem \ref{mainthm} from here. Suppose $(f,A)$ satisfying the hypotheses of our main theorem is hyperbolic, meaning $B = P_\calF = \{0,1,\infty\}$. Since the ends $B$ of moduli space $\calM_A = \hC\setminus B$ all lie in superattracting cycles, deleting suitably chosen punctured disks centered at each end yields a compact set that is invariant under any branch of $f^{-1} = X\circ Y^{-1}$, and so part (i) applies. On the other hand, if $(f,A)$ is expanding, then $B = P_\calJ = \{0,1,\infty\}$, and the orbifold metric constructed in Section \ref{orbifoldsubsec} is a uniformly contracting WP-like metric and part (ii) applies.

Interestingly, the techniques used in the proof of the above theorem differ for parts (i) and (ii); part (i) uses the virtual endomorphism while part (ii) uses a specially designed metric lifted to Teichm\"{u}ller space. For us, using a common mechanism was crucial for the mixed cases (where some postcritical points were in the Fatou set and some were in the Julia set), since it allowed us to deal with attracting and repelling dynamics simultaneously. Perhaps translating the virtual endomorphism argument into a leashing argument similar to the one used in this paper will help push the theorem further.

There are other problems with this idea. One of the nice properties of our setting is that the ends of moduli space are well-sorted in the sense that repelling ends match with repelling ends under the correspondence, and likewise attracting ends match with attracting ends (this is another way of interpreting Lemma \ref{cuspsorting}). This sorting does not seem to occur in the general case, but perhaps it does for a sufficiently high iterate of the Thurston map. More investigation is required.
\bibliography{references}

\end{document}